\newcommand{\tab}{\frac{\beta}{\tau}} 
\newcommand{\tabp}{\frac{\beta\ppr}{\tau\ppr}} 
\newtheorem{thm}{Theorem}[section]
\newtheorem{cor}[thm]{Corollary}
\newtheorem{prop}[thm]{Proposition}
\theoremstyle{definition}
\newtheorem{dfn}[thm]{Definition}
\newtheorem{ex}[thm]{Example}
\newtheorem{lem}[thm]{Lemma}
\newtheorem{fact}[thm]{Fact}
\theoremstyle{remark}
\newtheorem{rem}[thm]{Remark}
\newcommand{\Ob}{\mathrm{Ob}}         
\newcommand{\id}{\mathrm{id}}         
\newcommand{\Id}{\mathrm{Id}}         
\newcommand{\ppr}{^{\prime}}          
\newcommand{\pprr}{^{\prime\prime}}   
\newcommand{\Hom}{\mathrm{Hom}}       
\newcommand{\pro}{\mathrm{pr}}        
\newcommand{\op}{\mathrm{op}}         
\newcommand{\cl}{\mathfrak{C}}        
\newcommand{\fa}{\forall}             
\newcommand{\am}{\amalg}              
\newcommand{\co}{\colon}              
\newcommand{\ci}{\circ}               
\newcommand{\iv}{^{-1}}               
\newcommand{\ot}{\otimes}             
\newcommand{\otd}{\underset{d}{\ot}}             
\newcommand{\se}{\subseteq}           
\newcommand{\ti}{\times}              
\newcommand{\uas}{^{\ast}}            
\newcommand{\sas}{_{\ast}}            
\newcommand{\ems}{\emptyset}          
\newcommand{\Mon}{\mathit{Mon}}       
\newcommand{\Sett}{\mathit{Set}}      
\newcommand{\sett}{\mathit{set}}      
\newcommand{\Ab}{\mathit{Ab}}         
\newcommand{\Mod}{\mathit{Mod}}       
\newcommand{\Fun}{\mathit{Fun}}       
\newcommand{\kMod}{k\Mod}             
\newcommand{\CAT}{\mathrm{CAT}}       
\newcommand{\Cat}{\mathrm{Cat}}       
\newcommand{\lla}{\longleftarrow}     
\newcommand{\lra}{\longrightarrow}    
\newcommand{\rta}{\rightharpoonup}    
\newcommand{\lta}{\leftharpoonup}     
\newcommand{\tc}{\Rightarrow}         
\newcommand{\LR}{\Leftrightarrow}     
\newcommand{\Aut}{\mathrm{Aut}}       
\newcommand{\al}{\alpha}         
\newcommand{\be}{\beta}          
\newcommand{\lam}{\lambda}       
\newcommand{\ups}{\upsilon}      
\newcommand{\ep}{\varepsilon}    
\newcommand{\thh}{\theta}        
\newcommand{\om}{\omega}         
\newcommand{\vp}{\varphi}        
\newcommand{\Lam}{\Lambda}       
\newcommand{\Gam}{\Gamma}        
\newcommand{\Om}{\Omega}         
\newcommand{\Asc}{\mathscr{A}}  
\newcommand{\Csc}{\mathscr{C}}  
\newcommand{\Isc}{\mathscr{I}}  
\newcommand{\Ksc}{\mathscr{K}}  
\newcommand{\C}{\Csc}           
\newcommand{\A}{\Asc}           
\newcommand{\M}{\mathscr{M}^k}    
\newcommand{\HH}{\mathscr{H}}   
\newcommand{\Cbb}{\mathbb{C}}   
\newcommand{\Kbb}{\mathbb{K}}   
\newcommand{\Sbb}{\mathbb{S}}   
\newcommand{\Zbb}{\mathbb{Z}}   
\newcommand{\Dbf}{\mathbf{D}}   
\newcommand{\Dbb}{\mathbb{D}}   
\newcommand{\Rbf}{\mathbf{R}}   
\newcommand{\Tbf}{\mathbf{T}}   
\newcommand{\ebf}{\mathbf{e}}   
\newcommand{\ibf}{\mathbf{i}}   
\newcommand{\pbf}{\mathbf{p}}   
\newcommand{\Bcal}{\mathcal{B}} 
\newcommand{\Fcal}{\mathcal{F}} 
\newcommand{\Gcal}{\mathcal{G}} 
\newcommand{\Hcal}{\mathcal{H}} 
\newcommand{\Ical}{\mathcal{I}} 
\newcommand{\Jcal}{\mathcal{J}} 
\newcommand{\Kcal}{\mathcal{K}} 
\newcommand{\Lcal}{\mathcal{L}} 
\newcommand{\Scal}{\mathcal{S}} 
\newcommand{\Ycal}{\mathcal{Y}} 
\newcommand{\afr}{\mathfrak{a}} 
\newcommand{\bfr}{\mathfrak{b}} 
\newcommand{\sfr}{\mathfrak{s}} 
\newcommand{\Afr}{\mathfrak{A}} 
\newcommand{\wt}{\widetilde}    
\newcommand{\und}{\underline}   
\newcommand{\ov}{\overset}      
\newcommand{\un}{\underset}     
\newcommand{\Ind}{\mathrm{Ind}}    
\newcommand{\Gs}{{}_G\mathit{set}}           
\newcommand{\Hs}{{}_H\mathit{set}}           
\newcommand{\el}{\mathbb{E}}
\newcommand{\iog}{\iota^{(G)}}
\newcommand{\ioh}{\iota^{(H)}}
\newcommand{\prg}{\pro^{(G)}}
\newcommand{\prh}{\pro^{(H)}}
\newcommand{\ax}{\al(x)}        
\newcommand{\wlS}{\frac{W_S}{L_S}}
\newcommand{\wlT}{\frac{W_T}{L_T}}
\newcommand{\spSu}{\yh\ov{\und{\be}_S}{\lla}\wlS\ov{\und{\al}_S}{\lra}\xg}
\newcommand{\spTu}{\yh\ov{\und{\be}_T}{\lla}\wlT\ov{\und{\al}_T}{\lra}\xg}
\newcommand{\spStu}{\und{\be}_S,\wlS,\und{\al}_S}
\newcommand{\pt}{\mathbf{1}}                     
\newcommand{\ttt}{\mathbf{t}}                    
\newcommand{\iii}{\mathfrak{i}}                  
\newcommand{\ppp}{\mathfrak{p}}                  
\newcommand{\dfl}{\mathrm{dfl}}                  
\newcommand{\nwp}{\mathrm{nwp}}                  
\newcommand{\SIm}{\mathrm{SIm}}                  
\newcommand{\Obig}{\Omega_{\mathrm{big}}}        
\newcommand{\Abig}{\Afr_{\mathrm{big}}}          
\newcommand{\as}{\mathrm{as}}                    
\newcommand{\sym}{\mathrm{sym}}                  
\newcommand{\ac}{\mathrm{ac}}                    
\newcommand{\Omk}{\Om^k}                         
\newcommand{\Obigk}{\Omk_{\mathrm{big}}}         
\newcommand{\SMack}{\mathit{SMack}}           
\newcommand{\Mack}{\mathit{Mack}}             
\newcommand{\SMackS}{\SMack(\Sbb)}            
\newcommand{\SMackC}{\SMack(\C)}              
\newcommand{\MackSk}{\Mack^k(\Sbb)}           
\newcommand{\MackdSk}{\Mack_{\dfl}^k(\Sbb)}   
\newcommand{\MackCk}{\Mack^k(\C)}             
\newcommand{\MackdCk}{\Mack_{\dfl}^k(\C)}     
\newcommand{\finGpd}{\mathrm{finGpd}}         
\newcommand{\finCat}{\mathrm{finCat}}         
\newcommand{\BFk}{\Fcal_{\Bcal}^k}            
\newcommand{\Green}{\mathit{Green}}           
\newcommand{\GreenCk}{\Green(\C)}             
\newcommand{\GreendCk}{\Green_{\dfl}^k(\C)}   
\newcommand{\Add}{\mathit{Add}}               
\newcommand{\Sp}{\mathrm{Sp}}                 
\newcommand{\OMod}{\Omk\Mod}                  
\newcommand{\Cmod}{\Cbb\mathrm{mod}}          
\newcommand{\xg}{\frac{X}{G}}
\newcommand{\yh}{\frac{Y}{H}}
\newcommand{\yg}{\frac{Y}{G}}
\newcommand{\zk}{\frac{Z}{K}}
\newcommand{\ak}{\frac{A}{K}}
\newcommand{\akp}{\frac{A\ppr}{K\ppr}}
\newcommand{\bl}{\frac{B}{L}}
\newcommand{\blp}{\frac{B\ppr}{L\ppr}}
\newcommand{\wl}{\frac{W}{L}}
\newcommand{\pte}{\frac{\pt}{e}}
\newcommand{\ptg}{\frac{\pt}{G}}
\newcommand{\pth}{\frac{\pt}{H}}
\newcommand{\ptk}{\frac{\pt}{K}}
\newcommand{\althh}{\frac{\alpha}{\theta}}
\newcommand{\althhp}{\frac{\alpha\ppr}{\theta\ppr}}
\newcommand{\bet}{\frac{\beta}{\tau}}
\newcommand{\xgixg}{\xg\ov{\id}{\to}\xg}
\newcommand{\akaxgu}{\ak\ov{\und{\afr}}{\to}\xg}
\newcommand{\akaxgpu}{\akp\ov{\und{\afr\ppr}}{\to}\xg}
\newcommand{\blbyhu}{\bl\ov{\und{\bfr}}{\to}\yh}
\numberwithin{equation}{section}
\begin{document}

\title[Biset functors as module Mackey functors]{Biset functors as module Mackey functors, and its relation to derivators.}

\author{Hiroyuki NAKAOKA}
\address{Research and Education Assembly, Science and Engineering Area, Research Field in Science, Kagoshima University, 1-21-35 Korimoto, Kagoshima, 890-0065 Japan\ /\ LAMFA, Universit\'{e} de Picardie-Jules Verne, 33 rue St Leu, 80039 Amiens Cedex1, France}

\email{nakaoka@sci.kagoshima-u.ac.jp}

\thanks{The author wishes to thank Professor Serge Bouc for his comments and advices}
\thanks{The author wishes to thank Professor Ivo Dell'Ambrogio, for his interest and stimulating discussions}

\thanks{This work is supported by JSPS KAKENHI Grant Numbers 25800022,\, 24540085.}

\begin{abstract}
In this article, we will show that the category of biset functors can be regarded as a reflective monoidal subcategory of the category of Mackey functors on the 2-category of finite groupoids. This reflective subcategory is equivalent to the category of modules over the Burnside functor.
As a consequence of the reflectivity, we can associate a biset functor to any derivator on the 2-category of finite categories.
\end{abstract}

\maketitle


\section{Introduction and Preliminaries}

A Mackey functor is a useful tool to describe how an algebraic system associated to finite groups (such as the Burnside rings or the representation rings, or the cohomology groups, etc.) behaves under the change of subgroups of a fixed group $G$.
Recently, Bouc \cite{Bouc_biset} has defined the notion of a {\it biset functor}, which moreover enables us to deal with the behavior of algebraic systems named as above, with respect to {\it bisets} among {\it all} finite groups.
The category of biset functors $\BFk$ becomes a symmetric closed monoidal category (\cite[II.8]{Bouc_biset}), and its monoid objects called {\it Green biset functors}, are also investigated in the literature \cite{Bouc_biset},\cite{Romero}.

As shown in \cite{N_BisetMackey}, a biset functor can be regarded as a special class of Mackey functor on some 2-category $\Sbb$. This special Mackey functors are called {\it deflative} Mackey functors there. Indeed, the category of biset functors $\BFk$ has shown to be equivalent to the category $\MackdSk$ of deflative Mackey functors on $\Sbb$. Relations between (global) Mackey functors and biset functors are also investigated by several researchers such as \cite{Bouc_fused}, \cite{Coskun}, \cite{HTW}, \cite{Ibarra}.

We can also show that the 2-category $\Sbb$ becomes biequivalent to the 2-category of finite groupoids. It allows us to relate $\Mack(\Sbb)$ to {\it derivators}. In fact, to any derivator on the 2-category of finite categories, we can associate a Mackey functor on $\Sbb$. This gives a way to construct biset functors from derivators (since the inclusion $\BFk\hookrightarrow\MackSk$ is shown to have a left adjoint, as below). We remark also that the theory of derivators is of recent interest by several researchers, as seen in \cite{Cisinski}, \cite{Groth}.

One of the motivations of this article is to pursue this interpretation, to describe Green biset functors in terms of $\MackSk$. By the result of Panchadcharam and Street \cite{PS}, the category $\MackSk$ can be equipped with a symmetric monoidal structure. We will show that this monoidal structure restricts to $\MackdSk$, and the equivalence $\BFk\ov{\simeq}{\lra}\MackdSk$ constructed in \cite{N_BisetMackey} is in fact a monoidal equivalence. As a consequence, their categories of monoids become equivalent. Namely, we obtain an equivalence between the category of Green biset functors and the category of {\it deflative Green functors} on $\Sbb$. Besides, we see that the inclusion $\BFk\hookrightarrow\MackSk$ has a left adjoint, given by the tensor product with a monoid object.

This motivation comes from our ongoing work on {\it Tambara biset functors}. It is expected to be a framework for biset functors equipped with multiplicative inductions, such as the Burnside functor and the representation ring functor. This will be an analog of the notion of {\it Tambara functors} in the ordinary Mackey functor theory. In \cite{N_DerTam} and forthcoming works, we will formalize \lq Tambara' properties for biset functors, using our interpretation as Mackey functors on $\Sbb$. In the analogy with the ordinary case, we expect that the underlying Mackey functor of any Tambara biset functor should naturally become a Green functor on $\Sbb$. In this article, we develop what will be necessary for this purpose.

\bigskip

\bigskip

Throughout this article, $k$ denotes a commutative ring with multiplicative unit. Any group is assumed to be finite. The unit of a group is denoted by $e$. Abbreviately, trivial group is denoted by $e$. For a finite group $G$, let $\Gs$ denote the category of finite $G$-sets, where morphisms are equivariant $G$-maps. The discrete category with only one object is denoted by $\ebf$.
For any category $\Ksc$ and any pair of objects $X$ and $Y$ in $\Ksc$, the set of morphisms from $X$ to $Y$ in $\Ksc$ is denoted by $\Ksc(X,Y)$. Let $\cl(\Ksc)$ denote the set of isomorphism classes of objects in $\Ksc$. For a functor $F\co\Ksc\to\Ksc\ppr$, let $\cl(F)\co\cl(\Ksc)\to\cl(\Ksc\ppr)$ denote the map induced from $F$.

\bigskip

For a strict 2-category $\Kbb$, we use the following notation.
\begin{enumerate}
\item[{\rm (0)}] $\Kbb^0$ denotes the class of 0-cells in $\Kbb$.
\item[{\rm (1)}] For any pair of 0-cells $X,Y\in\Kbb^0$, the set of 1-cells from $X$ to $Y$ is denoted by $\Kbb^1(X,Y)$. Together with the 2-cells among them, they form a category $\Kbb(X,Y)$ satisfying $\Ob(\Kbb(X,Y))=\Kbb^1(X,Y)$.
\item[{\rm (2)}] For any $X,Y\in\Kbb^0$ and any $f,g\in\Kbb^1(X,Y)$, the set of 2-cells from $f$ to $g$ is denoted by $\Kbb^2(f,g)$. Namely, we put $\Kbb^2(f,g)=(\Kbb(X,Y))(f,g)$.
\end{enumerate}
Horizontal composition and vertical composition are denoted by \lq\lq$\ci$" and \lq\lq$\cdot$" respectively.

Let $\Cat$ denote the 2-category of small categories (\cite[Example 7.1.4a]{Borceux}). Let $\finCat\se\Cat$ denote the full 2-subcategory of finite categories (this is an example of {\it category of diagrams} (\cite[Definition 1.12]{Groth})), and let $\finGpd\se\finCat$ denote the full 2-subcategory of finite groupoids.

\bigskip

Let us recall the definition and properties of $\Sbb$ from \cite{N_BisetMackey}. For the generalities of 2-categries and bicategories, see \cite{MacLane},\cite{Borceux},\cite{Leinster}.
\begin{dfn}\label{DefS}(\cite[Definition 2.2.12]{N_BisetMackey})
2-category $\Sbb$ is defined as follows. (See \cite[section 2.2]{N_BisetMackey}.)
\begin{enumerate}
\item[{\rm (0)}] A 0-cell is a pair of a finite group $G$ and a finite $G$-set $X$. We denote this pair by $\xg$.
\item[{\rm (1)}] For any pair of 0-cells $\xg$ and $\yh$, a 1-cell $\althh\co \xg\to\yh$ is a pair of a map $\al\co X\to Y$ and a family of maps $\{\thh_x\co G\to H \}_{x\in X}$ satisfying
\begin{itemize}
\item[{\rm (i)}] $\al(gx)=\thh_x(g)\al(x)$ 
\item[{\rm (ii)}] $\thh_x(gg\ppr)=\thh_{g\ppr x}(g)\thh_x(g\ppr)$
\end{itemize}
for any $x\in X$ and any $g,g\ppr\in G$.
We denote such a pair by $\althh$.

If $\thh$ satisfies $\thh_x=f\ (\fa x\in X)$ for some group homomorphism $f\co G\to H$, then the 1-cell $\althh$ is called $f$-{\it equivariant}, and simply written as $\frac{\al}{f}$.
In particular when $G=H$, a 1-cell $\althh\co\xg\to\yg$ is called $G$-{\it equivariant} or simply {\it equivariant}, if it is $\id_G$-equivariant. In this case, we denote this 1-cell by $\frac{\al}{\id_G}=\frac{\al}{G}$.

For any consecutive pair of 1-cells
\[ \xg\ov{\althh}{\lra}\yh\ov{\tab}{\lra}\zk, \]
we define their composition $(\tab)\ci(\althh)=\frac{\be\ci\al}{\tau\ci\thh}$ by
\begin{itemize}
\item[-] $\be\ci\al\co X\to Z$ is the usual composition of maps of sets,
\item[-] $\tau\ci\thh$ is defined by
\[ (\tau\ci\thh)_x(g)=\tau_{\ax}(\thh_x(g))\quad(\fa g\in G), \]
namely, $(\tau\ci\thh)_x=\tau_{\ax}\ci\thh_x$ for any $x\in X$.
\end{itemize}
The identity 1-cell for $\xg$ is given by $\id_{\xg}=\frac{\id_X}{G}$.

\item[{\rm (2)}] For any pair of 1-cells $\althh,\althhp\co\xg\to\yh$, a 2-cell $\ep\co\althh\tc\althhp$ is a family of elements $\{ \ep_x\in H\}_{x\in X}$ satisfying
\begin{itemize}
\item[{\rm (i)}] $\al\ppr(x)=\ep_x\al(x) $,
\item[{\rm (ii)}] $\ep_{gx}\thh_x(g)\ep_x\iv=\thh\ppr_x(g)$
\end{itemize}
for any $x\in X$ and $g\in G$.

If we are given a consecutive pair of 2-cells
\[
\xy
(-14,0)*+{\xg}="0";
(14,0)*+{\yh}="2";
{\ar@/^2.0pc/^{\althh} "0";"2"};
{\ar|*+{_{\althhp}} "0";"2"};
{\ar@/_2.0pc/_{\frac{\al\pprr}{\thh\pprr}} "0";"2"};
{\ar@{=>}^{\ep} (0,6);(0,3)};
{\ar@{=>}^{\ep\ppr} (0,-3);(0,-6)};
\endxy
\]
then their vertical composition $\ep\ppr\cdot\ep\co \althh\tc \frac{\al\pprr}{\thh\pprr}$ is defined by
\[ (\ep\ppr\cdot\ep)_x=\ep\ppr_x\ep_x\quad(\fa x\in X). \]
The identity 2-cell $\id\co\id_{\althh}\tc\id_{\althh}$ is given by
$\id_x=e\ (\fa x\in X)$. With this definition, any 2-cell becomes invertible with respect to the vertical composition.

Horizontal compositions are given as follows.
Let $\xg\ov{\althh}{\lra}\yh\ov{\tab}{\lra}\zk$ be a sequence of 1-cells.
\begin{enumerate}
\item For a 2-cell
\[
\xy
(-14,0)*+{\xg}="0";
(14,0)*+{\yh}="2";
{\ar@/^1.2pc/^{\althh} "0";"2"};
{\ar@/_1.2pc/_{\althhp} "0";"2"};
{\ar@{=>}^{\ep} (0,2);(0,-2)};
\endxy,
\]
define $(\tab)\ci\ep\co(\tab)\ci(\althh)\tc(\tab)\ci(\althhp)$ by
\begin{equation}\label{EqHor1}
((\tab)\ci\ep)_x=\tau_{\ax}(\ep_x)\quad(\fa x\in X).
\end{equation}
\item For a 2-cell
\[
\xy
(-14,0)*+{\yh}="0";
(14,0)*+{\zk}="2";
{\ar@/^1.2pc/^{\tab} "0";"2"};
{\ar@/_1.2pc/_{\tabp} "0";"2"};
{\ar@{=>}^{\rho} (0,2);(0,-2)};
\endxy,
\]
define $\rho\ci(\althh)\co(\tab)\ci(\althh)\tc(\tabp)\ci(\althh)$ by
\begin{equation}\label{EqHor2}
(\rho\ci(\althh))_x=\rho_{\ax}\quad(\fa x\in X).
\end{equation}
\end{enumerate}
\end{enumerate}
\end{dfn}
\begin{rem}\label{RemAbb}
In the following, a 1-cell $\althh$ is often abbreviately written as $\al$.
\end{rem}

Define a 2-functor $\el\co\Sbb\to\finGpd$ in the following way (Definition \ref{Def0to0}, Propositions \ref{Prop1to1}, \ref{Prop2to2}, Corollary \ref{Cor2Ftr}).

On 0-cells, $\el$ is defined as follows.
\begin{dfn}\label{Def0to0}
For any 0-cell $\xg\in\Sbb^0$, associate the category of elements $\el(\xg)=e\ell_G(X)$ (\cite[Example A.14]{AM}) defined by
\begin{itemize}
\item[-] $\Ob(\el(\xg))=X$.
\item[-] For any $x,x\ppr\in X$, define $(\el(\xg))(x,x\ppr)=\{ g\in G\mid gx=x\ppr\}$.
\end{itemize}
\end{dfn}

On 1-cells, $\el$ is given by the following.
\begin{prop}\label{Prop1to1}
Let $\xg,\yh\in\Sbb^0$ be any pair of 0-cells. There is a bijective map on the set of 1-cells
\[ \el\co\Sbb^1(\xg,\yh)\ov{\cong}{\lra}\finGpd^1(\el(\xg),\el(\yh)), \]
which is given explicitly in the proof.
\end{prop}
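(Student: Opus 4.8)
The plan is to construct the map $\el$ on 1-cells explicitly, then verify it is well-defined (lands in functors between the categories of elements), and finally exhibit an inverse. Given a 1-cell $\althh\co\xg\to\yh$, consisting of a map $\al\co X\to Y$ together with $\{\thh_x\co G\to H\}_{x\in X}$ satisfying conditions (i) and (ii) of Definition \ref{DefS}(1), define a functor $F=\el(\althh)\co\el(\xg)\to\el(\yh)$ by setting $F(x)=\ax$ on objects, and sending a morphism $g\co x\to x\ppr$ in $\el(\xg)$ (i.e.\ $g\in G$ with $gx=x\ppr$) to $\thh_x(g)\co\ax\to\al(x\ppr)$. First I would check this is legitimate: condition (i) gives $\thh_x(g)\ax=\al(gx)=\al(x\ppr)$, so $\thh_x(g)$ really is a morphism $\ax\to\al(x\ppr)$ in $\el(\yh)$.

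Next I would verify functoriality. The identity $e\co x\to x$ must go to $\thh_x(e)=e$; this follows from condition (ii) with $g=g\ppr=e$, which forces $\thh_x(e)=\thh_x(e)\thh_x(e)$ in $H$, hence $\thh_x(e)=e$. For composition, given $g\co x\to x\ppr$ and $g\ppr\co x\ppr\to x\pprr$ (so $gx=x\ppr$, $g\ppr x\ppr=x\pprr$, and the composite in $\el(\xg)$ is $g\ppr g\co x\to x\pprr$), condition (ii) in the form $\thh_x(g\ppr g)=\thh_{gx}(g\ppr)\thh_x(g)=\thh_{x\ppr}(g\ppr)\thh_x(g)$ is exactly the statement that $F$ preserves this composite. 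So $F$ is a well-defined functor between finite groupoids, i.e.\ an element of $\finGpd^1(\el(\xg),\el(\yh))$.

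For the inverse, given a functor $F\co\el(\xg)\to\el(\yh)$, I would recover a 1-cell as follows. On objects $F$ gives a map $\al\co X\to Y$, $\al(x)=F(x)$. For the family $\thh$: note that for each $x\in X$ and $g\in G$ we have a morphism $g\co x\to gx$ in $\el(\xg)$, and $F$ sends it to some morphism $F(g\co x\to gx)\in H$ with $F(g\co x\to gx)\cdot\ax=\al(gx)$; define $\thh_x(g)$ to be this element of $H$. One then checks directly that $\al(gx)=\thh_x(g)\ax$ is condition (i), and that functoriality of $F$ on the composite $g\ppr g\co x\to g\ppr g x$ (factored as $x\ov{g}{\to}gx\ov{g\ppr}{\to}g\ppr gx$) yields $\thh_x(g\ppr g)=\thh_{gx}(g\ppr)\thh_x(g)$, which is condition (ii). The two constructions are visibly mutually inverse: starting from $\althh$, building $F$, and reading off the associated data returns $\al$ unchanged and $\thh_x(g)=F(g\co x\to gx)=\thh_x(g)$; conversely for $F$. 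Hence $\el$ is a bijection on 1-cells.

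I do not expect a genuine obstacle here — the statement is essentially the observation that the data of a 1-cell in $\Sbb$ is a repackaging of the data of a functor between categories of elements. The one point requiring a little care is bookkeeping: keeping the source object $x$ attached to each group element (since $\thh_x(g)$ depends on $x$, not just on $g$ and its image), and correctly matching the two forms of the cocycle condition (ii) under the direction conventions for composition in $\el(\xg)$ and in the composite of 1-cells in $\Sbb$. Once the translation dictionary between "$g\in G$ acting $x\mapsto gx$" and "morphism $g\co x\to gx$ in $\el(\xg)$" is fixed, all verifications reduce to rewriting conditions (i) and (ii).
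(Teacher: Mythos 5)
Your proposal is correct and is essentially the paper's own argument: the paper unpacks a functor $F\co\el(\xg)\to\el(\yh)$ into the data $(\al,\{\thh_x\}_{x\in X})$ via $F_{x,gx}(g)=\thh_x(g)$ and observes that functoriality translates exactly into conditions (i) and (ii), whereas you run the same dictionary in the opposite direction and then exhibit the inverse explicitly. The verifications (including $\thh_x(e)=e$ following automatically from the cocycle condition) match the paper's.
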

\begin{proof}
Any functor $F\co \el(\xg)\to \el(\yh)$ should consist of a pair $(\alpha,\{ F_{x,x\ppr}\}_{x,x\ppr\in X})$ of
\begin{itemize}
\item[{\rm (i)}] a map on the set of objects $\al\co X\to Y$,
\item[{\rm (ii)}] a family of maps
\begin{equation}\label{FamMorphMap}
\{ F_{x,x\ppr}\co(\el(\xg))(x,x\ppr)\to (\el(\yh))(\al(x),\al(x\ppr)) \}
\end{equation}
preserving the identities and the composition.
\end{itemize}

A family of maps $(\ref{FamMorphMap})$ can be regarded as a map
\begin{eqnarray*}
&\Theta\co X\ti G\ov{\cong}{\to}\{(x,x\ppr,g)\in X\ti X\ti G\mid x\ppr=gx \}\to H&\\
&(x,g)\mapsto(x,gx,g)\mapsto F_{x,gx}(g)=:\thh_x(g).&
\end{eqnarray*}
This family preserves the composition if and only if, for any sequence of morphisms $x\ov{g\ppr}{\to}g\ppr x\ov{g}{\to}gg\ppr x$ in $\el(\xg)$, it satisfies
\[  \thh_x(gg\ppr)=\thh_{g\ppr x}(g)\ci \thh_x(g\ppr).\]
This is nothing but the condition for a 1-cell $\althh\co\xg\to\yh$ in $\Sbb$. Preservation of the identities also follows automatically from this equation.
Thus functors $F$ correspond bijectively to 1-cells $\althh$ by the equation
\[ F_{x,gx}(g)=\thh_x(g)\quad(\fa x\in X,g\in G). \]
\end{proof}

On 2-cells, $\el$ is given by the following.
\begin{prop}\label{Prop2to2}
Let $\althh,\bet\co\xg\to\yh$ be any pair of 1-cells in $\Sbb$. There is a bijective map on the set of 2-cells
\[ \el\co\Sbb^2(\althh,\bet)\ov{\cong}{\lra}\finGpd^2(\el(\althh),\el(\bet)), \]
which is given explicitly in the proof.
\end{prop}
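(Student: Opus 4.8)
The plan is to unwind both sides of the claimed bijection and to observe that they carry literally the same data. Since $\finGpd$ is a full $2$-subcategory of $\Cat$, a $2$-cell of $\finGpd$ from $\el(\althh)$ to $\el(\bet)$ is precisely a natural transformation $\eta\co\el(\althh)\tc\el(\bet)$ between the two functors $\el(\xg)\to\el(\yh)$. By Proposition \ref{Prop1to1} these functors are known explicitly: $\el(\althh)$ sends $x\mapsto\al(x)$ and a morphism $g\co x\to gx$ of $\el(\xg)$ to $\thh_x(g)$, while $\el(\bet)$ sends $x\mapsto\be(x)$ and $g\co x\to gx$ to $\tau_x(g)$.

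Next I would describe such an $\eta$ componentwise. For each object $x\in\Ob(\el(\xg))=X$, the component $\eta_x$ is a morphism $\al(x)\to\be(x)$ of $\el(\yh)=e\ell_H(Y)$, that is, an element $\eta_x\in H$ with $\eta_x\al(x)=\be(x)$ by Definition \ref{Def0to0}. Since every morphism of $\el(\xg)$ is of the form $g\co x\to gx$ for some $x\in X$ and $g\in G$, naturality of $\eta$ is the commutativity, for all such $x$ and $g$, of the square with edges $\thh_x(g)\co\al(x)\to\al(gx)$, $\eta_x\co\al(x)\to\be(x)$, $\eta_{gx}\co\al(gx)\to\be(gx)$ and $\tau_x(g)\co\be(x)\to\be(gx)$. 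Reading the composition of $e\ell_H(Y)$ as the appropriate multiplication in $H$, this square becomes $\eta_{gx}\,\thh_x(g)=\tau_x(g)\,\eta_x$, equivalently $\eta_{gx}\,\thh_x(g)\,\eta_x\iv=\tau_x(g)$.

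Comparing with Definition \ref{DefS}\,(2), I would then note that the two conditions just obtained on the family $\{\eta_x\in H\}_{x\in X}$ are exactly conditions (i) and (ii) for a $2$-cell $\ep\co\althh\tc\bet$ of $\Sbb$ (taking $\althhp=\bet$ there, i.e. $\al\ppr=\be$ and $\thh\ppr=\tau$). Hence the assignment $\ep\mapsto\el(\ep)$, where $\el(\ep)$ is the natural transformation with components $\el(\ep)_x=\ep_x$, is a well-defined map $\Sbb^2(\althh,\bet)\to\finGpd^2(\el(\althh),\el(\bet))$; it is manifestly a bijection, with inverse sending a natural transformation $\eta$ to the family $\{\eta_x\}_{x\in X}$.

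The one point I would be careful about is the bookkeeping of the composition conventions in $e\ell_H(Y)$, so that the two legs of the naturality square are identified with $\eta_{gx}\,\thh_x(g)$ and $\tau_x(g)\,\eta_x$ in the right order and condition (ii) is matched on the nose rather than up to an unwanted inversion; once the composition in the category of elements is fixed (as in Proposition \ref{Prop1to1}), nothing else is needed. The compatibility of this bijection with vertical and horizontal composition of $2$-cells, which is what upgrades $\el$ to a genuine $2$-functor, would then be checked separately (cf. Corollary \ref{Cor2Ftr}).
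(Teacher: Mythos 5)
Your proposal is correct and takes essentially the same approach as the paper: both unwind a $2$-cell of $\finGpd$ into its components, identify the naturality square in $e\ell_H(Y)$ with condition (ii) of Definition \ref{DefS}\,(2), and observe that the two sides carry literally the same data. Your write-up is simply a more explicit version of the paper's "Obviously, these conditions are equivalent."
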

\begin{proof}
Remark that each element in both hand sides is given as a family $\ep=\{ \ep_x\in H\}_{x\in X}$ satisfying some conditions.

For the left hand side, the conditions are
\begin{itemize}
\item[{\rm (i)}] $\ep_x\al(x)=\be(x)\ \ (\fa x\in X)$.
\item[{\rm (ii)}] $\ep_{gx}\thh_x(g)=\tau_x(g)\ep_x\ \ (\fa x\in X,g\in G)$.
\end{itemize}

For the right hand side, the conditions are
\begin{itemize}
\item[{\rm (i)}] $\ep_x\in(\el(\yh))(\al(x),\be(x))\ \ (\fa x\in X)$.
\item[{\rm (ii)}] For any $x\in X$ and any $g\in G$ (viewed as a morphism $g\co x\to gx$),
\[
\xy
(-8,6)*+{\al(x)}="0";
(8,6)*+{\be(x)}="2";
(-8,-6)*+{\al(gx)}="4";
(8,-6)*+{\be(gx)}="6";
{\ar^{\ep_x} "0";"2"};
{\ar_{\thh_x(g)} "0";"4"};
{\ar^{\tau_x(g)} "2";"6"};
{\ar_{\ep_{gx}} "4";"6"};
{\ar@{}|\circlearrowright "0";"6"};
\endxy
\]
is commutative.
\end{itemize}
Obviously, these conditions are equivalent.
\end{proof}

\begin{cor}\label{Cor2Ftr}
$\el\co\Sbb\to\finGpd$ is a strict 2-functor $($\cite[Definition 7.2.1]{Borceux}$)$. This induces isomorphism of categories $\el\co\Sbb(\xg,\yh)\to\finGpd(\el(\xg),\el(\yh))$ for any $\xg,\yh\in\Sbb^0$.
\end{cor}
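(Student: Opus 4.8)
The plan is to assemble the pieces already established in Definition \ref{Def0to0} and Propositions \ref{Prop1to1}, \ref{Prop2to2}, and to verify the two functoriality axioms (preservation of identity 1-cells and of horizontal composition), from which everything else follows formally. First I would observe that $\el$ is already defined on all three levels of cells: on $0$-cells by Definition \ref{Def0to0}, on $1$-cells by the bijection of Proposition \ref{Prop1to1}, and on $2$-cells by the bijection of Proposition \ref{Prop2to2}. So the content of the corollary is that these assignments are compatible with the strict $2$-category structures, i.e.\ that $\el$ is a strict $2$-functor in the sense of \cite[Definition 7.2.1]{Borceux}.

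The key computations are: (a) $\el(\id_{\xg})=\id_{\el(\xg)}$; (b) $\el$ respects horizontal composition of $1$-cells, $\el((\tab)\ci(\althh))=\el(\tab)\ci\el(\althh)$; (c) $\el$ respects vertical composition of $2$-cells; and (d) $\el$ respects the two whiskering operations \eqref{EqHor1} and \eqref{EqHor2} (equivalently, interchange). For (a): the identity $1$-cell $\frac{\id_X}{G}$ has $\thh_x=\id_G$ for all $x$, and unwinding Proposition \ref{Prop1to1} this sends $g\co x\to gx$ to $g\co x\to gx$, which is exactly $\id_{\el(\xg)}$. For (b): given $\xg\ov{\althh}{\lra}\yh\ov{\tab}{\lra}\zk$, the composite $1$-cell is $\frac{\be\ci\al}{\tau\ci\thh}$ with $(\tau\ci\thh)_x=\tau_{\ax}\ci\thh_x$; under Proposition \ref{Prop1to1} this sends $g\co x\to gx$ to $\tau_{\ax}(\thh_x(g))$, whereas the composite functor $\el(\tab)\ci\el(\althh)$ sends $g\co x\to gx$ first to $\thh_x(g)\co \al(x)\to\al(gx)$ and then to $\tau_{\al(x)}(\thh_x(g))$; these agree. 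For (c) and (d): since $2$-cells on both sides are literally the same families $\{\ep_x\in H\}_{x\in X}$ (this is how Proposition \ref{Prop2to2} is proved), and vertical composition is $(\ep\ppr\cdot\ep)_x=\ep\ppr_x\ep_x$ on the $\Sbb$ side while on the groupoid side it is composition of arrows in $\el(\yh)$, which is multiplication in $H$, these coincide; the whiskerings \eqref{EqHor1}, \eqref{EqHor2} are checked by the same bookkeeping as in (b), comparing $((\tab)\ci\ep)_x=\tau_{\ax}(\ep_x)$ and $(\rho\ci(\althh))_x=\rho_{\ax}$ against the horizontal composites of natural transformations in $\finGpd$.

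Once (a)--(d) are in place, $\el$ is a strict $2$-functor. For the final sentence of the corollary, the functor $\el\co\Sbb(\xg,\yh)\to\finGpd(\el(\xg),\el(\yh))$ is bijective on objects by Proposition \ref{Prop1to1} and fully faithful by Proposition \ref{Prop2to2}, hence an isomorphism of categories. Honestly, none of the steps is a genuine obstacle: the whole proof is a matter of carefully unwinding the formulas in Definition \ref{DefS} against the explicit bijections in the two preceding propositions. The only point demanding care is keeping the indices straight in step (b)/(d) --- in particular noting that the subscript on $\tau$ in $(\tau\ci\thh)_x$ is $\al(x)$, which is precisely the object of $\el(\yh)$ at which the arrow $\thh_x(g)$ starts, so that the two formulas for horizontal composition match on the nose.
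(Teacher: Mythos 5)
Your proposal is correct and matches the paper's argument, which simply states that the 2-functoriality is straightforward to check and that the isomorphism of hom-categories follows immediately from Propositions \ref{Prop1to1} and \ref{Prop2to2}; you have merely written out the routine verifications (a)--(d) that the paper leaves implicit. All the index bookkeeping, in particular the observation that $(\tau\ci\thh)_x=\tau_{\ax}\ci\thh_x$ matches the composite functor on the nose, is accurate.
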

\begin{proof}
It is straightforward to check $\el$ is in fact a 2-functor. The latter part also follows from Propositions \ref{Prop1to1}, \ref{Prop2to2} immediately.
\end{proof}

\begin{dfn}\label{DefInd}$($\cite[Definition 3.1.1]{N_BisetMackey}$)$
Let $\iota\co H\hookrightarrow G$ be a monomorphism of groups.
For any $X\in\Ob(\Hs)$, we define $\Ind_{\iota}X\in\Ob(\Gs)$ by
\[ \Ind_{\iota}X=(G\times X)/\sim, \]
where the equivalence relation $\sim$ is defined by
\begin{itemize}
\item[-] $(\xi,x)$ and $(\xi\ppr,x\ppr)$ in $G\times X$ are equivalent if there exists $h\in H$ satisfying
\[ x\ppr=hx,\ \ \xi=\xi\ppr\iota(h). \]
\end{itemize}
We denote the equivalence class of $(\xi,x)$ by $[\xi,x]\in\Ind_{\iota}X$. The $G$-action on $\Ind_{\iota}X$ is defined by
\[ g[\xi,x]=[g\xi,x] \]
for any $g\in G$ and $[\xi,x]\in\Ind_{\iota}X$.
\end{dfn}

The following facts have been shown in \cite{N_BisetMackey}.
\begin{fact}\label{PropIndEquiv}$($\cite[Proposition 3.1.2]{N_BisetMackey}$)$
Let $\iota\co H\hookrightarrow G$ be a monomorphism of groups.
For any $X\in\Ob(\Hs)$, if we define a map $\ups\co X\to\Ind_{\iota}X$ by
\[ \ups(x)=[e,x]\quad(\fa x\in X), \]
then the 1-cell
\[ \frac{\ups}{\iota}\co\frac{X}{H}\to \frac{\Ind_{\iota}X}{G} \]
becomes an equivalence $($cf. Remark \ref{RemImmed}$)$.
\end{fact}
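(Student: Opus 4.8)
The plan is to detect the equivalence through the $2$-functor $\el$ of Corollary \ref{Cor2Ftr}, which reduces the claim to an elementary statement about groupoids. The key general observation is that if $F\co\Kbb\to\Kbb\ppr$ is a strict $2$-functor which restricts, for every pair of $0$-cells, to an isomorphism of hom-categories, then $F$ both preserves and reflects equivalences. Preservation is immediate, by applying $F$ to a pseudo-inverse together with its unit and counit $2$-cells. For reflection, suppose $F(f)$ admits a pseudo-inverse $h$ with invertible $2$-cells $\eta\co\id\tc h\ci F(f)$ and $\ep\co F(f)\ci h\tc\id$. Bijectivity on $1$-cells lifts $h$ to a unique $g$ with $F(g)=h$; since $F$ is strict, $F(g\ci f)=h\ci F(f)$ and $F(\id)=\id$, so bijectivity on $2$-cells lifts $\eta$, $\ep$ and their inverses to $2$-cells of $\Kbb$, and injectivity on $2$-cells forces the relevant composites to be identity $2$-cells. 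Thus $g$ is a pseudo-inverse of $f$. By Corollary \ref{Cor2Ftr}, $\el$ is such a $2$-functor, so it suffices to show that $\el$ carries $\frac{\ups}{\iota}$ to an equivalence of finite groupoids.

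Unravelling Proposition \ref{Prop1to1} and Definition \ref{Def0to0}, the functor $\el(\frac{\ups}{\iota})\co\el(\frac{X}{H})\to\el(\frac{\Ind_{\iota}X}{G})$ between action groupoids sends an object $x\in X$ to $[e,x]$ and a morphism $h\co x\to hx$ (with $h\in H$) to $\iota(h)\co[e,x]\to[e,hx]$; this is well defined because $[\iota(h),x]=[e,hx]$ in $\Ind_{\iota}X$ by the defining equivalence relation. I would then verify the two standard conditions. \emph{Full faithfulness:} a morphism $[e,x]\to[e,x\ppr]$ in $\el(\frac{\Ind_{\iota}X}{G})$ is an element $g\in G$ with $[g,x]=[e,x\ppr]$, and by the equivalence relation this occurs precisely when $g=\iota(h)$ for some $h\in H$ with $hx=x\ppr$; such an $h$ is unique since $\iota$ is injective, so $h\mapsto\iota(h)$ is a bijection from $\{h\in H\mid hx=x\ppr\}$ onto that morphism set. \emph{Essential surjectivity:} every object of $\el(\frac{\Ind_{\iota}X}{G})$ has the form $[\xi,x]$, and $\xi$ gives an isomorphism $[e,x]\to[\xi,x]$ in the action groupoid, so $[\xi,x]$ is isomorphic to $[e,x]$, which lies in the image. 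Being fully faithful and essentially surjective between groupoids, $\el(\frac{\ups}{\iota})$ is an equivalence, hence so is $\frac{\ups}{\iota}$.

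I do not expect a genuine obstacle. The only points requiring care are the verification that $\frac{\ups}{\iota}$ is a legitimate $1$-cell of $\Sbb$ (conditions (i) and (ii) of Definition \ref{DefS}(1), which reduce to $[\iota(h),x]=[e,hx]$ and to $\iota$ being a homomorphism) and the bookkeeping with the relation on $G\ti X$ defining $\Ind_{\iota}X$ --- in particular that $[g,x]=[e,x\ppr]$ forces $g\in\iota(H)$. One could instead avoid $\el$ and construct a pseudo-inverse $1$-cell in $\Sbb$ directly from a transversal of $\iota(H)$ in $G$, but that is more cumbersome and rests on the same computation.
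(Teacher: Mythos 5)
Your argument is correct, and there is no circularity: Fact \ref{PropIndEquiv} appears in the paper only as a citation of \cite[Proposition 3.1.2]{N_BisetMackey}, where the proof is the more hands-on one you mention at the end --- an explicit pseudo-inverse $1$-cell $\frac{\Ind_{\iota}X}{G}\to\frac{X}{H}$ built from a choice of representatives for $\iota(H)$-cosets, together with explicitly described unit and counit $2$-cells. Your route instead uses machinery that this paper sets up \emph{before} stating the Fact (Definition \ref{Def0to0} through Corollary \ref{Cor2Ftr}), none of which depends on Fact \ref{PropIndEquiv}, so the order of logical dependence is respected. The two ingredients are both sound: a strict $2$-functor that is bijective on $1$-cells and on $2$-cells between each fixed pair of $0$-cells does reflect equivalences (your lifting argument is fine; note that in $\Sbb$ every $2$-cell is invertible anyway, which makes the invertibility of the lifted units and counits automatic), and the computation that $\el(\frac{\ups}{\iota})$ is fully faithful and essentially surjective is exactly the content of the relation defining $\Ind_{\iota}X$ --- in particular $[g,x]=[e,x\ppr]$ forces $g=\iota(h)$ with $hx=x\ppr$, and injectivity of $\iota$ gives faithfulness. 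What your approach buys is a cleaner conceptual proof and a reusable lemma (that $\el$ reflects equivalences, which is implicit in the paper's claim that $\el$ is a biequivalence); what the original construction buys is an explicit quasi-inverse $1$-cell inside $\Sbb$, which \cite{N_BisetMackey} needs elsewhere. Either is acceptable here.
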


\begin{fact}\label{Prop2CoprodEqui}$($\cite[Proposition 3.2.13]{N_BisetMackey}$)$
Let $G$ be any finite group.
For any $X,Y\in\Ob(\Gs)$, let
\[ X\ov{\ups_X}{\hookrightarrow}X\am Y\ov{\ups_Y}{\hookleftarrow}Y \]
be the coproduct in $\Gs$.
Then
\begin{equation}\label{Equ1}
\xg\ov{\frac{\ups_X}{G}}{\lra}\frac{X\am Y}{G}\ov{\frac{\ups_Y}{G}}{\lla}\frac{Y}{G}
\end{equation}
gives a bicoproduct of $\xg$ and $\yg$ in $\Sbb$.
\end{fact}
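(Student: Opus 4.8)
The plan is to transport the assertion along the $2$-functor $\el\co\Sbb\to\finGpd$, exploiting that $\el$ induces \emph{isomorphisms} on all hom-categories (Corollary \ref{Cor2Ftr}). Since being a $2$-coproduct is a property expressed entirely in terms of hom-categories, it suffices to check (a) that $\el$ sends the span $(\ref{Equ1})$ to a coproduct span in $\finGpd$, and (b) that disjoint union is a coproduct of finite groupoids in the appropriate sense.

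For (a): by Definition \ref{Def0to0}, $\Ob(\el(\tfrac{X\am Y}{G}))=X\am Y$, and for $z,z\ppr\in X\am Y$ the hom-set $(\el(\tfrac{X\am Y}{G}))(z,z\ppr)=\{g\in G\mid gz=z\ppr\}$ is empty unless $z$ and $z\ppr$ lie in the same summand, in which case it coincides with the corresponding hom-set of $\el(\xg)$ or of $\el(\yg)$. Hence there is a canonical identification
\[ \el(\tfrac{X\am Y}{G})\;=\;\el(\xg)\am\el(\yg) \]
in $\finGpd$. Unwinding the description of $\el$ on $1$-cells from the proof of Proposition \ref{Prop1to1} — for the equivariant $1$-cell $\frac{\ups_X}{G}$ one has $\thh_x=\id_G$, so $F_{x,gx}(g)=g$ — shows that $\el(\frac{\ups_X}{G})$ is precisely the canonical inclusion $\el(\xg)\hookrightarrow\el(\xg)\am\el(\yg)$, and likewise for $\frac{\ups_Y}{G}$. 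Thus $\el$ carries $(\ref{Equ1})$ to the evident coproduct diagram of groupoids.

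For (b): disjoint union is a coproduct in $\Cat$ in the strict sense, and this restricts to the full $2$-subcategory $\finGpd$; concretely, for every finite groupoid $\Ecal$ precomposition with the two inclusions is an isomorphism of categories
\[ \finGpd(\el(\xg)\am\el(\yg),\Ecal)\;\ov{\cong}{\lra}\;\finGpd(\el(\xg),\Ecal)\ti\finGpd(\el(\yg),\Ecal). \]
Now fix an arbitrary $\zk\in\Sbb^0$ and set $\Ecal=\el(\zk)$. By Corollary \ref{Cor2Ftr} together with the identification of (a), $\el$ induces isomorphisms on the three relevant hom-categories, and, $\el$ being a $2$-functor, these assemble into a commutative square whose left vertical arrow is the $\Sbb$-precomposition functor
\[ \Sbb(\tfrac{X\am Y}{G},\zk)\;\lra\;\Sbb(\xg,\zk)\ti\Sbb(\yg,\zk), \]
whose right vertical arrow is the isomorphism of (b), and whose horizontal arrows are isomorphisms. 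Therefore the $\Sbb$-precomposition functor is an isomorphism of categories, a fortiori an equivalence; as this holds for every $\zk$, the span $(\ref{Equ1})$ is a bicoproduct of $\xg$ and $\yg$ in $\Sbb$ — indeed a strict $2$-coproduct.

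The only step carrying genuine content is (a): the elementary but necessary identifications of objects, hom-sets, and the two inclusion $1$-cells under $\el$. Everything after that is formal, amounting to transporting the strict universal property of $\am$ through the local isomorphism $\el$. One could equivalently bypass $\el$ and argue by hand, building from $1$-cells $\frac{\al}{\thh}\co\xg\to\zk$ and $\frac{\al\ppr}{\thh\ppr}\co\yg\to\zk$ the unique copairing $1$-cell $\frac{X\am Y}{G}\to\zk$ (using the universal property of $X\am Y$ in $\Gs$ on underlying sets and assembling the families of group homomorphisms componentwise), and similarly on $2$-cells; but this merely reproves (b) directly.
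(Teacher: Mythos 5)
Your argument is correct, and there is no circularity: Proposition \ref{Prop1to1}, Proposition \ref{Prop2to2} and Corollary \ref{Cor2Ftr} concern only the hom-categories between two fixed $0$-cells and do not presuppose the existence of bicoproducts, so you may legitimately invoke them here. Note, however, that the paper itself gives no proof of this statement --- it is imported as a Fact from \cite[Proposition 3.2.13]{N_BisetMackey}, where the argument is the direct one you sketch in your closing remark: one builds the copairing $1$-cell $\frac{X\am Y}{G}\to\zk$ by hand from a pair $\frac{\al}{\thh}$, $\frac{\al\ppr}{\thh\ppr}$ (gluing the maps on underlying sets and the families $\thh,\thh\ppr$ componentwise, which works because the $G$-action on $X\am Y$ preserves the summands) and checks uniqueness up to unique invertible $2$-cell. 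Your route is genuinely different and more structural: it reduces the statement to the trivial observation that disjoint union is a strict $2$-coproduct in $\Cat$ (hence in the full sub-$2$-category $\finGpd$), and then transports the universal property backwards through the local isomorphisms of Corollary \ref{Cor2Ftr}. Your step (a) is precisely the content the paper records separately as Remark \ref{Rema1}, so in effect you have shown that Fact \ref{Prop2CoprodEqui} is a formal consequence of Corollary \ref{Cor2Ftr} together with Remark \ref{Rema1}; the only caveat is that, within the logical order of the original reference, the direct verification comes first and the biequivalence $\el$ is a later addition of the present paper, so your proof could not replace the cited one there, but as a self-contained argument in the setting of this paper it is complete. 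What the direct approach buys is independence from $\el$; what yours buys is that the same one-line transport argument simultaneously handles the $1$-cell and $2$-cell levels of the universal property, and yields the stronger conclusion that the precomposition functor is an isomorphism of categories rather than a mere equivalence.
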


\begin{rem}\label{Rema1}
$\el$ sends $(\ref{Equ1})$ to the coproduct of categories
\[ \el(\xg)\hookrightarrow\el(\xg)\am\el(\yh)\hookleftarrow\el(\yg). \]
\end{rem}

\begin{fact}\label{Prop2CoprodVari}$($\cite[Proposition 3.2.15]{N_BisetMackey}$)$
Let $\xg$ and $\yh$ be any pair of 0-cells in $\Sbb$. Denote the monomorphisms\begin{eqnarray*}
&G\to G\times H\ ; \ g\mapsto (g,e)&\\
&H\to G\times H\ ; \ h\mapsto (e,h)&
\end{eqnarray*}
by $\iog$ and $\ioh$ respectively, and denote the natural maps
\begin{eqnarray*}
&X\to\Ind_{\iog}X\am\Ind_{\ioh}Y\ ;\ x\mapsto [e,x]\in\Ind_{\iog}X&\\
&Y\to\Ind_{\iog}X\am\Ind_{\ioh}Y\ ;\ y\mapsto [e,y]\in\Ind_{\ioh}Y&
\end{eqnarray*}
by $\ups_X$ and $\ups_Y$.
Then
\[ \xg\ov{\frac{\ups_X}{\iog}}{\lra}\frac{\Ind_{\iog}X\am\Ind_{\ioh}Y}{G\times H}\ov{\frac{\ups_Y}{\ioh}}{\lla}\yh \]
gives a bicoproduct $\xg\am\yh$ of $\xg$ and $\yh$ in $\Sbb$.
\end{fact}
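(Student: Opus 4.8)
The plan is to deduce the statement by combining the two preceding facts: Fact~\ref{PropIndEquiv}, which says that induction along a monomorphism of groups produces an \emph{equivalence} in $\Sbb$, and Fact~\ref{Prop2CoprodEqui}, which produces the binary bicoproduct in $\Sbb$ in the special case where the two $0$-cells carry the \emph{same} group. The only extra ingredient needed is the formal observation that a bicoproduct is unaffected, up to the evident change of legs, when each leg is precomposed with an equivalence.

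First I would record that observation. Recall that a span $\frac{A}{J}\ov{i}{\lra}\frac{C}{K}\ov{j}{\lla}\frac{B}{L}$ is a bicoproduct in $\Sbb$ if and only if, for every $0$-cell $\zk$, the functor
\[ \Sbb(\tfrac{C}{K},\zk)\lra\Sbb(\tfrac{A}{J},\zk)\ti\Sbb(\tfrac{B}{L},\zk),\qquad p\longmapsto(p\ci i,\ p\ci j) \]
is an equivalence of categories. Now if $f\co\frac{A\ppr}{J\ppr}\to\frac{A}{J}$ and $g\co\frac{B\ppr}{L\ppr}\to\frac{B}{L}$ are equivalences in $\Sbb$, then precomposition with $f$ and with $g$ are equivalences of hom-categories (a quasi-inverse is precomposition with a quasi-inverse of $f$, resp.\ $g$, the coherence $2$-cells being obtained by whiskering and using that every $2$-cell in $\Sbb$ is invertible). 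Composing these equivalences with the displayed one shows that $\frac{A\ppr}{J\ppr}\ov{i\ci f}{\lra}\frac{C}{K}\ov{j\ci g}{\lla}\frac{B\ppr}{L\ppr}$ is again a bicoproduct.

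Next I would apply this over the group $G\ti H$. Fact~\ref{Prop2CoprodEqui}, with $G$ replaced by $G\ti H$ and with the $(G\ti H)$-sets $\Ind_{\iog}X$ and $\Ind_{\ioh}Y$, shows that
\[ \frac{\Ind_{\iog}X}{G\ti H}\lra\frac{\Ind_{\iog}X\am\Ind_{\ioh}Y}{G\ti H}\lla\frac{\Ind_{\ioh}Y}{G\ti H} \]
is a bicoproduct, the two legs being $\id_{G\ti H}$-equivariant and induced by the coproduct inclusions of $(G\ti H)$-sets. By Fact~\ref{PropIndEquiv}, the $1$-cells $\frac{\ups_X}{\iog}\co\xg\to\frac{\Ind_{\iog}X}{G\ti H}$ and $\frac{\ups_Y}{\ioh}\co\yh\to\frac{\Ind_{\ioh}Y}{G\ti H}$ are equivalences, so by the previous paragraph precomposing the two legs with them yields a bicoproduct of $\xg$ and $\yh$ with middle corner $\frac{\Ind_{\iog}X\am\Ind_{\ioh}Y}{G\ti H}$. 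It then remains to identify the composite legs with the $1$-cells $\frac{\ups_X}{\iog}$, $\frac{\ups_Y}{\ioh}$ written in the statement: since the second leg is $\id_{G\ti H}$-equivariant, the composite with $\frac{\ups_X}{\iog}$ is $\iog$-equivariant by the composition law of Definition~\ref{DefS}, and its underlying map of sets is $x\mapsto[e,x]\in\Ind_{\iog}X\mapsto[e,x]\in\Ind_{\iog}X\am\Ind_{\ioh}Y$, which is exactly the map $\ups_X$ of the statement (and symmetrically for $Y$).

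I do not expect a genuine obstacle: the whole content is the invariance of bicoproducts under precomposition with equivalences, which is formal, together with a routine unwinding of the composition rule for $1$-cells; the statement should be seen as packaging Facts~\ref{PropIndEquiv} and~\ref{Prop2CoprodEqui} into an explicit construction of binary bicoproducts between arbitrary $0$-cells of $\Sbb$ (whose groups may differ). As an alternative route, one could transport the problem through $\el$: by Remark~\ref{Rema1} and Fact~\ref{PropIndEquiv} (together with the fact that $\el$ preserves equivalences), $\el$ sends the displayed span to the coproduct span $\el(\xg)\hookrightarrow\el(\xg)\am\el(\yh)\hookleftarrow\el(\yh)$ in $\finGpd$; since $\el$ is a local isomorphism on hom-categories (Corollary~\ref{Cor2Ftr}) and a biequivalence, a span in $\Sbb$ whose image under $\el$ is a bicoproduct must itself be a bicoproduct.
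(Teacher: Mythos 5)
Your argument is correct. Note that the paper itself gives no proof here: the statement is imported as a Fact from \cite[Proposition 3.2.15]{N_BisetMackey}, and its placement immediately after Facts \ref{PropIndEquiv} and \ref{Prop2CoprodEqui} indicates that your derivation --- the same-group bicoproduct of Fact \ref{Prop2CoprodEqui} over $G\ti H$, transported along the induction equivalences of Fact \ref{PropIndEquiv} via the formal invariance of bicoproducts under precomposition of the legs with equivalences, followed by the routine identification of the composite $1$-cells as $\frac{\ups_X}{\iog}$ and $\frac{\ups_Y}{\ioh}$ --- is exactly the intended route, and it is sound.
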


\begin{rem}\label{RemCCCCC}
As a consequence, for any $\xg,\yh\in\Sbb^0$, its image $\el(\xg\am\yh)$ becomes equivalent to the coproduct of $\el(\xg)$ and $\el(\yh)$ by
\begin{eqnarray*}
\el(\xg\am\yh)&=&\el(\frac{\Ind_{\iog}X\am\Ind_{\ioh}Y}{G\ti H})\\
&=&\el(\frac{\Ind_{\iog}X}{G\ti H})\, \am\, \el(\frac{\Ind_{\ioh}Y}{G\ti H})%
\ \simeq\ \el(\xg)\am\el(\yh).
\end{eqnarray*}
\end{rem}

\begin{fact}\label{Prop2Pullback}$($\cite[Proposition 3.2.17]{N_BisetMackey}$)$
Let $\althh\co\xg\to\zk$ and $\bet\co\yh\to\zk$ be any pair of 1-cells in $\Sbb$.
Denote the natural projection homomorphisms by
\[ \prg\co G\times H\to G,\ \ \prh\co G\times H\to H. \]
If we
\begin{itemize}
\item[-] put $F=\{(x,y,k)\in X\times Y\times K\mid \be(y)=k\ax \}$, and put
\begin{eqnarray*}
&\wp_X\co F\to X\ ;\ (x,y,k)\mapsto x,&\\
&\wp_Y\co F\to Y\ ;\ (x,y,k)\mapsto y,&
\end{eqnarray*}
\item[-] equip $F$ with a $G\times H$-action
\begin{eqnarray*}
&(g,h)(x,y,k)=(gx,hy,\tau_y(h) k\thh_x(g)\iv)&\\
&(\fa (g,h)\in G\times H,\ \ \fa (x,y,k)\in F),&
\end{eqnarray*}
\item[-] define a 2-cell $\kappa\co\al\ci\wp_X\tc\be\ci\wp_Y$ by
\[ \kappa_{(x,y,k)}=k, \]
\end{itemize}
then the diagram
\begin{equation}\label{Diag18_0}
\xy
(-10,7)*+{\frac{F}{G\times H}}="0";
(10,7)*+{\xg}="2";
(-10,-6)*+{\yh}="4";
(10,-6)*+{\zk}="6";
{\ar^(0.52){\frac{\wp_X}{\prg}} "0";"2"};
{\ar_{\frac{\wp_Y}{\prh}} "0";"4"};
{\ar^{\althh} "2";"6"};
{\ar_{\bet} "4";"6"};
{\ar@{=>}^{\kappa} (2,2);(-2,-2)};
\endxy
\end{equation}
gives a bipullback $($\cite[P.155]{JS}$)$ in $\Sbb$.

In particular if $\zk=\pte$, then $\xg\ov{\frac{\wp_X}{\prg}}{\lla}\frac{F}{G\ti H}\ov{\frac{\wp_Y}{\prh}}{\lra}\yh$ gives a biproduct of $\xg$ and $\yh$. Here $\pt$ denotes a set with one element, with the trivial group action.
\end{fact}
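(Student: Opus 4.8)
The plan is to transport the statement across the $2$-functor $\el\co\Sbb\to\finGpd$. By Corollary \ref{Cor2Ftr}, $\el$ is bijective on $1$-cells and on $2$-cells between any fixed pair of $0$-cells and is compatible with horizontal and vertical composition; consequently, for every $\frac{W}{L}\in\Sbb^0$ it induces an isomorphism between the category of cones in $\Sbb$ with apex $\frac{W}{L}$ over the cospan $\xg\ov{\althh}{\lra}\zk\ov{\bet}{\lla}\yh$ and the category of cones in $\finGpd$ with apex $\el(\frac{W}{L})$ over $\el(\xg)\ov{\el(\althh)}{\lra}\el(\zk)\ov{\el(\bet)}{\lla}\el(\yh)$, compatibly with restriction along the candidate cone $(\ref{Diag18_0})$. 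Hence $(\ref{Diag18_0})$ is a bipullback in $\Sbb$ as soon as its $\el$-image satisfies the bipullback universal property against every object of the shape $\el(\frac{W}{L})$, and for that it suffices to prove the $\el$-image is an honest bipullback in $\finGpd$.

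Next I would compute $\el$ on the data of $(\ref{Diag18_0})$. By Definition \ref{Def0to0}, an object of $\el(\frac{F}{G\ti H})$ is a triple $(x,y,k)$ with $\be(y)=k\ax$, which is exactly the datum of $x\in\el(\xg)$, $y\in\el(\yh)$ and an isomorphism $k\co\al(x)=\el(\althh)(x)\ov{\sim}{\lra}\el(\bet)(y)=\be(y)$ in $\el(\zk)$; using the description of $\el$ on $1$-cells from the proof of Proposition \ref{Prop1to1}, whereby $\el(\althh)$ sends a morphism $g\co x\to x'$ to $\thh_x(g)$ and $\el(\bet)$ sends $h\co y\to y'$ to $\tau_y(h)$, the defining relation $\tau_y(h)\,k\,\thh_x(g)\iv=k'$ for a morphism $(g,h)$ of $\el(\frac{F}{G\ti H})$ becomes precisely the commutativity in $\el(\zk)$ of the square with sides $k,k',\thh_x(g),\tau_y(h)$. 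One checks along the same lines that $\el(\frac{\wp_X}{\prg})$ and $\el(\frac{\wp_Y}{\prh})$ are the two canonical projections and that $\el(\kappa)$ --- which by Proposition \ref{Prop2to2} is again the family $\{k\}_{(x,y,k)}$ --- is the structural natural isomorphism. Thus $\el$ carries $(\ref{Diag18_0})$ to the iso-comma (pseudo-pullback) square of $\el(\althh)$ and $\el(\bet)$. Since the iso-comma construction computes bipullbacks in $\Cat$, and an iso-comma of finite groupoids is again a finite groupoid with $\finGpd\se\Cat$ a full $2$-subcategory containing the cospan and the iso-comma object, this square is a bipullback in $\finGpd$; transporting back along Corollary \ref{Cor2Ftr} gives the claim. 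For the case $\zk=\pte$, note $\el(\pte)$ is the terminal category, so $\pte$ is biterminal in $\Sbb$ and a bipullback over it is a biproduct --- concretely $F=\{(x,y,e)\}\cong X\ti Y$ with the product $G\ti H$-action.

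The main obstacle is the bookkeeping in the middle step: one must match the \emph{direction} of the isomorphism $k$, verify with the composition conventions of Definition \ref{DefS} that $\el(\althh)$ and $\el(\bet)$ act on morphisms by $g\mapsto\thh_x(g)$ and $h\mapsto\tau_y(h)$ (so that the $G\ti H$-action formula defining $F$ really is the commuting-square condition in $\el(\zk)$), and confirm that $\kappa$ is the comparison $2$-cell rather than merely some isomorphism. Once these identifications are pinned down, the remaining verifications are routine and formal.
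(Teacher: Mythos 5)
Your argument is correct, but it proves the statement by a genuinely different route from the one the paper relies on: here the result is quoted as a Fact from \cite{N_BisetMackey}, where it is established by a direct verification of the bipullback universal property inside $\Sbb$ (explicitly building, for each test cone with apex $\frac{W}{L}$, the comparison $1$-cell into $\frac{F}{G\ti H}$ and checking essential surjectivity and full faithfulness of the comparison functor by hand). You instead transport the whole problem along $\el$, using Corollary \ref{Cor2Ftr} to identify hom-categories and hence cone categories, identify the image of $(\ref{Diag18_0})$ with the iso-comma square of $\el(\althh)$ and $\el(\bet)$ (which coincides with the comma square since $\el(\zk)$ is a groupoid), and invoke the standard fact that iso-comma squares are bipullbacks in $\Cat$, inherited by the full $2$-subcategory $\finGpd$. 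There is no circularity: Corollary \ref{Cor2Ftr} and Propositions \ref{Prop1to1}, \ref{Prop2to2} are proved independently of this Fact, and your reduction correctly observes that the universal property in $\Sbb$ only needs to be tested against objects of the form $\el(\frac{W}{L})$. What your approach buys is conceptual economy --- it makes Remark \ref{Rema2} automatic rather than an afterthought and outsources the hard combinatorics to the Joyal--Street result \cite{JS} already cited for the definition of bipullback; what the direct approach buys is self-containedness within the $\frac{X}{G}$-formalism, without presupposing the dictionary with groupoids (which, in the logical order of the original reference, is developed only afterwards). The bookkeeping points you flag (direction of $k$, the action of $\el(\althh)$ on morphisms being $g\mapsto\thh_x(g)$, and the verification that $\kappa$ satisfies conditions (i),(ii) of Definition \ref{DefS}(2)) all check out against the conventions of Definition \ref{DefS} and the proof of Proposition \ref{Prop1to1}, so the argument is complete once those are written down.
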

\begin{rem}\label{Rema2}
$\el$ sends $(\ref{Diag18_0})$ to the comma square $($\cite[Diagram 1.12]{Borceux}, \cite[Proposition 1.26]{Groth}$)$ of categories
\[
\xy
(-10,7)*+{\el(\althh)/\el(\bet)}="0";
(10,7)*+{\el(\xg)}="2";
(-10,-6)*+{\el(\yh)}="4";
(10,-6)*+{\el(\zk)}="6";
{\ar^{} "0";"2"};
{\ar_{} "0";"4"};
{\ar^{\el(\althh)} "2";"6"};
{\ar_{\el(\bet)} "4";"6"};
{\ar@{=>}^{} (2,2);(-2,-2)};
\endxy.
\]
\end{rem}

\begin{prop}\label{Bieq}
For any $\Gcal\in\finGpd^0$, there is $\xg\in\Sbb^0$ admitting an equivalence $\el(\xg)\simeq\Gcal$. In particular, $\el\co\Sbb\to\finGpd$ is a biequivalence as in \cite[section 2.2]{Leinster}, by Corollary \ref{Cor2Ftr}.
\end{prop}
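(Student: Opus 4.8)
The plan is to produce, for a given finite groupoid $\Gcal$, an explicit $0$-cell $\xg\in\Sbb^0$ whose category of elements $\el(\xg)$ is equivalent to $\Gcal$, and then to combine this with Corollary \ref{Cor2Ftr} to conclude biequivalence. First I would reduce to the connected case: a finite groupoid $\Gcal$ is a finite coproduct of its connected components $\Gcal_1,\ldots,\Gcal_n$, so by Remark \ref{RemCCCCC} it suffices to realise each $\Gcal_i$ as some $\el\bigl(\frac{X_i}{G_i}\bigr)$ and then take the coproduct $\coprod_i\frac{X_i}{G_i}$ in $\Sbb$, whose image under $\el$ is $\coprod_i\el\bigl(\frac{X_i}{G_i}\bigr)\simeq\coprod_i\Gcal_i=\Gcal$.

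For a connected finite groupoid $\Gcal$, fix an object $c\in\Ob(\Gcal)$ and set $G=\Aut_{\Gcal}(c)$, a finite group. I claim $\el\bigl(\frac{\pt}{G}\bigr)\simeq\Gcal$, where $\frac{\pt}{G}$ is the one-point $G$-set with the (unique, hence trivial) action. Indeed by Definition \ref{Def0to0} the category $\el\bigl(\frac{\pt}{G}\bigr)$ has a single object, with endomorphism set $\{g\in G\mid g\cdot\pt=\pt\}=G$ and composition the group multiplication; that is, $\el\bigl(\frac{\pt}{G}\bigr)$ is the one-object groupoid $\Bbf G$. The inclusion of the full subcategory on $c$ into $\Gcal$ is then an equivalence $\Bbf G\xrightarrow{\simeq}\Gcal$ precisely because $\Gcal$ is connected (every object is isomorphic to $c$) — this is the standard skeleton argument for groupoids, and requires no choice beyond one isomorphism $c\to d$ per object $d$. (If one prefers to avoid choice entirely, one may instead take $X=\Ob(\Gcal)$ with its evident $G$-action after transporting all objects to $c$; but the skeletal description is cleanest.)

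Having realised every $\Gcal$ as $\el(\xg)$ for a suitable $\xg$, the final assertion that $\el\co\Sbb\to\finGpd$ is a biequivalence is immediate: by Corollary \ref{Cor2Ftr}, $\el$ induces an isomorphism of hom-categories $\Sbb(\xg,\yh)\xrightarrow{\cong}\finGpd(\el(\xg),\el(\yh))$, so $\el$ is a strict $2$-functor which is locally an equivalence (in fact locally an isomorphism) and essentially surjective on $0$-cells in the bicategorical sense — exactly the definition of biequivalence in \cite[section 2.2]{Leinster}.

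I expect the only real point requiring care to be the bookkeeping in the reduction to connected components: one must check that the coproduct in $\Sbb$ furnished by Fact \ref{Prop2CoprodVari}, applied iteratively, is sent by $\el$ to the genuine coproduct of the categories $\el\bigl(\frac{X_i}{G_i}\bigr)$, up to the equivalences of Remark \ref{RemCCCCC}, and that a finite coproduct of equivalences of finite groupoids is again an equivalence. Both are routine, but the second uses that $\finGpd$ is closed under finite coproducts and that equivalences are detected componentwise; neither is delicate. Everything else is the elementary skeleton-of-a-connected-groupoid fact together with the already-established Corollary \ref{Cor2Ftr}.
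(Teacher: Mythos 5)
Your proposal is correct and follows essentially the same route as the paper: reduce to the connected case via Remark \ref{RemCCCCC}, realise a connected finite groupoid $\Gcal$ as the image of the one-point $0$-cell $\frac{\pt}{\Aut_{\Gcal}(c)}$ (the paper writes this as $\frac{\{x\}}{\Aut_{\Gcal}(x)}$), observe that its image under $\el$ is a full, dense subcategory of $\Gcal$ by connectedness, and conclude biequivalence from Corollary \ref{Cor2Ftr}. No substantive difference.
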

\begin{proof}
By Remark \ref{RemCCCCC}, we may assume $\Gcal$ is connected. Take any $x\in\Ob(\Gcal)$ and put $\Aut_{\Gcal}(x)=\Gcal(x,x)$. Then the 0-cell $\frac{\{ x\}}{\Aut_{\Gcal}(x)}\in\Sbb^0$ is sent by $\el$ to a full subcategory of $\Gcal$.

For any $x\ppr\in\Ob(\Gcal)$, there is an isomorphism $x\to x\ppr$ since $\Gcal$ is connected groupoid. This means $\el(\frac{\{ x\}}{\Aut_{\Gcal}(x)})$ is dense in (and hence equivalent to) $\Gcal$.
\end{proof}

\begin{dfn}\label{DefC}
Category $\Csc=\Sbb/\text{2-cells}$ is defined as follows.
\begin{itemize}
\item[{\rm (i)}] $\Ob(\C)=\Sbb^0$, namely, an object in $\Csc$ is a 0-cell in $\Sbb$.
\item[{\rm (ii)}] For any pair of objects $\xg,\yh\in\Ob(\Csc)$, define as
\[ \C(\xg,\yh)=\cl(\Sbb(\xg,\yh)). \]
The isomorphism class of $\althh\in\Ob(\Sbb(\xg,\yh))=\Sbb^1(\xg,\yh)$ is denoted by $\und{(\althh)}$, or abbreviately by $\und{\al}$.
\end{itemize}
\end{dfn}

\begin{rem}\label{RemImmed}
For any 1-cell $\al\co\xg\to\yh$ in $\Sbb$, the following are equivalent.
\begin{enumerate}
\item $\und{\al}$ is an isomorphism in $\Csc$.
\item $\al$ is an {\it equivalence} in $\Sbb$. Namely, there is a 1-cell $\be\co\yh\to\xg$ and 2-cells $\rho\co \be\ci\al\tc\id$, $\lam\co \al\ci\be\tc\id$.
\end{enumerate}
(As stated in Remark \ref{RemAbb}, 1-cells are abbreviated by $\al,\be$.)
\end{rem}

\begin{dfn}\label{DefNWP}
While biproducts and bicoproducts in $\Sbb$ yield products and coproducts in $\C$, remark that the image of a bipullback in $\Sbb$ becomes only a weak pullback in $\Csc$.

A weak pullback in $\Csc$
\[
\xy
(-8,6)*+{\wl}="0";
(8,6)*+{\yh}="2";
(-8,-6)*+{\xg}="4";
(8,-6)*+{\zk}="6";
{\ar^{\und{\delta}} "0";"2"};
{\ar_{\und{\gamma}} "0";"4"};
{\ar^{\und{\be}} "2";"6"};
{\ar_{\und{\al}} "4";"6"};
{\ar@{}|\circlearrowright "0";"6"};
\endxy
\]
is called a {\it natural weak pullback} (of $\und{\al}$ and $\und{\be}$) if it comes from some bipullback
\[
\xy
(-8,6)*+{\wl}="0";
(8,6)*+{\yh}="2";
(-8,-6)*+{\xg}="4";
(8,-6)*+{\zk}="6";
{\ar^{\delta} "0";"2"};
{\ar_{\gamma} "0";"4"};
{\ar^{\be} "2";"6"};
{\ar_{\al} "4";"6"};
{\ar@{=>}^{\kappa} (2,2);(-2,-2)};
\endxy
\]
in $\Sbb$.
We write as 
\begin{equation}\label{RCoeffAdd2}
\xy
(-8,6)*+{\wl}="0";
(8,6)*+{\yh}="2";
(-8,-6)*+{\xg}="4";
(8,-6)*+{\zk}="6";
(0,0)*+{\nwp}="10";
{\ar^{\und{\delta}} "0";"2"};
{\ar_{\und{\gamma}} "0";"4"};
{\ar^{\und{\be}} "2";"6"};
{\ar_{\und{\al}} "4";"6"};
\endxy
\end{equation}
to indicate it is a natural weak pullback. In this way, we can give $\C$ a class of natural weak pullbacks.
\end{dfn}

\begin{cor}\label{Property_of_C}
The category $\Csc$ has the following.
\begin{enumerate}
\item Initial object $\ems=\frac{\ems}{e}$, and terminal object $\pte$.
\item Any binary product and binary coproduct.
\item A class of natural weak pullbacks.
\end{enumerate}
In particular by {\rm (1),(2)}, $\C$ has any finite product and any finite coproduct. 
\end{cor}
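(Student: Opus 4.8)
The plan is to verify each of the three listed properties of $\Csc$ by transporting the corresponding bicategorical structure from $\Sbb$ through the quotient functor $\Sbb\to\Csc$ that identifies 2-isomorphic 1-cells. The general principle is that a bilimit in $\Sbb$ (initial/terminal object, bicoproduct, biproduct, bipullback) becomes an ordinary (or weak) limit of the same shape in $\Csc$, because a universal property ``up to coherent 2-isomorphism'' becomes a universal property ``on the nose'' once one passes to isomorphism classes of 1-cells. Concretely, $\Csc(\xg,\yh)=\cl(\Sbb(\xg,\yh))$, so a 1-cell $\althh$ representing a given class in $\Csc$ is unique exactly up to a 2-cell in $\Sbb$, which is precisely the ambiguity that bi-universal properties absorb.

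First, for (1): the empty $G$-set gives $\frac{\ems}{e}$, and for any $\xg$ there is exactly one map $\ems\to X$ and one family of group homomorphisms, so $\Sbb^1(\frac{\ems}{e},\xg)$ is a singleton; hence $\Csc(\frac{\ems}{e},\xg)$ is a singleton and $\frac{\ems}{e}$ is initial. Dually $\Sbb^1(\xg,\pte)$ is a singleton (the unique map $X\to\pt$ together with the unique family of homomorphisms $G\to e$), so $\Csc(\xg,\pte)$ is a singleton and $\pte$ is terminal. Here one does not even need to pass to 2-isomorphism classes, since these hom-sets are already singletons.

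Next, for (2): by Fact \ref{Prop2CoprodVari}, $\xg\am\yh=\frac{\Ind_{\iog}X\am\Ind_{\ioh}Y}{G\times H}$ is a bicoproduct in $\Sbb$, meaning that for every $\zk$ the induced functor $\Sbb(\xg\am\yh,\zk)\to\Sbb(\xg,\zk)\times\Sbb(\yh,\zk)$ is an equivalence of categories; applying $\cl(-)$, which sends equivalences of categories to bijections, yields a bijection $\Csc(\xg\am\yh,\zk)\cong\Csc(\xg,\zk)\times\Csc(\yh,\zk)$ natural in $\zk$, so $\xg\am\yh$ is a coproduct in $\Csc$. Dually, taking $\zk=\pte$ in Fact \ref{Prop2Pullback} gives that $\frac{F}{G\times H}$ is a biproduct of $\xg$ and $\yh$ in $\Sbb$, and the same $\cl(-)$ argument produces a binary product in $\Csc$. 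For (3): Definition \ref{DefNWP} simply \emph{defines} a class of natural weak pullbacks in $\Csc$ to be the images of bipullbacks in $\Sbb$, and Fact \ref{Prop2Pullback} guarantees that such bipullbacks exist for every cospan $\und{\al},\und{\be}$ (choosing representatives $\al,\be$ in $\Sbb$), so this class is nonempty on every cospan and (3) holds essentially by definition; one should also check the routine point that the image of a bipullback square is indeed a weak pullback square in $\Csc$, i.e.\ that the ``existence'' part of the bi-universal property descends (this is immediate) and that 2-cells become identities so the square commutes strictly in $\Csc$.

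The main obstacle, such as it is, lies in (2): one must make sure the claim ``$\cl(-)$ of an equivalence of categories is a bijection, and this is natural in the varying parameter'' is applied correctly, i.e.\ that the pseudonatural comparison functors $\Sbb(\xg\am\yh,\zk)\to\Sbb(\xg,\zk)\times\Sbb(\yh,\zk)$ (resp.\ for the biproduct) are genuinely equivalences for each $\zk$ and that the resulting bijections on isomorphism classes assemble into a natural transformation of functors $\Csc^{\op}\to\Sett$ (resp.\ $\Csc\to\Sett$); given Facts \ref{Prop2CoprodVari} and \ref{Prop2Pullback} this is a formal unwinding with no real content. The final sentence of the corollary, that $\Csc$ then has all finite products and coproducts, follows by the usual induction from the binary case together with the initial and terminal objects from (1).
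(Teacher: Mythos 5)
Your proof is correct and follows exactly the route the paper intends: the corollary is stated without proof there, being treated as immediate from Facts \ref{Prop2CoprodEqui}, \ref{Prop2CoprodVari}, \ref{Prop2Pullback} and Definition \ref{DefNWP}, and your descent argument (bi-universal properties in $\Sbb$ becoming strict or weak universal properties in $\Csc$ after passing to isomorphism classes of 1-cells) is precisely the reasoning those facts are designed to support. The details you supply — the singleton hom-sets for (1), $\cl(-)$ of an equivalence being a bijection natural in $\zk$ for (2), and the existence-only descent giving weak pullbacks for (3) — are all accurate.
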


\begin{rem}\label{Rem112}
Let $\Ksc$ be any category.
To give a functor $F\co\Csc\to\Ksc$ is equivalent to give a strict 2-functor $F\co\Sbb\to\Ksc$, where $\Ksc$ is regarded as a 2-category equipped only with identity 2-cells.
\end{rem}

\begin{rem}\label{RemCandC}
If one define a category $\C\ppr=\finGpd/\text{2-cells}$ in the same manner, then there is an equivalence $\C\ov{\simeq}{\hookrightarrow}\C\ppr$ which preserves the class of natural weak pullbacks, by Remark \ref{Rema2} and Proposition \ref{Bieq}.
\end{rem}

We define the notions of a (semi-)Mackey functor on $\Sbb$ and on $\Csc$, which are the same by Remark \ref{Rem112}.

\begin{dfn}\label{DefSemiMackC}\label{DefSemiMackS}
A {\it semi-Mackey functor} (respectively, a {\it $k$-linear Mackey functor}) $M=(M_{!},M^{\ast})$ on $\Csc$ is a pair of a contravariant functor $M^{\ast}\co\Csc\to\Sett$ (resp. $M^{\ast}\co\Csc\to\kMod$) and a covariant functor $M_{!}\co\Csc\to\Sett$ (resp. $M_{!}\co\Csc\to\kMod$) which satisfies the following.

($\Sett$ denotes the category of sets, where morphisms are maps of sets. $\kMod$ denotes the category of $k$-modules, where morphisms are $k$-linear maps.)
\begin{enumerate}
\item[{\rm (0)}] $M^{\ast}(\xg)=M_{!}(\xg)$ for any object $\xg\in\Ob(\Csc)$. We denote this simply by $M(\xg)$.
\item[{\rm (1)}] [Additivity] For any pair of objects $\xg$ and $\yh$ in $\Csc$, if we take their coproduct
\[ \xg\ov{\und{\ups_X}}{\lra}\xg\am\yh\ov{\und{\ups_Y}}{\lla}\yh \]
in $\Csc$, then the natural map
\[ (M^{\ast}(\und{\ups_X}),M^{\ast}(\und{\ups_Y}))\co M(\xg\am\yh)\to M(\xg)\times M(\yh) \]
is isomorphism. Also, $M(\emptyset)$ is trivial (i.e., the terminal object).
\item[{\rm (2)}] [Mackey condition] For any natural weak pullback
\[
\xy
(-10,7)*+{\wl}="0";
(10,7)*+{\yh}="2";
(-10,-7)*+{\xg}="4";
(10,-7)*+{\zk}="6";
(0,0)*+{\nwp}="10";
{\ar^{\und\delta} "0";"2"};
{\ar_{\und\gamma} "0";"4"};
{\ar^{\und\be} "2";"6"};
{\ar_{\und\al} "4";"6"};
\endxy
\]
in $\Csc$, the following diagram in $\Sett$ (resp. $\kMod$) becomes commutative.
\[
\xy
(-12,7)*+{M(\wl)}="0";
(12,7)*+{M(\yh)}="2";
(-12,-7)*+{M(\xg)}="4";
(12,-7)*+{M(\zk)}="6";
{\ar_{M^{\ast}(\und\delta)} "2";"0"};
{\ar_{M_{!}(\und\gamma)} "0";"4"};
{\ar^{M_{!}(\und\be)} "2";"6"};
{\ar^{M^{\ast}(\und\al)} "6";"4"};
{\ar@{}|\circlearrowright "0";"6"};
\endxy
\]
\end{enumerate}

We can alternatively define a semi-Mackey functor by using $\Sbb$. In the following, when we speak of a 2-functor from $\Sbb$ to $\Sett$ or to $\kMod$, we regard it as a 2-category equipped only with identity 2-cells. (See Remark \ref{Rem112}.)

\medskip

A {\it semi-Mackey functor} $M=(M_{!},M^{\ast})$ on $\Sbb$ is a pair of a contravariant 2-functor $M^{\ast}\co\Sbb\to\Sett$ and a covariant 2-functor $M_{!}\co\Sbb\to\Sett$ which satisfies the following.
\begin{enumerate}
\item[{\rm (0)}] $M^{\ast}(\xg)=M_{!}(\xg)$ for any 0-cell $\xg\in\Sbb^0$. We denote this simply by $M(\xg)$.
\item[{\rm (1)}] [Additivity] For any pair of 0-cells $\xg$ and $\yh$ in $\Sbb$, if we take their bicoproduct
\[ \xg\ov{\ups_X}{\lra}\xg\am\yh\ov{\ups_Y}{\lla}\yh \]
in $\Sbb$, then the natural map
\begin{equation}\label{RCoeffAdd1}
(M^{\ast}(\ups_X),M^{\ast}(\ups_Y))\co M(\xg\am\yh)\to M(\xg)\times M(\yh)
\end{equation}
is isomorphism. Also, $M(\emptyset)$ is trivial.
\item[{\rm (2)}] [Mackey condition] For any bipullback
\[
\xy
(-10,7)*+{\wl}="0";
(10,7)*+{\yh}="2";
(-10,-7)*+{\xg}="4";
(10,-7)*+{\zk}="6";
{\ar^{\delta} "0";"2"};
{\ar_{\gamma} "0";"4"};
{\ar^{\be} "2";"6"};
{\ar_{\al} "4";"6"};
{\ar@{=>}^{\kappa} (2,2);(-2,-2)};
\endxy
\]
in $\Sbb$, the following diagram in $\Sett$ (resp. $\kMod$) becomes commutative.
\begin{equation}\label{RCoeffAdd3}
\xy
(-12,7)*+{M(\wl)}="0";
(12,7)*+{M(\yh)}="2";
(-12,-7)*+{M(\xg)}="4";
(12,-7)*+{M(\zk)}="6";
{\ar_{M^{\ast}(\delta)} "2";"0"};
{\ar_{M_{!}(\gamma)} "0";"4"};
{\ar^{M_{!}(\be)} "2";"6"};
{\ar^{M^{\ast}(\al)} "6";"4"};
{\ar@{}|\circlearrowright "0";"6"};
\endxy
\end{equation}
\end{enumerate}
This is just a paraphrase of the definition using $\Csc$. 
With this view, for any morphism $\und{\al}$ in $\Csc$, we write $M^{\ast}(\und{\al})$ and $M_{!}(\und{\al})$ also as $M^{\ast}(\al)$ and $M_{!}(\al)$.
\end{dfn}

\begin{rem}
In \cite{N_BisetMackey}, we used the notation $(M\uas,M\sas)$ to denote a Mackey functor, where $M\uas$ is contravariant, and $M\sas$ is covariant. In this article, we prefer to use $(M_{!},M\uas)$, because of the following reason.

In analogy with the ordinary Mackey functor theory, a {\it Tambara biset functor} is expected to be defined as a triplet of functors $(M_{!},M\uas,M\sas)$, consisting of an additive Mackey functor $(M_{!},M\uas)$ and a multiplicative semi-Mackey functor $(M\uas,M\sas)$. The additive part $(M_{!},M\uas)$ is expected to become a {\it Green functor} on $\Sbb$. The aim of this article is to give a framework to deal with Green functors, which will serve to the study of this additive part.
\end{rem}

\begin{rem}
If $M=(M_{!},M\uas)$ is a semi-Mackey functor on $\C$, then each of $M\uas$ and $M_{!}$ becomes a functor to $\Mon$, as shown in \cite[Proposition 5.17]{N_BisetMackey}. Thus in the definition of a semi-Mackey functor, we may assume $M\uas,M_{!}$ are functors to $\Mon$, from the beginning. ($\Mon$ denotes the category of commutative monoids with units, where morphisms are monoid homomorphisms preserving units.)
\end{rem}

\begin{rem}
$\MackSk$ is a $k$-linear abelian category.
\end{rem}

\begin{ex}$($\cite[Example 5.4.1]{N_BisetMackey}$)$\label{Abig}
A semi-Mackey functor $\Abig$ on $\C$ is defined as follows.
\begin{enumerate}
\item For any $\xg\in\Ob(\C)$, the set $\Abig(\xg)$ is defined to be the set of isomorphism classes of the slice category $\C/\xg$. Coproducts and natural weak pullbacks in $\C$ give a commutative semi-ring structure on $\Abig(\xg)$.
We denote the isomorphism class of an object $(\ak\ov{\und{\afr}}{\to}\xg)$ by $[\ak\ov{\und{\afr}}{\to}\xg]$.
\item Let $\und{\al}\in\C(\xg,\yh)$ be any morphism.
\begin{itemize}
\item[{\rm (i)}] $\Afr_{\mathrm{big}\, !}(\al)=\und{\al}\ci-\co \Abig(\xg)\to\Abig(\yh)$ is defined by the composition with $\und{\al}$.
\item[{\rm (ii)}] $\Abig\uas(\al)=\xg\un{\yh}{\ti}-\co \Abig(\yh)\to\Abig(\xg)$ is defined by the natural weak pullback by $\und{\al}$.
\end{itemize}
\end{enumerate}
If we compose with the additive completion functor $K_0\co\Mon\to\Ab$, we obtain a $\Zbb$-Mackey functor
\[ \Obig=(\Om_{\mathrm{big\, !}},\Obig\uas)=(K_0\ci\Afr_{\mathrm{big}\,!},K_0\ci\Abig\uas)\in\Ob(\Mack^{\Zbb}(\C)) \]
which we call the {\it bigger Burnside functor}. Furthermore, by composing the coefficient change functor $k\ot_{\Zbb}-\co\Ab\to\kMod$, we obtain a $k$-linear Mackey functor
\[ \Obigk=(\Omega^k_{\mathrm{big}\,!},\Obig^{k\ast})=((k\ot_{\Zbb}-)\ci\Omega_{\mathrm{big}\,!},(k\ot_{\Zbb}-)\ci\Obig\uas)\in\Ob(\Mack^k(\C)). \]
\end{ex}

\begin{rem}\label{PropIndEquiv2}
Let $\xg\in\Ob(\Csc)$ be any object. For any $x\in X$, if we denote its stabilizer by $G_x$ and the orbit by $Gx$, then there is an isomorphism in $\C$
\[ \frac{Gx}{G}\ov{\cong}{\lra}\frac{\pt}{G_x} \]
by Fact \ref{PropIndEquiv} and Remark \ref{RemImmed}.
If we take a set of representatives $x_1,\ldots,x_s\in X$ of $G$-orbits, thus we obtain an isomorphism
\[ \xg=\frac{Gx_1}{G}\am\cdots\am\frac{Gx_s}{G}\simeq\frac{\pt}{G_{x_1}}\am\cdots\am\frac{\pt}{G_{x_s}}. \]
\end{rem}

\begin{dfn}\label{DefSemiMackMorph}
Let $M$ and $N$ be semi-Mackey functors on $\Sbb$. A {\it morphism} $\varphi\co M\to N$ of semi-Mackey functors is a family of maps
\[ \varphi=\{ \varphi_{\xg}\co M(\xg)\to N(\xg) \}_{\xg\in\Sbb^0} \]
compatible with contravariant and covariant parts. Namely, it gives natural transformations
\[ \varphi\co M^{\ast}\tc N^{\ast}\ \ \ \text{and}\ \ \ \varphi\co M_{!}\tc N_{!}. \]
With the usual composition of natural transformations, we obtain the category of semi-Mackey functors denoted by $\SMackS$ or $\SMackC$.

Similarly, a {\it morphism} $\varphi\co M\to N$ of $k$-linear Mackey functors is a family $\varphi=\{ \varphi_{\xg}\}_{\xg\in\Sbb^0}$ of $k$-homomorphisms compatible with contravariant and covariant parts.
We denote the category of $k$-linear Mackey functors by $\MackSk$, or by $\MackCk$.
\end{dfn}

\begin{rem}
We can define Mackey functors on $\finGpd$ in the same way. This kind of generalization of a Mackey functor onto higher categories can be also found in \cite{Barwick}.
By Remark \ref{RemCandC}, the category $\SMack(\finGpd)=\SMack(\C\ppr)$ becomes equivalent to $\SMackC$, and $\Mack^k(\finGpd)=\Mack^k(\C\ppr)$ becomes equivalent to $\MackCk$.
\end{rem}

\begin{dfn}\label{DefStabsurj}$($\cite[Definition 4.1.1]{N_BisetMackey}$)$
A 1-cell $\althh\co\xg\to\yh$ in $\Sbb$ is called {\it stab-surjective}, if the following conditions are satisfied.
\begin{itemize}
\item[{\rm (i)}] $Y=H\al(X)$ holds.
\item[{\rm (ii)}] If $x,x\ppr\in X$ and $h,h\ppr\in H$ satisfy $h\al(x)=h\ppr\al(x\ppr)$, then there exists $g\in G$ which satisfies $x\ppr=gx$ and $h=h\ppr\thh_x(g)$.
\end{itemize}
Stab-surjectivity is stable under isomorphisms (by 2-cells) of 1-cells, and thus we can speak of the stab-surjectivity of a morphism $\und{\al}$ in $\C$.
\end{dfn}

\begin{rem}\label{RemStabsurj}$($\cite[section 4.1]{N_BisetMackey}$)$
The following holds for the stab-surjectivity.
\begin{enumerate}
\item Any isomorphism in $\C$ is stab-surjective.
\item Stab-surjectivity is closed under compositions. Namely, if $\xg\ov{\und{\al}}{\lra}\yh\ov{\und{\be}}{\lra}\zk$ is a sequence of stab-surjective morphisms, then so is $\und{\be}\ci\und{\al}$.
\item Stab-surjectivity is stable under natural weak pullbacks. Namely, if $(\ref{RCoeffAdd2})$ is a natural weak pullback and if $\und{\be}$ is stab-surjective, then so is $\und{\gamma}$.
\end{enumerate}
\end{rem}

\begin{dfn}\label{DefSIm}$($\cite[Definition 4.2.1]{N_BisetMackey}$)$
Let $\althh\co\xg\to\yh$ be any 1-cell in $\Sbb$.
\begin{enumerate}
\item Define $\SIm(\al)=\SIm(\althh)\in\Ob(\Hs)$ by $\SIm(\al)=(H\ti X)/\sim$, where the relation $\sim$ is defined as follows.
\begin{itemize}
\item[-] $(\eta,x),(\eta\ppr,x\ppr)\in H\ti X$ are equivalent if there exists $g\in G$ satisfying
\[ x\ppr=gx\ \ \ \text{and}\ \ \ \eta=\eta\ppr\thh_x(g). \]
\end{itemize}
We denote the equivalence class of $(\eta,x)$ by $[\eta,x]$. The $H$-action on $\SIm(\al)$ is given by $h[\eta, x]=[h\eta,x]$ for any $h\in H$.
We call $\SIm(\al)$ the {\it stabilizerwise image} of $\al=\althh$.
\item Define a map $\ups_{\al}\co X\to\SIm(\al)$ by
\[ \ups_{\al}(x)=[e,x]\quad(\fa x\in X). \]
Then $\frac{\ups_{\al}}{\thh}\co\xg\to \frac{\SIm(\al)}{H}$ is a stab-surjective 1-cell.
\end{enumerate}
\end{dfn}

\begin{dfn}\label{DefSIm2}$($\cite[Proposition 4.2.6]{N_BisetMackey}$)$
For any 1-cell $\althh\co\xg\to\yh$, we have a commutative diagram of 1-cells
\[
\xy
(-20,0)*+{\xg}="0";
(0,8)*+{\frac{\SIm(\al)}{H}}="2";
(0,-6)*+{}="3";
(20,0)*+{\frac{Y}{H}}="4";
{\ar^(0.46){\frac{\ups_{\al}}{\thh}} "0";"2"};
{\ar^(0.54){\frac{\wt{\al}}{H}} "2";"4"};
{\ar@/_0.8pc/_{\althh} "0";"4"};
{\ar@{}|\circlearrowright "2";"3"};
\endxy
\]
where $\wt{\al}$ is defined by $\wt{\al}([\eta,x])=\eta\al(x)$. We call this the {\it $\SIm$-factorization} of $\al$.
If $\und{\al}$ factorizes also as 
\[
\xy
(-20,0)*+{\xg}="0";
(0,8)*+{\frac{S}{H}}="2";
(0,-6)*+{}="3";
(20,0)*+{\frac{Y}{H}}="4";
{\ar^(0.46){\und{\ups}\ppr} "0";"2"};
{\ar^(0.54){\und{a}\ppr} "2";"4"};
{\ar@/_0.8pc/_{\und{\al}} "0";"4"};
{\ar@{}|\circlearrowright "2";"3"};
\endxy
\]
with stab-surjective $\ups\ppr$ and equivariant $a\ppr$, then there exists an $H$-equivariant equivalence $\frac{\om}{H}\co\frac{\SIm(\al)}{H}\ov{\simeq}{\lra}\frac{S}{H}$ in $\Sbb$ satisfying $\und{\ups}\ppr=\und{\om}\ci\und{\ups_{\al}}$ and $\und{\wt{\al}}=\und{a}\ppr\ci\und{\om}$. This ensures the uniqueness of the $\SIm$-factorization, up to isomorphisms in $\C$.
\end{dfn}

\begin{dfn}\label{DefDeflMack}$($\cite[Definition 5.3.1]{N_BisetMackey}$)$
A semi-Mackey functor (respectively, $k$-linear Mackey functor) $M$ on $\C$ is called {\it deflative} if it satisfies
\[ M_{!}(\al)\ci M^{\ast}(\al)=\id_{M(\yh)} \]
for any stab-surjective morphism $\und{\al}\in\C(\xg,\yh)$.
The full subcategory of deflative semi-Mackey functors is denoted by $\SMack_{\dfl}(\Csc)\subseteq\SMackC$. 
Similarly, the full subcategory of deflative $k$-linear Mackey functors is denoted by $\MackdCk\subseteq\MackCk$.
\end{dfn}

\begin{dfn}\label{DefOrdBurn}
Let $\xg$ be any object in $\C$. The set of isomorphism classes of the slice category $\Gs/X$ is equipped with a commutative semi-ring structure, whose addition and multiplication are induced from coproducts and fibered products of finite $G$-sets. We denote this semi-ring by $\Afr_G(X)$. By taking its additive completion, we obtain the ordinary Burnside ring
\[ \Om_G(X)=K_0(\Afr_G(X)). \]
Tensoring with $k$, we define $\Omk_G(X)$ by $\Omk_G(X)=k\ot_{\Zbb}\Om_G(X)$.
\end{dfn}

The following homomorphisms have been obtained in \cite{N_BisetMackey}.
\begin{dfn}\label{DefBurntoBig}(\cite[Proposition 5.4.10]{N_BisetMackey})
For any object $\xg$ in $\C$, we have the following.
\begin{enumerate}
\item A ring homomorphism $\ibf_{\xg}\co\Omk_G(X)\to\Obigk(\xg)$ is obtained by extending the map
\[ \Afr_G(X)\to\Obigk(\xg)\ ;\ [A\ov{p}{\lra}X]\mapsto [\frac{A}{G}\ov{\und{(\frac{p}{G})}}{\lra}\xg] \]
by $k$-linearity.
\item A ring homomorphism $\pbf_{\xg}\co\Obigk(\xg)\to\Omk_G(X)$ is obtained by extending the map induced from $\SIm$-factorizations
\[ \Abig(\xg)\to\Omk_G(X)\ ;\ [\akaxgu]\to [\SIm (\afr)\ov{\wt{\afr}}{\to}X], \]
by $k$-linearity.
\end{enumerate}
These homomorphisms satisfy $\pbf_{\xg}\ci\ibf_{\xg}=\id_{\Omk_G(X)}$. In particular, $\pbf_{\xg}$ is surjective.
\end{dfn}

\begin{dfn}\label{RemStrOrdBurn}
$\Omk\in\Ob(\MackdCk)$ is given by the following.
\begin{enumerate}
\item To any object $\xg$ in $\C$, associate $\Omk(\xg)=\Omk_{G}(X)$.
\item Let $\und{\al}\co \xg\to\yh$ be any morphism in $\C$.
\begin{itemize}
\item[{\rm (i)}] $\Omk_{!}(\al)\co \Omk(\xg)\to\Omk(\yh)$ is defined to be the composition of
\[ \Omk_G(X)\ov{\ibf_{\xg}}{\lra}\Obigk(\xg)\ov{\Omega^k_{\mathrm{big}\,!}(\al)}{\lra}\Obigk(\yh) \ov{\pbf_{\yh}}{\lra}\Omk_H(Y). \]
\item[{\rm (ii)}] $\Om^{k\ast}(\al)\co \Omk(\yh)\to\Omk(\xg)$ is defined to be the composition of
\[ \Omk_H(Y)\ov{\ibf_{\xg}}{\lra}\Obigk(\yh)\ov{\Obig^{k\ast}(\al)}{\lra}\Obigk(\xg)\ov{\pbf_{\xg}}{\lra}\Omk_G(X). \]
\end{itemize}
\end{enumerate}
The fact that $(\Omk_{!},\Om^{k\ast})$ indeed belongs to $\MackCk$, can be checked according to the definition. Or, it also follows from the surjectivity of $\pbf_{\xg}$ and the compatibilities obtained in the proof of the next proposition. Deflativity is obvious from the definition.
\end{dfn}

\begin{prop}\label{PropStrOrdBurn}
The natural surjections
\[ \pbf_{\xg}\co \Obigk(\xg)\to\Omk_G(X)\ \ \quad (\fa\xg\in\Ob(\C)) \]
form a morphism of Mackey functors $\pbf\co\Obigk\to\Omk$ on $\C$. This is an epimorphism in $\MackCk$.
\end{prop}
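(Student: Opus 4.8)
The plan is to verify that the family $\{\pbf_{\xg}\}_{\xg\in\Ob(\C)}$ satisfies the two naturality conditions required of a morphism of Mackey functors --- namely compatibility with the contravariant parts ($M^{\ast}$) and with the covariant parts ($M_{!}$) --- and then to observe that these maps are surjective, which will immediately give the epimorphism claim since $\MackCk$ is $k$-linear abelian.

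First I would check compatibility with the covariant structure. Given $\und{\al}\co\xg\to\yh$ in $\C$, I must show $\Omk_{!}(\al)\ci\pbf_{\xg}=\pbf_{\yh}\ci\Omega^{k}_{\mathrm{big}\,!}(\al)$ as maps $\Obigk(\xg)\to\Omk_H(Y)$. By the very definition of $\Omk_{!}(\al)$ in Definition \ref{RemStrOrdBurn}, the left side equals $\pbf_{\yh}\ci\Omega^{k}_{\mathrm{big}\,!}(\al)\ci\ibf_{\xg}\ci\pbf_{\xg}$, so it suffices to show that $\ibf_{\xg}\ci\pbf_{\xg}$ acts as the identity after applying $\pbf_{\yh}\ci\Omega^{k}_{\mathrm{big}\,!}(\al)$. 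Concretely, for a generator $[\akaxgu]\in\Abig(\xg)$, applying $\ibf_{\xg}\ci\pbf_{\xg}$ replaces it by the class of the $\SIm$-factorization $[\frac{\SIm(\afr)}{G}\ov{\und{(\frac{\wt{\afr}}{G})}}{\to}\xg]$; since the $\SIm$-factorization of $\afr$ is $\frac{\ups_{\afr}}{\thh}$ followed by an equivariant map, and $\frac{\ups_{\afr}}{\thh}$ is stab-surjective with $\und{(\frac{\wt{\afr}}{G})}\ci\und{\ups_{\afr}}=\und{\afr}$ (Definitions \ref{DefSIm}, \ref{DefSIm2}), I expect that $\pbf_{\yh}$ applied to either $\Omega^{k}_{\mathrm{big}\,!}(\al)[\akaxgu]=[\frac{A}{G}\ov{\und{\al}\ci\und{\afr}}{\to}\yh]$ or to the $\SIm$-factorized version yields the same class, because the $\SIm$-factorization of $\und{\al}\ci\und{\afr}$ only depends on $\und{\al}\ci\und{\afr}$, and composing through the stab-surjective $\und{\ups_{\afr}}$ does not change the stab-surjective part of the composite (here I use Remark \ref{RemStabsurj}(2), stability of stab-surjectivity under composition, together with the uniqueness in Definition \ref{DefSIm2}). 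Dually, for the contravariant compatibility $\Om^{k\ast}(\al)\ci\pbf_{\yh}=\pbf_{\xg}\ci\Obig^{k\ast}(\al)$, I would run the analogous argument, reducing to showing $\pbf_{\xg}\ci\Obig^{k\ast}(\al)\ci\ibf_{\yh}\ci\pbf_{\yh}=\pbf_{\xg}\ci\Obig^{k\ast}(\al)$; the point is that the natural weak pullback of a class along $\und{\al}$, followed by the $\SIm$-factorization, is insensitive to pre-replacing the class by its own $\SIm$-factorization, since $\SIm(\afr)\ov{\wt{\afr}}{\to}X$ and $A\ov{\afr}{\to}X$ have, by construction, the same image of orbits under pullback-then-$\SIm$. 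Most of this is bookkeeping with the explicit formulas for $\SIm$, for $\Ind$, for the $G\times H$-action on the bipullback set $F$ in Fact \ref{Prop2Pullback}, and for the equivalence relations defining these sets.

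Once the two squares commute, $\pbf=\{\pbf_{\xg}\}$ is a morphism in $\MackCk$. It is an epimorphism because each component $\pbf_{\xg}$ is surjective --- this is stated in Definition \ref{DefBurntoBig}, following from $\pbf_{\xg}\ci\ibf_{\xg}=\id$ --- and a morphism of Mackey functors that is componentwise surjective is an epimorphism in the abelian category $\MackCk$ (cokernels are computed objectwise).

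The main obstacle I anticipate is the covariant compatibility step: specifically, proving that applying $\pbf_{\yh}$ to the class of the composite $\und{\al}\ci\und{\afr}$ coincides with applying it to the composite of $\und{\al}$ with the equivariant part $\und{(\frac{\wt{\afr}}{G})}$ of the $\SIm$-factorization of $\afr$. This amounts to the assertion that, for computing stabilizerwise images of a composite, one may first replace the first arrow by an equivariant representative of its $\SIm$-factorization without affecting the final result --- a kind of "$\SIm$ only sees the composite up to its stab-surjective front factor" lemma. I would isolate this as a short auxiliary computation: unwind $\SIm(\wt{\afr}\ci\ups_{\afr}$-type composite$)$ using that $\ups_{\afr}$ is stab-surjective and use the uniqueness clause of Definition \ref{DefSIm2} to conclude the two $\SIm$-factorizations agree in $\C$, hence define the same element of $\Omk_H(Y)$. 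The contravariant side should then follow by a symmetric argument using the bipullback description in Fact \ref{Prop2Pullback} and Remark \ref{RemStabsurj}(3).
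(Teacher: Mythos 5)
Your proposal is correct and follows essentially the same route as the paper: both compatibilities are reduced to the fact that the $\SIm$-factorization of a composite is unchanged when a stab-surjective factor is absorbed --- via closure of stab-surjectivity under composition and the uniqueness clause of Definition \ref{DefSIm2} for the covariant square, and via stability of stab-surjectivity under natural weak pullbacks (Remark \ref{RemStabsurj}(3)) applied to the stacked bipullbacks for the contravariant square --- with epimorphicity then immediate from the objectwise surjectivity of $\pbf_{\xg}$. The auxiliary lemma you isolate ($\SIm$ only sees the composite up to its stab-surjective front factor) is precisely the isomorphism $\SIm(\al\ci\frac{\wt{\afr}}{G}\ci\ups_{\afr})\cong\SIm(\al\ci\frac{\wt{\afr}}{G})$ that the paper invokes.
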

\begin{proof}
It suffices to show that $\pbf$ is natural with respect to the covariant and the contravariant parts.
Epimorphicity of $\pbf$ is obvious from the surjectivity of $\pbf_{\xg}\ (\fa\xg\in\Ob(\C))$.

Let $\und{\al}\co\xg\to\yh$ be any morphism.

\smallskip

\noindent {\bf [Compatibility with the covariant parts]}

For any morphism $\und{\afr}\co\ak\to\xg$, take the $\SIm$-factorization of $\afr$
\[
\xy
(-20,0)*+{\ak}="0";
(0,8)*+{\frac{\SIm(\afr)}{G}}="2";
(0,-6)*+{}="3";
(20,0)*+{\xg}="4";
{\ar^(0.46){\ups_{\afr}} "0";"2"};
{\ar^(0.54){\frac{\wt{\afr}}{G}} "2";"4"};
{\ar@/_0.8pc/_{\afr} "0";"4"};
{\ar@{}|\circlearrowright "2";"3"};
\endxy.
\]
Then by definition we have $\ibf_{\xg}\ci\pbf_{\xg}([\akaxgu])=[\frac{\SIm(\afr)}{G}\ov{\und{(\frac{\wt{\afr}}{G})}}{\lra}\xg]$.
By the stab-surjectivity of $\ups_{\afr}$, we obtain an isomorphism of $H$-sets
\[ \SIm(\al\ci\afr)=\SIm(\al\ci\frac{\wt{\afr}}{G}\ci\ups_{\afr})\cong\SIm(\al\ci\frac{\wt{\afr}}{G}), \]
which means
\[ \pbf_{\yh}\ci(\Omega^k_{\mathrm{big}\,!}(\al))([\akaxgu])=\pbf_{\yh}\ci(\Omega^k_{\mathrm{big}\,!}(\al))([\frac{\SIm(\afr)}{G}\ov{\und{(\frac{\wt{\afr}}{G})}}{\lra}\xg]). \]
This implies the commutativity of the following diagram.
\[
\xy
(-20,18)*+{\Obigk(\xg)}="0";
(20,18)*+{\Omk(\xg)}="2";
(2,6)*+{\Obigk(\xg)}="4";
(2,-6)*+{\Obigk(\yh)}="6";
(-20,-18)*+{\Obigk(\yh)}="8";
(20,-18)*+{\Omk(\yh)}="10";
(-13,0)*+{_{\circlearrowright}}="11";
(12,0)*+{_{\circlearrowright}}="12";
{\ar^{\pbf_{\xg}} "0";"2"};
{\ar_{\ibf_{\xg}} "2";"4"};
{\ar_{\Omega^k_{\mathrm{big}\, !}(\al)} "4";"6"};
{\ar_{\Omega^k_{\mathrm{big}\, !}(\al)} "0";"8"};
{\ar^{\Omk_{!}(\al)} "2";"10"};
{\ar_{\pbf_{\yh}} "6";"10"};
{\ar_{\pbf_{\yh}} "8";"10"};
\endxy
\]

\medskip

\noindent {\bf [Compatibility with the contravariant parts]}

For any morphism $\und{\bfr}\co\bl\to\yh$, take the $\SIm$-factorization of $\bfr$
\[
\xy
(-20,0)*+{\bl}="0";
(0,8)*+{\frac{\SIm(\bfr)}{H}}="2";
(0,-6)*+{}="3";
(20,0)*+{\yh}="4";
{\ar^(0.46){\ups_{\bfr}} "0";"2"};
{\ar^(0.54){\frac{\wt{\bfr}}{H}} "2";"4"};
{\ar@/_0.8pc/_{\bfr} "0";"4"};
{\ar@{}|\circlearrowright "2";"3"};
\endxy.
\]
If we take natural weak pullbacks by $\und{\al}$ as
\[
\xy
(-8,14)*+{\blp}="0";
(8,14)*+{\bl}="2";
(-8,0)*+{\frac{S}{H\ppr}}="4";
(8,0)*+{\frac{\SIm(\bfr)}{H}}="6";
(21,0)*+{}="7";
(-8,-14)*+{\xg}="8";
(8,-14)*+{\yh}="10";
(0,7)*+{\nwp}="20";
(0,-7)*+{\nwp}="21";
{\ar^{\und{\al}\pprr} "0";"2"};
{\ar_{\und{\ups}\ppr} "0";"4"};
{\ar^{\und{\ups}_{\bfr}} "2";"6"};
{\ar_{\und{\al}\ppr} "4";"6"};
{\ar_{\und{\sfr}} "4";"8"};
{\ar^{\und{(\frac{\wt{\bfr}}{H})}} "6";"10"};
{\ar_{\und{\al}} "8";"10"};
{\ar@/^2.8pc/^{\und{\bfr}} "2";"10"};
{\ar@{}|\circlearrowright "6";"7"};
\endxy,
\]
then by definition, we have
\[ \Obig^{k\ast}(\al)([\frac{\SIm(\bfr)}{H}\ov{\und{(\frac{\wt{\bfr}}{H})}}{\lra}\yh])=[\frac{S}{H\ppr}\ov{\und{\sfr}}{\lra}\xg] \]
and
\[ \Obig^{k\ast}(\al)([\blbyhu])=[\frac{B\ppr}{L\ppr}\ov{\und{\sfr\ci\ups\ppr}}{\lra}\xg]. \]
Since $\ups\ppr$ is stab-surjective by Remark \ref{RemStabsurj}, there is an isomorphism of $G$-sets
$\SIm(\sfr\ci\ups\ppr)\cong\SIm(\sfr)$,
and thus we have
\[ \pbf_{\xg}\Obig^{k\ast}([\blbyhu])=\pbf_{\xg}\Obig^{k\ast}([\frac{\SIm(\bfr)}{H}\ov{\und{(\frac{\wt{\bfr}}{H})}}{\lra}\yh]), \]
which implies the commutativity of the following diagram.
\[
\xy
(-20,18)*+{\Obigk(\yh)}="0";
(20,18)*+{\Omk(\yh)}="2";
(2,6)*+{\Obigk(\yh)}="4";
(2,-6)*+{\Obigk(\xg)}="6";
(-20,-18)*+{\Obigk(\xg)}="8";
(20,-18)*+{\Omk(\xg)}="10";
(-10,0)*+{_{\circlearrowright}}="11";
(16,0)*+{_{\circlearrowright}}="12";
{\ar^{\pbf_{\yh}} "0";"2"};
{\ar^{\ibf_{\yh}} "2";"4"};
{\ar^{\Obig^{k\ast}(\al)} "4";"6"};
{\ar_{\Obig^{k\ast}(\al)} "0";"8"};
{\ar^{\Om^{k\ast}(\al)} "2";"10"};
{\ar^{\pbf_{\xg}} "6";"10"};
{\ar_{\pbf_{\xg}} "8";"10"};
\endxy
\]
\end{proof}

\begin{prop}\label{PropHHom}
For any deflative Mackey functor $N$ on $\C$, the following holds.
\begin{enumerate}
\item For any $f\in\MackCk(\Obigk,N)$, we have
\[ f_{\xg}\ci\ibf_{\xg}\ci\pbf_{\xg}=f_{\xg} \]
for any $\xg\in\Ob(\C)$.
\item $\pbf$ induces a bijection
\begin{equation}\label{HHom2}
-\ci\pbf\co\MackdCk(\Omk,N)\ov{\cong}{\lra}\MackCk(\Obigk,N).
\end{equation}
\end{enumerate}
\end{prop}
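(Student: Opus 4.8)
The plan for part (1) is to test the identity on a single $k$-module generator of $\Obigk(\xg)$ and then exploit the deflativity of $N$ to absorb the stab-surjective part coming from the $\SIm$-factorization. Since $\Obigk(\xg)$ is $k$-spanned by the classes $[\akaxgu]$ (Example \ref{Abig}), and both $f_{\xg}$ and $f_{\xg}\ci\ibf_{\xg}\ci\pbf_{\xg}$ are $k$-linear, it is enough to prove
\[ f_{\xg}\bigl(\ibf_{\xg}\ci\pbf_{\xg}([\akaxgu])\bigr)=f_{\xg}([\akaxgu]) \]
for every $\und{\afr}\co\ak\to\xg$. I would fix the $\SIm$-factorization $\und{\afr}=\und{(\frac{\wt{\afr}}{G})}\ci\und{\ups_{\afr}}$ with $\ups_{\afr}$ stab-surjective (Definitions \ref{DefSIm}, \ref{DefSIm2}), so that $\ibf_{\xg}\ci\pbf_{\xg}([\akaxgu])=[\frac{\SIm(\afr)}{G}\ov{\und{(\frac{\wt{\afr}}{G})}}{\lra}\xg]$ by Definition \ref{DefBurntoBig}. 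Abbreviating $1_{\zk}=[\zk\ov{\id}{\to}\zk]$, one has $[\akaxgu]=\Omega^k_{\mathrm{big}\,!}(\frac{\wt{\afr}}{G})\bigl(\Omega^k_{\mathrm{big}\,!}(\ups_{\afr})(1_{\ak})\bigr)$ and $[\frac{\SIm(\afr)}{G}\ov{\und{(\frac{\wt{\afr}}{G})}}{\lra}\xg]=\Omega^k_{\mathrm{big}\,!}(\frac{\wt{\afr}}{G})(1_{\frac{\SIm(\afr)}{G}})$, so the claim reduces to showing $f_{\frac{\SIm(\afr)}{G}}\bigl(\Omega^k_{\mathrm{big}\,!}(\ups_{\afr})(1_{\ak})\bigr)=f_{\frac{\SIm(\afr)}{G}}(1_{\frac{\SIm(\afr)}{G}})$.

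The key observation is that, since a bipullback in $\Sbb$ along an identity $1$-cell has its remaining leg an equivalence, the natural weak pullback along any morphism sends a unit to a unit; in particular $\Obig^{k\ast}(\ups_{\afr})(1_{\frac{\SIm(\afr)}{G}})=1_{\ak}$. Hence $\Omega^k_{\mathrm{big}\,!}(\ups_{\afr})(1_{\ak})=\bigl(\Omega^k_{\mathrm{big}\,!}(\ups_{\afr})\ci\Obig^{k\ast}(\ups_{\afr})\bigr)(1_{\frac{\SIm(\afr)}{G}})$, and applying $f_{\frac{\SIm(\afr)}{G}}$ and using that $f$ is a morphism of Mackey functors (naturality in both the covariant and the contravariant parts) I would get
\[ f_{\frac{\SIm(\afr)}{G}}\bigl(\Omega^k_{\mathrm{big}\,!}(\ups_{\afr})(1_{\ak})\bigr)=\bigl(N_{!}(\ups_{\afr})\ci N^{\ast}(\ups_{\afr})\bigr)\bigl(f_{\frac{\SIm(\afr)}{G}}(1_{\frac{\SIm(\afr)}{G}})\bigr). \]
Since $\ups_{\afr}$ is stab-surjective and $N$ is deflative, $N_{!}(\ups_{\afr})\ci N^{\ast}(\ups_{\afr})=\id$, which gives the reduced claim, hence (1).

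For part (2): the assignment $-\ci\pbf$ is well defined since $\pbf$ is a morphism of Mackey functors (Proposition \ref{PropStrOrdBurn}), and it is injective because each $\pbf_{\xg}$ is surjective (Definition \ref{DefBurntoBig}). For surjectivity I would, given $f\in\MackCk(\Obigk,N)$, put $g_{\xg}=f_{\xg}\ci\ibf_{\xg}\co\Omk(\xg)\to N(\xg)$; then $g\ci\pbf=f$ is immediate from (1), as $(g\ci\pbf)_{\xg}=f_{\xg}\ci\ibf_{\xg}\ci\pbf_{\xg}=f_{\xg}$. To check that $g$ is a morphism of Mackey functors, I would substitute the formulas $\Omk_{!}(\al)=\pbf_{\yh}\ci\Omega^k_{\mathrm{big}\,!}(\al)\ci\ibf_{\xg}$ and $\Om^{k\ast}(\al)=\pbf_{\xg}\ci\Obig^{k\ast}(\al)\ci\ibf_{\yh}$ of Definition \ref{RemStrOrdBurn} into $g_{\yh}\ci\Omk_{!}(\al)$ and $g_{\xg}\ci\Om^{k\ast}(\al)$, replace $f_{\yh}\ci\ibf_{\yh}\ci\pbf_{\yh}$ by $f_{\yh}$ and $f_{\xg}\ci\ibf_{\xg}\ci\pbf_{\xg}$ by $f_{\xg}$ using (1), and then invoke the naturality of $f$. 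As $\Omk$ is deflative (Definition \ref{RemStrOrdBurn}) and $N$ is deflative by hypothesis, this $g$ lies in $\MackdCk(\Omk,N)$, which is a full subcategory of $\MackCk$; this establishes surjectivity.

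I expect the main obstacle to be the cancellation in (1): the real content is to recognize $[\akaxgu]$, through its $\SIm$-factorization, as the value of $f$ on $\Omega^k_{\mathrm{big}\,!}(\ups_{\afr})\ci\Obig^{k\ast}(\ups_{\afr})$ applied to a unit with $\ups_{\afr}$ stab-surjective, so that the deflativity of the target $N$ makes it collapse. Once (1) is in hand, the remaining identities in (2) are formal manipulations of the relations recorded in Definition \ref{RemStrOrdBurn} and Proposition \ref{PropStrOrdBurn}.
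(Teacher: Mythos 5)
Your proposal is correct and follows essentially the same route as the paper: in (1) you reduce to generators $[\akaxgu]$, write them as push--pull composites applied to a unit class, transfer this to $N$ by naturality of $f$, and cancel the stab-surjective part of the $\SIm$-factorization by deflativity (the paper performs the cancellation at $\xg$ via $N_{!}(\afr)N^{\ast}(\afr)=N_{!}(\frac{\wt{\afr}}{G})N^{\ast}(\frac{\wt{\afr}}{G})$, while you do it at $\frac{\SIm(\afr)}{G}$ via $\ups_{\afr}$ --- a trivially equivalent bookkeeping choice). Part (2) coincides with the paper's argument, with your verification that $g=f\ci\ibf$ is a morphism of Mackey functors spelled out slightly more explicitly.
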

\begin{proof}
{\rm (1)} It suffices to show
\[ f_{\xg}\ci\ibf_{\xg}\ci\pbf_{\xg}([\akaxgu])=f_{\xg}([\akaxgu]) \]
for any $[\akaxgu]\in\Abig(\xg)$. If we take the $\SIm$-factorization $\afr=\frac{\wt{\afr}}{G}\ci\ups_{\afr}$,
\[ N_{!}(\afr)N\uas(\afr)=N_{!}(\frac{\wt{\afr}}{G})N\uas(\frac{\wt{\afr}}{G}) \]
follows from the deflativity of $N$. Since
\begin{eqnarray*}
&{[}\akaxgu{]}=\Omega^k_{\mathrm{big}\,!}(\afr)\Obig^{k\ast}(\afr)([\xgixg]),&\\
&{[}\frac{\SIm(\afr)}{G}\ov{\und{(\frac{\wt{\afr}}{G})}}{\to}\xg{]}=\Omega^k_{\mathrm{big}\,!}(\frac{\wt{\afr}}{G})\Obig^{k\ast}(\frac{\wt{\afr}}{G})([\xgixg])&
\end{eqnarray*}
hold in $\Obigk(\xg)$, we have
\begin{eqnarray*}
f_{\xg}([\akaxgu])&=&N_{!}(\afr)N\uas(\afr)f_{\xg}([\xgixg])\\
&=&N_{!}(\frac{\wt{\afr}}{G})N\uas(\frac{\wt{\afr}}{G})f_{\xg}([\xgixg])\\
&=&f_{\xg}([\frac{\SIm(\afr)}{G}\ov{\und{(\frac{\wt{\afr}}{G}})}{\to}\xg])\ =\ f_{\xg}\ci\ibf_{\xg}\ci\pbf_{\xg}([\akaxgu]).
\end{eqnarray*}

{\rm (2)} Injectivity follows from the surjectivity of $\pbf_{\xg}$ for each $\xg\in\Ob(\C)$. For any $f\in\MackCk(\Obigk,N)$, if we put
\[ f\ppr_{\xg}=f_{\xg}\ci\ibf_{\xg}\quad(\fa\xg\in\Ob(\C)), \]
then $f_{\xg}=f\ppr_{\xg}\ci\pbf_{\xg}$ follows from {\rm (1)}. By the surjectivity of $\pbf_{\xg}$ and the naturality of $f$, we can show $f\ppr=\{ f\ppr_{\xg}\}_{\xg\in\Ob(\C)}$ is a morphism of Mackey functors. Thus $(\ref{HHom2})$ is surjective.
\end{proof}

A biset functor $B$ is defined to be an additive functor $B\co \Bcal\to\kMod$, from the {\it biset category} $\Bcal$ to $\kMod$.
The biset category which we deal with in this article is the following one. Throughout this article, a biset is always assumed to be finite.
\begin{dfn}$($\cite[Definitions 3.1.1, 3.1.6]{Bouc_biset}$)$
The category $\Bcal$ is defined as follows.
\begin{enumerate}
\item An object in $\Bcal$ is a finite group.
\item For objects $G,H$ in $\Bcal$, consider a set of the isomorphism classes of finite $H$-$G$-bisets. An isomorphism of $H$-$G$-bisets $U\ov{\cong}{\lra}U\ppr$ is a bijective map which preserves the left $H$-action and the right $G$-action.

This forms a commutative monoid with addition $\am$ and unit $\emptyset$, and thus we can take its additive completion $\Bcal(G,H)$. This is the set of morphisms from $G$ to $H$ in $\Bcal$.

An $H$-$G$-biset $U$ is written as ${}_HU_G$.
The composition of two consecutive bisets ${}_{H}U_G$ and ${}_{K}V_H$ is given by
\[ V\ti_HU=(V\ti U)/\sim, \]
where the equivalence relation is defined as
\begin{itemize}
\item[-] $(v,u),(v\ppr,u\ppr)\in V\ti U$ are equivalent if there exists $h\in H$ satisfying $v=v\ppr h$ and $u\ppr=hu$.
\end{itemize}
This defines the composition of morphisms in $\Bcal$, by additivity.
\end{enumerate}
By the abelian group structure on $\Bcal(G,H)$, category $\Bcal$ is preadditive.\end{dfn}
A biset functor is an additive functor $\Bcal\to\kMod$. We denote the category of biset functors by $\BFk=\Add(\Bcal,\kMod)$. Morphisms in $\BFk$ are natural transformations. 
The following has been shown in \cite{N_BisetMackey}.
\begin{fact}\label{ThmBF}(\cite[Theorem 6.3.11]{N_BisetMackey})
There is an equivalence of categories $\MackdCk\simeq\BFk$. This enables us to regard a biset functor as a deflative Mackey functor on $\C$.
\end{fact}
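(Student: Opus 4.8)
The plan is to exhibit $\MackdCk$ as a category of modules over a small preadditive category, and then to identify that category with the ($k$-linearization of the) biset category $\Bcal$. By Remark \ref{PropIndEquiv2} and additivity, a deflative Mackey functor on $\C$ is determined up to isomorphism by its values on the objects $\ptg$, $G$ a finite group; so I would first introduce, for each finite group $G$, the \emph{free deflative Mackey functor} $P_G\in\Ob(\MackdCk)$ on $\ptg$, characterized by a natural isomorphism
\[ \MackdCk(P_G,M)\ov{\cong}{\lra}M(\ptg)\qquad(\fa M\in\Ob(\MackdCk)). \]
Such a $P_G$ is the ($\ptg$-relative) deflative analogue of the bigger Burnside functor: indeed $\Obigk$ is the free non-deflative Mackey functor on $\pte$ (Example \ref{Abig}), its deflative reflection is $\Omk$ (Proposition \ref{PropHHom}), and $P_G$ is obtained in the same way with $\pte$ replaced by $\ptg$, the deflative reflection being computed by $\SIm$-factorizations exactly as in Propositions \ref{PropStrOrdBurn} and \ref{PropHHom}. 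By Remark \ref{PropIndEquiv2} again, $\{P_G\}_G$ is a family of compact projective objects that generates $\MackdCk$.

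The central step is the computation of the $\Hom$ $k$-modules between the generators. By the defining property of $P_G$,
\[ \MackdCk(P_G,P_H)\ \cong\ P_H(\ptg), \]
and I would identify the right-hand side with an ordinary Burnside module: Fact \ref{Prop2Pullback} with $\zk=\pte$ gives $\ptg\ti\pth\cong\frac{\pt}{G\ti H}$, and since $P_H$ is deflative the reflection collapses the bigger value $\Obigk(\frac{\pt}{G\ti H})$ down to $\Omk_{G\ti H}(\pt)$ --- the $k$-linearized Grothendieck group of finite $(G\ti H)$-sets, i.e. of finite $(H,G)$-bisets, which is exactly the morphism module $k\ot_{\Zbb}\Bcal(G,H)$. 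It then remains to check that composition matches: a morphism $P_G\to P_H$ is represented by a span $\ptg\lla\ak\lra\pth$ in $\C$, and composing two such spans by a natural weak pullback over $\pth$ and then normalizing via the $\SIm$-factorization should reproduce, on the nose, Bouc's biset product $\times_H$. Here Fact \ref{Prop2Pullback} identifies the relevant comma object with ``all of $H$'' equipped with the evident two-sided action, and deflativity is precisely what forces the defining relations of $\Bcal$, for instance $\Def\ci\Inf=\id$ (which holds in $\Bcal$ but not in the span category before reflecting).

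Granting the previous paragraph, the full subcategory $\Pcal\se\MackdCk$ on $\{P_G\}_G$ is equivalent, as a preadditive category, to the $k$-linearization of $\Bcal$; since the $P_G$ form a set of compact projective generators of the cocomplete abelian category $\MackdCk$, the standard Morita-type recognition theorem yields
\[ \MackdCk\ \simeq\ \Add(\Pcal,\kMod)\ \simeq\ \Add(\Bcal,\kMod)\ =\ \BFk, \]
carrying a deflative Mackey functor $M$ to the biset functor $G\mapsto M(\ptg)$.

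I expect the main obstacle to be the content of the third paragraph: on one hand, describing the free deflative Mackey functors $P_G$ concretely enough to evaluate $P_H(\ptg)$, which requires an explicit handle on the deflative reflection through $\SIm$-factorizations; on the other hand, verifying that span composition followed by $\SIm$-normalization reproduces Bouc's composition $\times_H$ of bisets, with deflativity bridging the gap between spans in $\C$ and bisets. Everything else is either formal (the Morita argument) or already packaged in the cited results above.
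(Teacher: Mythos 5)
Your strategy is sound and, as far as I can tell, each step can be carried out, but it is a genuinely different packaging from the proof being cited. The present paper does not prove this Fact at all: it quotes \cite[Theorem 6.3.11]{N_BisetMackey}, where the equivalence is exhibited directly as the functor $\Phi$ recalled in Fact \ref{FactEquivN} ($\Phi(M)(G)=M(\ptg)$, $\Phi(M)(U)=M(s_{(U)})$), followed by a hands-on verification that $U\mapsto s_{(U)}$ respects the biset product and that $\Phi$ is fully faithful and essentially surjective (essential surjectivity using the orbit decomposition of Remark \ref{PropIndEquiv2} to extend a biset functor from the objects $\ptg$ to all of $\C$). You instead absorb the full-faithfulness/essential-surjectivity bookkeeping into a Morita-type recognition theorem applied to compact projective generators $P_G$. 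The computational core is the same in both routes and you identify it correctly: one must show $P_H(\ptg)\cong\Omk_{G\ti H}(\pt)\cong k\ot_{\Zbb}\Bcal(G,H)$ (via $\ptg\ti\pth\cong\frac{\pt}{G\ti H}$ and $\SIm$-factorization), and that composing spans by a natural weak pullback over $\pth$ and then deflating reproduces $\times_H$, with deflativity supplying relations such as $\Def\ci\Inf=\id$. What your route buys is that once these two computations are done the rest is formal; what it costs is setting up the generators and their representing property in advance.

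Two loose ends to tie up, both fixable. First, you cannot invoke the deflative reflection wholesale: the left adjoint to $\M_d\hookrightarrow\M$ is only constructed later in this paper (Corollary \ref{CorDefCor}, as $-\ot\Omk$). But for the objects you actually need, take $P_G=\Dbf(\ptg,\Omk)=(\Omk)_{\ptg}$, which is deflative by Lemma \ref{LemDressDefl} and represents evaluation at $\ptg$ on $\M_d$ by the argument of Proposition \ref{PropHHom} transported along the compact closed structure (duality of $\ptg$ with itself). Second, the standard recognition theorem identifies $\MackdCk$ with additive functors on the \emph{opposite} of the full subcategory $\Pcal$ on the $P_G$, so you must match $\Pcal^{\op}$, not $\Pcal$, with the $k$-linearization of $\Bcal$; this works because the involution $\iii$ makes $\Scal$ self-dual and $\Bcal(G,H)\cong\Bcal(H,G)$ via the opposite biset, but the conventions need to be pinned down so that the resulting equivalence is $M\mapsto(G\mapsto M(\ptg))$ with $U$ acting as $M(s_{(U)})$, in agreement with Fact \ref{FactEquivN}.
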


\section{From derivators to Mackey functors}

\begin{dfn}\label{DefPrederiv}(\cite[Definition 1.1]{Groth})
A {\it prederivator} on $\finCat$ is a 2-functor
\[ \Dbb\co\finCat^{\op}\to\CAT, \]
where $\CAT$ denotes the 2-category of categories.

More precisely, $\Dbb$ consists of the following correspondences.
\begin{enumerate}
\item[{\rm (0)}] To any finite category $\Ical$, a category $\Dbb(\Ical)$ is associated.
\item[{\rm (1)}] To any 1-cell $u\in\finCat^1(\Ical,\Jcal)$, a functor $\Dbb(u)=u\uas\co\Dbb(\Jcal)\to\Dbb(\Ical)$ is associated. This strictly preserves compositions and identities.
\item[{\rm (2)}] To any 2-cell $\lam\in\finCat^2(u,v)$, a natural transformation $\Dbb(\lam)=\lam\uas\co u\uas\tc v\uas$ is associated. This strictly preserves compositions and identities.
\end{enumerate}
For the detail, see \cite{Groth}.
\end{dfn}

\begin{rem}
The reason why we use a bit larger 2-category $\finCat$ than $\finGpd$ is, just because $\finGpd$ is not allowed as \lq {\it category of diagrams}' (\cite[Definition 1.12]{Groth}), while $\finCat$ is. The difference is that $\finCat$ contains any finite poset (viewed as a category) as 0-cell.
\end{rem}

\begin{ex}\label{ExYoneda}(\cite[Example 1.2]{Groth}, \cite[D\'{e}finitions 1.11, 1.23]{Cisinski})
Let $\A$ be any category. Then a prederivator $\Ycal_{\A}$ on $\finCat$ is defined as follows.
\begin{enumerate}
\item[{\rm (0)}] For any $\Ical\in\finCat^0$, let $\Ycal_{\A}(\Ical)=\A^{\Ical}$ be the functor category.
\item[{\rm (1)}] For any $u\in\finCat^1(\Ical,\Jcal)$, define $u\uas=-\ci u\co \A^{\Jcal}\to\A^{\Ical}$ by the composition with $u$.
\item[{\rm (2)}] For any $\lam\in\finCat^2(u,v)$, define $\lam\uas=-\ci \lam\co u\uas\tc v\uas$ by the horizontal composition with $\lam$.
\end{enumerate}
\end{ex}

\begin{dfn}\label{DefDeriv}(\cite[Definition 1.5]{Groth})
Let $\Dbb\co\finCat^{\op}\to\CAT$ be a prederivator. It is called a {\it derivator} if it satisfies the following condition. 
\begin{enumerate}
\item[{\rm (Der1)}] The empty category $\ems$ satisfies $\Dbb(\ems)\simeq\ebf$. For any $\Ical,\Jcal\in\finCat^0$,
\[ (i^{\ast},j^{\ast})\co\Dbb(\Ical\am\Jcal)\ov{\simeq}{\lra}\Dbb(\Ical)\ti\Dbb(\Jcal) \]
is an equivalence of categories, where
\[ \Ical\ov{i}{\lra}\Ical\am\Jcal\ov{j}{\lla}\Jcal \]
is the coproduct of categories $\Ical,\Jcal$.
\item[{\rm (Der2)}] Let $\Jcal\in\finCat^0$ be any 0-cell in $\finCat$. For any morphism $f$ in $\Dbb(\Jcal)$, it is an isomorphism in $\Dbb(\Jcal)$ if and only if $c_X^{\ast}(f)$ is isomorphism in $\Dbb(\ebf)$ for any $X\in\Ob(\Jcal)$. Here, $c_X\co \ebf\to\Jcal$ denotes the constant functor onto $X$.
\item[{\rm (Der3)}] For any $u\in\finCat^1(\Jcal,\Kcal)$, the functor $u^{\ast}\co\Dbb(\Kcal)\to\Dbb(\Jcal)$ has a left adjoint $u_{!}$ and a right adjoint $u_{\ast}$.
\item[{\rm (Der4)}] For any comma square
\begin{equation}\label{CommaSq}
\xy
(-7,6)*+{\Ical/\Jcal}="0";
(7,6)*+{\Ical}="2";
(-7,-6)*+{\Jcal}="4";
(7,-6)*+{\Kcal}="6";
{\ar^{p_{\Ical}} "0";"2"};
{\ar_{p_{\Jcal}} "0";"4"};
{\ar^{I} "2";"6"};
{\ar_{J} "4";"6"};
{\ar@{=>}^{\lam} (2,2);(-2,-2)};
\endxy
\end{equation}
in $\finCat$, the natural transformations
\[
\xy
(-9,7)*+{\Dbb(\Ical/\Jcal)}="0";
(9,7)*+{\Dbb(\Ical)}="2";
(-9,-7)*+{\Dbb(\Jcal)}="4";
(9,-7)*+{\Dbb(\Kcal)}="6";
{\ar_(0.4){p_{\Ical}^{\ast}} "2";"0"};
{\ar_{(p_{\Jcal})_{!}} "0";"4"};
{\ar^{I_{!}} "2";"6"};
{\ar^{J^{\ast}} "6";"4"};
{\ar@{=>}^{\lam_{!}} (-2,2);(2,-2)};
\endxy
\quad,\quad
\xy
(-9,7)*+{\Dbb(\Ical/\Jcal)}="0";
(9,7)*+{\Dbb(\Ical)}="2";
(-9,-7)*+{\Dbb(\Jcal)}="4";
(9,-7)*+{\Dbb(\Kcal)}="6";
{\ar^(0.56){(p_{\Ical})_{\ast}} "0";"2"};
{\ar^{p_{\Jcal}^{\ast}} "4";"0"};
{\ar_{I^{\ast}} "6";"2"};
{\ar_{J_{\ast}} "4";"6"};
{\ar@{=>}^{\lam_{\ast}} (2,-2);(-2,2)};
\endxy
\]
are isomorphisms. Here, $\lam_{!}$ and $\lam_{\ast}$ are defined by using the units and counits of the adjoint functors, by composing the following sequences of natural transformations, respectively.
\begin{eqnarray*}
&(p_{\Jcal})_{!}\ci p_{\Ical}^{\ast}\tc%
(p_{\Jcal})_{!}\ci p_{\Ical}^{\ast}\ci I^{\ast}\ci I_{!}\ov{(p_{\Jcal})_{!}\ci\lam^{\ast}\ci I_{!}}{\Longrightarrow}%
(p_{\Jcal})_{!}\ci p_{\Jcal}^{\ast}\ci J^{\ast}\ci I_{!}\tc%
J^{\ast}\ci I_{!},&\\
&I^{\ast}\ci J_{\ast}\tc%
(p_{\Ical})_{\ast}\ci p_{\Ical}^{\ast}\ci I^{\ast}\ci J_{\ast}\ov{(p_{\Ical})_{\ast}\ci\lam^{\ast}\ci J_{\ast}}{\Longrightarrow}%
(p_{\Ical})_{\ast}\ci p_{\Jcal}^{\ast}\ci J^{\ast}\ci J_{\ast}\tc%
(p_{\Ical})_{\ast}\ci p_{\Jcal}^{\ast}.&
\end{eqnarray*}
\end{enumerate}
\end{dfn}

\begin{rem}
In the above definition, {\rm (Der4)} is replaced by an equivalent (under the assumption of {\rm (Der1),(Der2),(Der3)}) condition from the original one (\cite[Proposition 1.26]{Groth}).
\end{rem}

\begin{fact}(\cite[Example 1.12]{Cisinski})
If a category $\A$ is finitely complete and finitely cocomplete, then
\[ \Ycal_{\A}\co\finCat^{\op}\to\CAT \]
becomes a derivator.
\end{fact}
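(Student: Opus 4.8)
The plan is to verify the four derivator axioms (Der1)--(Der4) for $\Ycal_{\A}$; the first three are essentially formal, so I keep them brief. For (Der1): the category $\A^{\ems}$ has a single object and a single morphism, so $\Ycal_{\A}(\ems)=\ebf$; and for finite categories $\Ical,\Jcal$ a functor $\Ical\am\Jcal\to\A$ is precisely a pair of functors $\Ical\to\A$, $\Jcal\to\A$, and likewise for natural transformations, so $(i\uas,j\uas)\co\A^{\Ical\am\Jcal}\to\A^{\Ical}\ti\A^{\Jcal}$ is even an isomorphism of categories. For (Der2): a morphism $\vp\co F\tc F\ppr$ of $\A^{\Jcal}$ is invertible there if and only if each component $\vp_X\co F(X)\to F\ppr(X)$ is invertible in $\A$, and $c_X\uas(\vp)=\vp_X$ under the identification $\A^{\ebf}=\A$; this is exactly the required pointwise criterion.

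For (Der3), I would invoke the classical pointwise construction of Kan extensions: for $u\co\Jcal\to\Kcal$ the restriction functor $u\uas\co\A^{\Kcal}\to\A^{\Jcal}$ admits a left adjoint $u_{!}$ (resp.\ a right adjoint $u_{\ast}$) as soon as, for every $k\in\Ob(\Kcal)$, the colimit over the comma category $(u\downarrow k)$ of the diagram $(a,u(a)\to k)\mapsto F(a)$ exists in $\A$ (resp.\ the limit over $(k\downarrow u)$ exists). Since $\Jcal$ and $\Kcal$ are finite, these comma categories are finite, so the required (co)limits exist by the finite cocompleteness (resp.\ finite completeness) of $\A$; hence $u_{!}$ and $u_{\ast}$ exist, and are given by the usual pointwise formulas. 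Applying this to the projections $p_{\Ical},p_{\Jcal}$ out of the (also finite) comma category $\Ical/\Jcal$ produces the functors $(p_{\Jcal})_{!}$ and $(p_{\Ical})_{\ast}$ entering (Der4).

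The substantial point is (Der4). Given a comma square $(\ref{CommaSq})$, I would show $\lam_{!}$ is invertible by evaluating both functors $\A^{\Ical}\to\A^{\Jcal}$ at an object $j\in\Ob(\Jcal)$ and unwinding the pointwise formulas. On one side, $((p_{\Jcal})_{!}\,p_{\Ical}\uas F)(j)$ is the colimit of $(i,j_0,\iota,\psi)\mapsto F(i)$ over the comma category $(p_{\Jcal}\downarrow j)$, whose objects are tuples $(i\in\Ob(\Ical),\,j_0\in\Ob(\Jcal),\,\iota\co I(i)\to J(j_0),\,\psi\co j_0\to j)$; on the other side, $(J\uas I_{!}F)(j)=(I_{!}F)(J(j))$ is the colimit of $(i,\iota)\mapsto F(i)$ over $(I\downarrow J(j))$, with objects $(i\in\Ob(\Ical),\,\iota\co I(i)\to J(j))$. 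The forgetful functor $q\co(p_{\Jcal}\downarrow j)\to(I\downarrow J(j))$, $(i,j_0,\iota,\psi)\mapsto(i,\,J(\psi)\ci\iota)$, has the obvious section $\sig$ given by $j_0=j$, $\psi=\id_j$, and a direct computation gives $q\dashv\sig$. Being a right adjoint, $\sig$ is a final functor, and since the diagram on $(I\downarrow J(j))$ is the restriction along $\sig$ of the diagram on $(p_{\Jcal}\downarrow j)$, the two colimits coincide; one then checks, by threading the definition of $\lam_{!}$ through the units and counits of $I_{!}\dashv I\uas$ and $(p_{\Jcal})_{!}\dashv p_{\Jcal}\uas$, that $\lam_{!}$ at $j$ is exactly this identification. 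The statement for $\lam_{\ast}$ is dual: evaluating at $i\in\Ob(\Ical)$, both $(I\uas J_{\ast}G)(i)$ and $((p_{\Ical})_{\ast}\,p_{\Jcal}\uas G)(i)$ become limits, over $(I(i)\downarrow J)$ and over $(i\downarrow p_{\Ical})$ respectively, and the section of the forgetful functor between them is now a left adjoint, hence an initial functor, which identifies the two limits compatibly with $\lam_{\ast}$.

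The hard part is the bookkeeping in (Der4): describing the comma categories of the projections $p_{\Ical},p_{\Jcal}$ correctly, recognizing the section of each comparison functor as an adjoint---so that it is (co)final and the two (co)limits can be compared---and, most delicately, verifying that the base-change transformation $\lam_{!}$ (respectively $\lam_{\ast}$) assembled abstractly from units and counits agrees with the canonical identification furnished by (co)finality, rather than being merely some morphism between objects that happen to be isomorphic.
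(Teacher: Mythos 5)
Your argument is correct; note that the paper itself gives no proof of this statement, recording it only as a Fact cited from Cisinski, and your verification of (Der1)--(Der4) via pointwise Kan extensions over finite comma categories, together with the finality (resp.\ initiality) of the right-adjoint (resp.\ left-adjoint) sections of the comparison functors in (Der4), is essentially the standard argument found in the cited references. The one point you rightly flag as delicate --- that the abstractly assembled $\lam_{!}$ coincides with the identification coming from finality --- is indeed where the real work lies, and your adjunction $q\dashv\sig$ is set up correctly to handle it.
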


Now we associate a semi-Mackey functor to any derivator on $\finCat$. In fact, we only need a 2-functor $\finGpd^{\op}\to\CAT$ satisfying the following conditions\footnote{These conditions are similar to {\rm (Der1),(Der3g),(Der4g)} for {\it d\'{e}rivateur faible \`{a} gauche} in \cite[D\'{e}finition 1.11]{Cisinski}. However, we remark that {\rm (iii)} is stronger than {\rm (Der4g)}, since we do not assume {\rm (Der2)}.} {\rm (i),(ii),(iii)}. Obviously, if we restrict a derivator $\Dbb\co\finCat^{\op}\to\CAT$ to $\finGpd\se\finCat$, these conditions are satisfied.
\begin{prop}\label{PropDerivtoMack}
Let $\Dbb\co\finGpd^{\op}\to\CAT$ be a strict 2-functor. Suppose $\Dbb$ satisfies the following properties.
\begin{itemize}
\item[{\rm (i)}] The empty category $\ems$ satisfies $\Dbb(\ems)\simeq\ebf$. For any $\Ical,\Jcal\in\finGpd^0$,
\[ (i^{\ast},j^{\ast})\co\Dbb(\Ical\am\Jcal)\ov{\simeq}{\lra}\Dbb(\Ical)\ti\Dbb(\Jcal) \]
is an equivalence of categories, where
\[ \Ical\ov{i}{\lra}\Ical\am\Jcal\ov{j}{\lla}\Jcal \]
is the coproduct of categories $\Ical,\Jcal$.
\item[{\rm (ii)}] For any $u\in\finGpd^1(\Jcal,\Kcal)$, the functor $u^{\ast}\co\Dbb(\Kcal)\to\Dbb(\Jcal)$ has a left adjoint $u_{!}$.
\item[{\rm (iii)}] For any comma square $(\ref{CommaSq})$ in $\finGpd$, the natural transformation $\lambda_{!}\co (p_{\Jcal})_{!}\ci p_{\Ical}\uas\tc J\uas\ci I_{!}$ is isomorphism\footnote{As the proof suggests, the existence of an isomorphism $(p_{\Jcal})_{!}\ci p_{\Ical}\uas\cong J\uas\ci I_{!}$ (not specifying the isomorphism used) is enough to show Proposition \ref{PropDerivtoMack}.}.
\end{itemize}

Then, we obtain a semi-Mackey functor $(M_{!},M^{\ast})$ on $\C$ in the following way.
\begin{itemize}
\item[-] For any $\xg\in\Ob(\C)$, put
\[ M(\xg)=\cl(\Dbb(\el(\xg))). \]
\item[-] For any $\und{\al}\in\C(\xg,\yh)$, put
\[ M^{\ast}(\und{\al})=\cl(\el(\al)^{\ast}),\quad M_{!}(\und{\al})=\cl(\el(\al)_{!}). \]
\end{itemize}
\end{prop}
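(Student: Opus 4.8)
The plan is to verify, in turn, the three defining properties of a semi-Mackey functor on $\C$ from Definition \ref{DefSemiMackC}: (0) the agreement $M^{\ast}(\xg)=M_{!}(\xg)$, (1) additivity, and (2) the Mackey condition. Property (0) is immediate: both are defined to be $\cl(\Dbb(\el(\xg)))$. The real content is checking that $M^{\ast}$ is a (contravariant) functor, that $M_{!}$ is a (covariant) functor, and then (1) and (2). That $M^{\ast}=\cl\ci\Dbb\ci\el$ is a contravariant functor $\C\to\Sett$ follows from Remark \ref{Rem112} together with the fact that $\el\co\Sbb\to\finGpd$ is a strict 2-functor (Corollary \ref{Cor2Ftr}) and $\Dbb\co\finGpd^{\op}\to\CAT$ is a strict 2-functor, so $\Dbb\ci\el$ descends to a functor on $\C$, and then postcomposing with $\cl\co\CAT\to\Sett$ (isomorphism classes of objects) is functorial. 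For $M_{!}$ one first needs to know $\el(\al)_{!}$ exists; this is hypothesis (ii) applied to the 1-cell $\el(\al)$ in $\finGpd$. Functoriality of $M_{!}$ is where the first genuine issue arises: left adjoints are only unique up to canonical isomorphism, so $\el(\be)_{!}\ci\el(\al)_{!}$ and $\el(\be\ci\al)_{!}=(\el(\be)\ci\el(\al))_{!}$ agree only up to natural isomorphism, not on the nose. The point is that after applying $\cl$, isomorphic functors induce the same map on isomorphism classes, so $\cl(\el(\be)_{!})\ci\cl(\el(\al)_{!})=\cl(\el(\be\ci\al)_{!})$; and one must also check $\cl(\id_{!})=\id$, which holds because $\id^{\ast}=\id$ has $\id$ as a left adjoint. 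Also one must check that if $\und{\al}=\und{\al'}$ in $\C$, i.e. $\al\cong\al'$ via a 2-cell, then $\cl(\el(\al)_{!})=\cl(\el(\al')_{!})$: a 2-cell $\ep\co\al\tc\al'$ gives, via $\Dbb$, a natural isomorphism $\el(\al)^{\ast}\cong\el(\al')^{\ast}$, hence their left adjoints are naturally isomorphic, hence equal after $\cl$.

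**Next I would check additivity (1).** For a pair of objects $\xg,\yh$, Fact \ref{Prop2CoprodVari} and Remark \ref{RemCCCCC} show $\el(\xg\am\yh)\simeq\el(\xg)\am\el(\yh)$, the coproduct of groupoids. Hypothesis (i) then gives an equivalence $\Dbb(\el(\xg\am\yh))\simeq\Dbb(\el(\xg))\ti\Dbb(\el(\yh))$, and applying $\cl$ turns this into a bijection $M(\xg\am\yh)\cong M(\xg)\ti M(\yh)$; one should observe that under $\cl$ this bijection is exactly the map $(M^{\ast}(\und{\ups_X}),M^{\ast}(\und{\ups_Y}))$, because $i^{\ast},j^{\ast}$ in (i) are precisely $\el(\ups_X)^{\ast},\el(\ups_Y)^{\ast}$ up to the identification of Remark \ref{RemCCCCC}. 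Triviality of $M(\ems)$ follows from $\Dbb(\ems)\simeq\ebf$ and $\cl(\ebf)=\pt$. A minor care point: additivity in Definition \ref{DefSemiMackC}(1) is phrased via $M^{\ast}$, so I only need the equivalence of (i) and the identification of the comparison map; $M_{!}$-additivity is automatic once $M_{!}$ is shown to be a coproduct-preserving functor, which also follows since left adjoints preserve colimits, though strictly we only need what the definition demands.

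**The main obstacle is the Mackey condition (2),** and this is where hypothesis (iii) enters. Given a bipullback square in $\Sbb$ as in \eqref{Diag18_0}, Remark \ref{Rema2} tells us $\el$ sends it to a comma square in $\finGpd$ of the shape \eqref{CommaSq}. Unwinding which corner is which: the bipullback $\wl$ over $\xg\ov{\al}{\to}\zk\ov{\be}{\lla}\yh$ maps under $\el$ to $\el(\al)/\el(\be)$ (or $\el(\be)/\el(\al)$, depending on orientation), and the two legs become the comma projections. Hypothesis (iii), the Beck--Chevalley-type isomorphism $\lam_{!}\co (p_{\Jcal})_{!}\ci p_{\Ical}^{\ast}\cong J^{\ast}\ci I_{!}$, after applying $\cl$ and reading off the appropriate sides, says exactly that $M_{!}(\delta)\ci M^{\ast}(\gamma)\ \big(\text{resp. } M^{\ast}(\gamma)\ci M_{!}(\delta)\big)$ equals $M^{\ast}(\al)\ci M_{!}(\be)$ around the square \eqref{RCoeffAdd3}. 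I expect the only real work here is bookkeeping: matching the placement of $I,J,p_{\Ical},p_{\Jcal}$ in \eqref{CommaSq} against the placement of $\al,\be,\gamma,\delta$ in the bipullback square so that the mates line up correctly, and confirming that ``isomorphism of natural transformations'' survives the passage through $\cl$ to become ``equality of maps of sets''. As the footnote to (iii) already notes, we do not even need the specified mate to be an isomorphism — an abstract isomorphism $(p_{\Jcal})_{!}\ci p_{\Ical}^{\ast}\cong J^{\ast}\ci I_{!}$ suffices, since after $\cl$ any such isomorphism forces the two composites to coincide. Finally I would remark that well-definedness of (2) with respect to the choice of bipullback representing a given natural weak pullback in $\C$ is automatic, since any two bipullbacks of the same cospan are equivalent in $\Sbb$ and $\cl$ identifies equivalent data.
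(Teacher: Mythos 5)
Your proposal is correct and follows essentially the same route as the paper: well-definedness of $M^{\ast}$ and $M_{!}$ on classes $\und{\al}$ via the 2-functoriality of $\Dbb\ci\el$ and uniqueness of left adjoints up to isomorphism (which $\cl$ collapses to equality), functoriality of $M_{!}$ from $\el(\be\ci\al)_{!}\cong\el(\be)_{!}\ci\el(\al)_{!}$, additivity from (i) together with Remark \ref{RemCCCCC}, and the Mackey condition from (iii) together with Remark \ref{Rema2}. Your additional remarks (the identification of the comparison map with $(M^{\ast}(\und{\ups_X}),M^{\ast}(\und{\ups_Y}))$, and independence of the choice of bipullback) are correct elaborations of points the paper leaves implicit.
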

\begin{proof}
If 1-cells satisfy $\und{\al}=\und{\al}\ppr$, then we have natural isomorphism $\el(\al)\cong\el(\al\ppr)$. It follows $\Dbb(\el(\al))\cong\Dbb(\el(\al\ppr))$, namely, $\el(\al)\uas\cong\el(\al\ppr)\uas$, and thus $\cl(\el(\al)\uas)=\cl(\el(\al\ppr)\uas)$. Thus $M\uas(\und{\al})$ is independent from a representative 1-cell $\al$. Obviously, $M\uas\co\C^{\op}\to\Sett$ becomes a functor.

From $\el(\al)\uas\cong\el(\al\ppr)\uas$, it also follows $\el(\al)_{!}\cong\el(\al\ppr)_{!}$. Similarly, this shows $M_{!}(\und{\al})$ is independent from a representative $\al$.
For any sequence of 1-cells $\xg\ov{\al}{\lra}\yh\ov{\be}{\lra}\zk$ in $\Sbb$,
\[ \el(\be\ci\al)_{!}\cong(\el(\be)\ci\el(\al))_{!}\cong\el(\be)_{!}\ci\el(\al)_{!} \]
follows from $\el(\be\ci\al)\uas=(\el(\be)\ci\el(\al))\uas=\el(\al)\uas\ci\el(\be)\uas$.
Thus we obtain $M_{!}(\und{\be}\ci\und{\al})=M_{!}(\und{\be})\ci M_{!}(\und{\al})$. We can also confirm $M_{!}(\id_{\xg})=\id$ for any $\xg\in\Ob(\C)$, in a similar way. Thus $M_{!}\co\C\to\Sett$ becomes a functor.

Then the additivity of $M\uas$ follows from condition {\rm (i)} and Remark \ref{RemCCCCC}. The Mackey condition follows from {\rm (iii)} and Remark \ref{Rema2}. Thus $(M_{!},M\uas)$ becomes a semi-Mackey functor as in Definition \ref{DefSemiMackC}.
\end{proof}

\begin{rem}
The same proof works if we replace left adjoint in {\rm (ii)} by right adjoint, and modify {\rm (iii)} accordingly.
\end{rem}

\begin{ex}\label{ExSemiBurn}
Let $\sett$ be the category of finite sets, whose morphisms are maps of sets.
Since it is finitely complete and finitely cocomplete, the represented prederivator $\Ycal_{\sett}\co\finCat^{\op}\to\CAT$ becomes a derivator. Applying Proposition \ref{PropDerivtoMack} to $\Ycal_{\sett}$, we obtain a semi-Mackey functor $M=(M_{!},M\uas)$, which satisfies the following.
\begin{enumerate}
\item For any $\xg\in\Sbb^0$, remark that there is an equivalence of categories
\[ \Ycal_{\sett}(\xg)=\Fun(\el(\xg),\sett)\ov{\simeq}{\lra}\Gs/X, \]
where the codomain denotes the slice category of $\Gs$ over $X$. (See \cite[P.24 Example {\rm (iv)}]{MM} for the case $X=\pt$.) Indeed, this equivalence is given in the following way.
\begin{itemize}
\item[{\rm (i)}] For any $(A\ov{p}{\to}X)\in\Ob(\Gs/X)$, define $F\in\Ob(\Fun(\el(\xg),\sett))$ by
\begin{itemize}
\item[-] for any object $x\in X=\Ob(\el(\xg))$, put $F(x)=p\iv(x)\in\Ob(\sett)$,
\item[-] for any morphism $g\in\el(\xg)(x,x\ppr)$, let $F(g)\co p\iv(x)\to p\iv(x\ppr)$ be the map given by the left action of $g$.
\end{itemize}
\item[{\rm (ii)}] Let $(A\ov{p}{\to}X),(A\ppr\ov{p\ppr}{\to}X)\in\Ob(\Gs/X)$ be any pair of objects, and let $F,F\ppr$ be the corresponding objects in $\Fun(\el(\xg),\sett)$ by {\rm (i)}. For any morphism $\phi\co(A\ov{p}{\to}X)\to(A\ppr\ov{p\ppr}{\to}X)$ in $\Gs/X$, a natural transformation $\varphi\co F\tc F\ppr$ is given by
\[ \varphi_x=\phi|_{p\iv(x)}\co p\iv(x)\to p^{\prime-1}(x) \]
for any $x\in X=\Ob(\el(\xg))$.
\end{itemize}
This induces an isomorphism $M(\xg)\cong \Afr_G(X)$. In particular if $X=\pt$, we have $M(\ptg)\cong\Afr_{G}(\pt)$.
\item If $f\co G\to H$ is a homomorphism between finite groups, then there is a natural isomorphism
\[
\xy
(-12,7)*+{\Ycal_{\sett}(\el(\ptg))}="0";
(12,7)*+{\Gs}="2";
(-12,-7)*+{\Ycal_{\sett}(\el(\pth))}="4";
(12,-7)*+{\Hs}="6";
{\ar^{\simeq} "0";"2"};
{\ar^{\el(\frac{\ttt}{f})\uas} "4";"0"};
{\ar_{f\uas} "6";"2"};
{\ar_{\simeq} "4";"6"};
{\ar@{}|{\cong} "0";"6"};
\endxy,
\]
where $f\uas\co\Hs\to\Gs$ is the functor given by the pullback of the action along $f$. Here $\ttt$ denotes the unique map.
\end{enumerate}
\end{ex}

\begin{rem}
For a general 1-cell $\althh$, we can give a functor $(\althh)\uas\co\Hs/Y\to\Gs/X$ as in \cite[Definition 2.2]{N_DerTam}.
We can also show that the semi-Mackey functor obtained in Example \ref{ExSemiBurn} is isomorphic to the semi-Burnside functor on $\Sbb$ (\cite[Definition 6.7]{N_DerTam}).
\end{rem}

\begin{ex}
In a similar way as in Proposition \ref{PropDerivtoMack}, we obtain the following. Let $\Cbb$ be the field of complex numbers, and let $\Cmod$ be the category of finite dimensional $\Cbb$-vector spaces, where morphisms are $\Cbb$-linear homomorphisms.
Let $\Dbb=\Ycal_{\Cmod}\co\finCat^{\op}\to\CAT$ be the derivator represented by $\Cmod$. 
Then, we obtain a $\Zbb$-Mackey functor $(M_{!},M^{\ast})$ on $\C$ in the following way.
\begin{enumerate}
\item For any $\xg\in\Ob(\C)$, put
\[ M(\xg)=K_0(\Dbb(\el(\xg))). \]
Remark that we have an equivalence $\Dbb(\el(\ptg))\simeq \Cbb G\mathrm{mod}$, where the codomain $\Cbb G\mathrm{mod}$ denotes the category of finite $\Cbb G$-modules. Thus for any $G$, we have $M(\ptg)\cong K_0(\Cbb G\mathrm{mod})$.
\item Let $\und{\al}\in\C(\xg,\yh)$ be any morphism.
\begin{itemize}
\item[{\rm (i)}] If $\und{\al}$ is an isomorphism, then $\el(\al)\uas$ and $\el(\al)_{!}$ are equivalences. Thus they induce homomorphisms
\[ M\uas(\und{\al})=K_0(\el(\al)\uas),\quad M_{!}(\und{\al})=K_0(\el(\al)_{!}) \]
on Grothendieck groups, which are mutually inverses.
\item[{\rm (ii)}] If $X=\pt$ and $Y=\pt$, then $\al=\frac{\ttt}{f}$ for some group homomorphism $f\co G\to H$.
Then the following diagram is commutative up to natural isomorphism.
\[
\xy
(-12,7)*+{\Dbb(\el(\ptg))}="0";
(12,7)*+{\Cbb G\mathrm{mod}}="2";
(-12,-7)*+{\Dbb(\el(\pth))}="4";
(12,-7)*+{\Cbb H\mathrm{mod}}="6";
{\ar^{\simeq} "0";"2"};
{\ar^{\el(\frac{\ttt}{f})\uas} "4";"0"};
{\ar_{\Hom_{\Cbb H}({}_{\Cbb H}\Cbb H_{\Cbb G},-)} "6";"2"};
{\ar_{\simeq} "4";"6"};
{\ar@{}|{\cong} "0";"6"};
\endxy
\]
Here ${}_{\Cbb H}\Cbb H_{\Cbb G}$ is a $\Cbb H$-$\Cbb G$-bimodule, whose right action is induced from $f$. The functor $\Hom_{\Cbb H}({}_{\Cbb H}\Cbb H_{\Cbb G},-)$ is exact. Moreover, since the characteristic of $\Cbb$ is zero, its left adjoint ${}_{\Cbb H}\Cbb H_{\Cbb G}\ot_{\Cbb G}-$ is also exact (\cite[1.1.4]{Bouc_biset}). Thus we obtain $M^{\ast}(\und{\al})=K_0(\el(\al)^{\ast})$ and $M_{!}(\und{\al})=K_0(\el(\al)_{!})$ also in this case.
\item[{\rm (iii)}] For a general case, we may take isomorphisms
\[ \zeta^{(\xg)}\co\xg\ov{\cong}{\lra}\frac{\pt}{G_1}\am\cdots\am\frac{\pt}{G_s},\ \ %
\zeta^{(\yh)}\co\yh\ov{\cong}{\lra}\frac{\pt}{H_1}\am\cdots\am\frac{\pt}{H_t}. \]
For any $1\le i\le s$, there exists unique $j_{(i)}$ such that $\al$ sends the component $\frac{\pt}{G_i}$ to $\frac{\pt}{H_{j_{(i)}}}$. Put $\al_i=\al|_{\frac{\pt}{G_i}}\co\frac{\pt}{G_i}\to\frac{\pt}{H_{j_{(i)}}}$, and put
\[ A\uas_{ij}=\begin{cases}\el(\al_i)\uas & (j=j_{(i)})\\ 0 & (j\ne j_{(i)})\end{cases} \]
by {\rm (ii)}. Then we define $M\uas(\al)$ to be the composition of
\begin{eqnarray*}
M(\xg)\!\!\!\!&\ov{M_{!}(\zeta^{(\xg)})}{\lra}&\!\!\!\! M(\frac{\pt}{G_1})\oplus\cdots \oplus M(\frac{\pt}{G_s})\\
&\ov{[A\uas_{ij}]_{ij}}{\lra}&\!\!\!\! M(\frac{\pt}{H_1})\oplus\cdots \oplus M(\frac{\pt}{H_t})\ov{M\uas(\zeta^{(\yh)})}{\lra}M(\yh).
\end{eqnarray*}
Similarly, we can define $M_{!}(\al)$ by using $\el(\al_i)_{!}$.
\end{itemize}
\end{enumerate}
\end{ex}

\section{Functors on the span category}

We introduce the {\it span category} $\Sp(\C)$ of $\C$, to which we can apply the result of Panchadcharam and Street (\cite{PS}).

\begin{dfn}\label{Def_0519_1}
Let $\xg,\yh\in\Ob(\C)$ be any pair of objects. A {\it span from $\xg$ to $\yh$} is a pair of morphisms in $\C$
\[ S=(\yh\ov{\und{\be}_S}{\lla}\frac{W_S}{L_S}\ov{\und{\al}_S}{\lra}\xg). \]
We abbreviate this to $S=(\spStu)$.

Let $S=(\spStu)$ and $T=(\und{\be}_T,\wlT,\und{\al}_T)$ be spans of morphisms in $\C$ from $\xg$ to $\yh$.
$S$ and $T$ are said to be {\it isomorphic} and written as $S\cong T$, if there is an isomorphism $\und{\gamma}\in\C(\wlS,\wlT)$ satisfying
\begin{equation}\label{Eqalbe}
\und{\al}_T\ci\und{\gamma}=\und{\al}_S \quad\text{and}\quad \und{\be}_T\ci\und{\gamma}=\und{\be}_S.
\end{equation}
We denote the set of isomorphism classes by $\Sp(\C)(\xg,\yh)$. The isomorphism class $[S]=[\spStu]$ of $S$ will be denoted by the corresponding lower case letter $s$.
\end{dfn}

The class of natural weak pullbacks allows us to define the span category of $\C$.
\begin{dfn}\label{DefSpanCat}
Span category $\Scal=\Sp(\C)$ is defined as follows.
\begin{enumerate}
\item $\Ob(\Sp(\C))=\Ob(\C)$.
\item For any $\xg,\yh\in\Ob(\Sp(\C))$, the morphism set from $\xg$ to $\yh$ is $\Sp(\C)(\xg,\yh)$.
\end{enumerate}
When we want to emphasize $s$ is a morphism in $\Sp(\C)$, we will denote it as $s\co\xg\rta\yh$.
For a sequence of morphisms
\begin{eqnarray*}
&s=[\spStu]\co\xg\rta\yh&\\
&t=[\und{\be}_T,\wlT,\und{\al}_T]\co\yh\rta\zk,&
\end{eqnarray*}
their composition is defined to be $[\zk\ov{\und{\be}_T\ci\und{\wp}_T}{\lla}\wl\ov{\und{\al}_S\ci\und{\wp}_S}{\lra}\xg]$ by using a natural weak pullback as follows.
\[
\xy
(0,12)*+{\wl}="0";
(-15,0)*+{\frac{W_T}{L_T}}="2";
(15,0)*+{\frac{W_S}{L_S}}="4";
(-30,-12)*+{\zk}="6";
(0,-12)*+{\yh}="8";
(30,-12)*+{\xg}="10";
(0,0)*+{\nwp}="12";
{\ar_(0.6){\und{\wp}_T} "0";"2"};
{\ar^(0.6){\und{\wp}_S} "0";"4"};
{\ar_(0.6){\und{\be}_T} "2";"6"};
{\ar_(0.4){\und{\al}_T} "2";"8"};
{\ar^(0.4){\und{\be}_S} "4";"8"};
{\ar^(0.6){\und{\al}_S} "4";"10"};
\endxy
\]
The identity $\id\in\Sp(\C)(\xg,\xg)$ is given by $\id=[\xg\ov{\id}{\lla}\xg\ov{\id}{\lra}\xg]$.
\end{dfn}

\begin{rem}\label{RemTRemT}
Category $\Sp(\C)$ satisfies the following. (This agrees with $\Sp$ in \cite[Definition 5.2.10]{N_BisetMackey}.)
\begin{enumerate}
\item $\ems$ is zero object.
\item For any $\und{\al}\in\C(\xg,\yh)$, we can associate morphisms $\Rbf_{\und{\al}},\Tbf_{\und{\al}}$ in $\Sp(\C)$ by
\begin{eqnarray*}
&\Rbf_{\und{\al}}=[\xg\ov{\id}{\lla}\xg\ov{\und{\al}}{\lra}\yh]\in\Sp(\C)(\yh,\xg),&\\
&\Tbf_{\und{\al}}=[\yh\ov{\und{\al}}{\lla}\xg\ov{\id}{\lra}\xg]\in\Sp(\C)(\xg,\yh).&
\end{eqnarray*}
\item Coproduct $\am$ in $\C$ induces a product in $\Sp(\C)$. Namely, for any $\xg,\yh\in\Ob(\Sp(\C))$, if we take their coproduct
\[ \xg\ov{\und{\iota}_X}{\lra}\xg\am\yh\ov{\und{\iota}_Y}{\lla}\yh \]
in $\C$, then $\xg\ov{\Rbf_{\iota_X}}{\lta}\xg\am\yh\ov{\Rbf_{\iota_Y}}{\rta}\yh$ gives a product in $\Sp(\C)$.
\item $\Sp(\C)$ is enriched by $\Mon$. In fact, for any $\xg,\yh\in\Ob(\Sp(\C))$ and any $s=[\spSu], t=[\spTu]$, we can define their sum by
\[ s+t=[\yh\ov{\und{\be}_S\cup\und{\be}_T}{\lla}\wlS\am\wlT\ov{\und{\al}_S\cup\und{\al}_T}{\lra}\xg ], \]
which gives a structure of additive monoid on $\Sp(\C)(\xg,\yh)$ compatibly with compositions. The zero element in $\Sp(\C)(\xg,\yh)$ is given by $[\yh\leftarrow\ems\rightarrow\xg]$.
\end{enumerate}
\end{rem}

\bigskip

For categories $\Kcal,\Lcal$ with finite products, let $\Add(\Kcal,\Lcal)$ be the category of functors preserving finite products, where morphisms are natural transformations.

The following has been shown in \cite{N_BisetMackey}.
\begin{fact}\label{MackAdd}(\cite[Proposition 5.2.18]{N_BisetMackey})
There is an equivalence of categories
\[ \SMackC\simeq\Add(\Scal,\Sett). \]
\end{fact}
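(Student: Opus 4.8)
The plan is to establish the standard equivalence between (semi-)Mackey functors and finite-product-preserving functors out of a span category, in the present setting of $\C$ and $\Sp(\C)$. Starting from a semi-Mackey functor $M=(M_!,M\uas)$ on $\C$, I would build a functor $\Phi(M)\co\Sp(\C)\to\Sett$ which is the identity on objects and which sends a span $s=[\spSu]$ to $\Phi(M)(s)=M_!(\be_S)\ci M\uas(\al_S)\co M(\xg)\to M(\yh)$. First I would check this is well defined on isomorphism classes: if $\und\gamma$ intertwines two representatives as in $(\ref{Eqalbe})$, then, using that $M_!$ and $M\uas$ restrict to mutually inverse isomorphisms on isomorphisms of $\C$, one gets $M_!(\be_S)\ci M\uas(\al_S)=M_!(\be_T)\ci M_!(\gamma)\ci M\uas(\gamma)\ci M\uas(\al_T)=M_!(\be_T)\ci M\uas(\al_T)$. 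Preservation of identities is immediate. For composites, given $s\co\xg\rta\yh$ and $t\co\yh\rta\zk$, the composition $t\ci s$ is formed from a natural weak pullback placed between the inner legs $\und\al_T$ and $\und\be_S$; I would expand $\Phi(M)(t)\ci\Phi(M)(s)$, rewrite the middle factor $M\uas(\al_T)\ci M_!(\be_S)$ using the Mackey condition $(\ref{RCoeffAdd3})$ for that square, and then use functoriality of $M_!$ and $M\uas$ to collapse the whole product to $\Phi(M)(t\ci s)$. Finally, for finite-product preservation: $\ems$ is a zero object of $\Sp(\C)$ with $M(\ems)$ terminal, and a coproduct $\xg\ov{\und\iota_X}{\lra}\xg\am\yh\ov{\und\iota_Y}{\lla}\yh$ in $\C$ --- a product in $\Sp(\C)$ with projections $\Rbf_{\iota_X},\Rbf_{\iota_Y}$ by Remark \ref{RemTRemT}(3) --- induces $(M\uas(\iota_X),M\uas(\iota_Y))$, which is bijective exactly by the additivity axiom. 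A morphism of semi-Mackey functors has the same components as a natural transformation of the associated functors, so this gives a functor $\Phi\co\SMackC\to\Add(\Sp(\C),\Sett)$.

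Conversely, from $N\in\Add(\Sp(\C),\Sett)$ I would put $\Psi(N)(\xg)=N(\xg)$, $\Psi(N)_!(\und\al)=N(\Tbf_{\und\al})$ and $\Psi(N)\uas(\und\al)=N(\Rbf_{\und\al})$, using the morphisms $\Tbf,\Rbf$ of Remark \ref{RemTRemT}(2). Functoriality of each part follows from that of $N$ once one records the identities $\Tbf_{\und\be\ci\und\al}=\Tbf_{\und\be}\ci\Tbf_{\und\al}$ and $\Rbf_{\und\be\ci\und\al}=\Rbf_{\und\al}\ci\Rbf_{\und\be}$ in $\Sp(\C)$, which hold because the natural weak pullback occurring in the relevant span composition has an identity leg, hence is trivial. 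Additivity of $\Psi(N)$ is product preservation of $N$ via Remark \ref{RemTRemT}(3). The Mackey condition will follow from the span identity $\Tbf_{\und\gamma}\ci\Rbf_{\und\delta}=\Rbf_{\und\al}\ci\Tbf_{\und\be}$ attached to a natural weak pullback square as in Definition \ref{DefNWP} --- both sides being the class of $\xg\ov{\und\gamma}{\lla}\wl\ov{\und\delta}{\lra}\yh$, directly from the composition rule --- so that applying $N$ produces the commuting square $(\ref{RCoeffAdd3})$; hence $\Psi(N)\in\SMackC$.

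To finish I would verify $\Phi$ and $\Psi$ are mutually quasi-inverse. Since $\Phi(M)(\Tbf_{\und\al})=M_!(\al)\ci M\uas(\id)=M_!(\al)$ and likewise $\Phi(M)(\Rbf_{\und\al})=M\uas(\al)$, we get $\Psi\ci\Phi=\id$ on the nose; and since every span factors as $s=\Tbf_{\und\be_S}\ci\Rbf_{\und\al_S}$ in $\Sp(\C)$, we get $\Phi(\Psi(N))(s)=N(\Tbf_{\und\be_S})\ci N(\Rbf_{\und\al_S})=N(s)$, so $\Phi\ci\Psi=\id$; naturality in $M,N$ is routine. I expect the main obstacle to be the compatibility of $\Phi(M)$ with composition of spans: one must match the specific natural weak pullback used to define composition in $\Sp(\C)$ against exactly the right instance of the Mackey condition, with the correct orientation of the legs. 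Everything else reduces to functoriality of $M_!$ and $M\uas$ and the trivial-pullback identities for $\Tbf$ and $\Rbf$. (This is the content of \cite[Proposition 5.2.18]{N_BisetMackey}; the argument above is the standard one.)
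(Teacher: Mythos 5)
Your construction is exactly the standard equivalence that the paper imports from \cite[Proposition 5.2.18]{N_BisetMackey} and whose object/morphism correspondence it records explicitly ($F_M(s)=M_{!}(\be)\ci M^{\ast}(\al)$, with the inverse built from $\Tbf_{\und{\al}}$ and $\Rbf_{\und{\al}}$); both directions, the use of the Mackey condition for span composition, and the factorization $s=\Tbf_{\und{\be}_S}\ci\Rbf_{\und{\al}_S}$ are carried out correctly. The only steps left implicit — that $M_{!}(\gamma)$ and $M^{\ast}(\gamma)$ are mutually inverse for an isomorphism $\gamma$, and that squares with an identity leg are natural weak pullbacks — are routine consequences of the Mackey condition and of bipullbacks being stable under equivalence, so the argument is sound and matches the paper's (cited) approach.
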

Details can be found in \cite{N_BisetMackey}. Since $\Scal$ is enriched by $\Mon$, any $F\in\Ob(\Add(\Scal,\Sett))$ factors through the forgetful functor $\Mon\to\Sett$. Thus we have equivalences
\[ \SMack(\C)\simeq\Add(\Scal,\Sett)\simeq\Add(\Scal,\Mon). \]
This equivalence induces an equivalence $\Mack^{\Zbb}(\C)\simeq\Add(\Scal,\Ab)$, and thus also
\begin{equation}\label{Equiv_Mack_Add}
\Mack^k(\C)\simeq\Add(\Scal,\kMod)
\end{equation}
for any commutative ring $k$.

We briefly state how the objects correspond by the equivalence $(\ref{Equiv_Mack_Add})$. 
For any $M=(M_{!},M^{\ast})\in\Ob(\Mack^k(\C))$, the corresponding object $F_M\in\Ob(\Add(\Scal,\kMod))$ is given by the following.
\begin{enumerate}
\item $F_M(\xg)=M(\xg)$ for any object $\xg\in\Ob(\Scal)$.
\item $F_M([\yh\ov{\und{\be}}{\lla}\wl\ov{\und{\al}}{\lra}\xg])=M_{!}(\be)\ci M^{\ast}(\al)$ for any morphism $[\yh\ov{\und{\be}}{\lla}\wl\ov{\und{\al}}{\lra}\xg]\in\Scal(\xg,\yh)$.
\end{enumerate}

\begin{dfn}\label{DefCatM}
Denote the category $\Add(\Scal,\kMod)$ by $\M$.

We call $F\in\Ob(\M)$ is {\it deflative} if it corresponds to a deflative Mackey functor by $(\ref{Equiv_Mack_Add})$. 
By the above correspondence, we see that $F\in\Ob(\M)$ is deflative if and only if it satisfies
\[ F(s)=\id \]
for any $\xg\in\Ob(\Scal)$ and any morphism $s=[\xg\ov{\und{\al}}{\lla}\wl\ov{\und{\al}}{\lra}\xg]\in\Scal(\xg,\xg)$ where $\al$ is stab-surjective.
We denote the full subcategory of deflative objects in $\M$ by $\M_d\se\M$.

Because of the equivalence in Fact \ref{MackAdd}, we mainly work on $\M$ instead of $\Mack^k(\C)$. We will denote $F_M\in\Ob(\M)$ abbreviately by the same symbol $M$ in the following.
\end{dfn}

\begin{ex}\label{ExCatM}
For $\Obigk\in\Ob(\MackCk)$ and $\Omk\in\Ob(\MackdCk)$, we abbreviately denote $F_{\Obigk}$ and $F_{\Omk}$ by $\Obigk\in\Ob(\M)$ and $\Omk\in\Ob(\M_d)$.

Similarly, we denote the morphism in $\M$ corresponding to $\pbf\in\MackCk(\Obigk,\Omk)$ (obtained in Proposition \ref{PropStrOrdBurn}) by the same symbol $\pbf\in\M(\Obigk,\Omk)$. This is an epimorphism in $\M$.
\end{ex}

\begin{rem}\label{RemClosedMd}
$\M_d\se\M$ is closed under isomorphisms and direct summands.
\end{rem}

\begin{prop}
The category $\Scal$ is a compact closed category. More precisely, it satisfies the following.
\begin{enumerate}
\item The functor $\iii\co\Scal^{\op}\to\Scal$ defined in the following way, is isomorphism of categories.
\begin{itemize}
\item[-] $\iii(\xg)=\xg$ for any object.
\item[-] $\iii([\yh\ov{\und{\be}}{\lla}\wl\ov{\und{\al}}{\lra}\xg])=[\xg\ov{\und{\al}}{\lla}\wl\ov{\und{\be}}{\lra}\yh]$ for any morphism.
\end{itemize}
Moreover, this isomorphism is involutive, i.e., it makes the following diagram commutative.
\[
\xy
(-12,0)*+{\Scal^{\op}}="0";
(0,8)*+{\Scal}="2";
(0,-6)*+{}="3";
(12,0)*+{\Scal^{\op}}="4";
{\ar^(0.46){\iii} "0";"2"};
{\ar^(0.54){\iii^{\op}} "2";"4"};
{\ar@/_0.8pc/_{\Id} "0";"4"};
{\ar@{}|\circlearrowright "2";"3"};
\endxy
\]
\item $\Scal$ is a compact closed category $($\cite[sections 5,6]{PS}$)$, with the following structure.
\begin{itemize}
\item[{\rm (i)}] Tensor product $-\ti-\co\Scal\ti\Scal\to\Scal$ is induced from the product in $\C$, in a natural way.
\item[{\rm (ii)}] Unit for the tensor product is given by $\pte$.
\item[{\rm (iii)}] Dual of $\xg\in\Ob(\Scal)$ is given by $\iii(\xg)=\xg$. Namely, for any $\xg,\yh,\zk\in\Ob(\Sp(\C))$, there is a natural bijection
\[ \Scal(\yh\ti\xg,\zk)\cong\Scal(\yh,\xg\ti\zk). \]
\end{itemize}
\end{enumerate}
\end{prop}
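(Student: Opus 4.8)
The plan is to verify each claimed structure directly from the definition of $\Sp(\C)$ and the properties of natural weak pullbacks established in Corollary \ref{Property_of_C}. For part (1), I would define $\iii$ on objects as the identity and on morphisms by swapping the two legs of a span, and check functoriality. The key computation is that $\iii$ respects composition: given $s=[\spStu]$ and $t=[\und{\be}_T,\wlT,\und{\al}_T]$ composed via a natural weak pullback square, applying $\iii$ to the composite swaps both outer legs, and one checks this is the same as composing $\iii(t)$ and $\iii(s)$ using the \emph{same} natural weak pullback square (read in the mirrored orientation). Since a natural weak pullback square is symmetric in the two morphisms being pulled back — it comes from a bipullback in $\Sbb$, and Fact \ref{Prop2Pullback} is manifestly symmetric under swapping the roles of $\althh$ and $\bet$ (with $\kappa$ replaced by $\kappa\iv$) — this mirrored square is again a valid natural weak pullback, so the composites agree. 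Identities are obviously preserved. Involutivity $\iii^{\op}\ci\iii=\Id$ is immediate since swapping the legs twice returns the original span, and isomorphism classes are unchanged.

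For part (2), the tensor product $-\ti-\co\Scal\ti\Scal\to\Scal$ is defined on objects by the binary product in $\C$ (which exists by Corollary \ref{Property_of_C}(2)), and on a pair of morphisms $s=[\yh\ov{\und{\be}_S}{\lla}\wlS\ov{\und{\al}_S}{\lra}\xg]$ and $s'=[\yhp\ov{\und{\be}_{S'}}{\lla}\frac{W_{S'}}{L_{S'}}\ov{\und{\al}_{S'}}{\lra}\xgp]$ by forming $[\yh\am\yhp\leftarrow\wlS\ti\frac{W_{S'}}{L_{S'}}\to\xg\ti\xgp]$ with the evident product legs. The only thing to check is functoriality of this assignment, which reduces to the statement that the product of two natural weak pullback squares is again a natural weak pullback square; this follows because bipullbacks in $\Sbb$ are stable under products of 0-cells (a product of bipullback diagrams is a bipullback), and the product diagram visibly descends to $\C$. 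Symmetry, associativity and the unit constraints (with unit $\pte$, which is terminal by Corollary \ref{Property_of_C}(1)) are inherited from the corresponding structure on $\C$ — each coherence isomorphism in $\C$ gives an invertible span via the functor $\Rbf_{(-)}$ of Remark \ref{RemTRemT}, and the coherence diagrams commute because they already commute in $\C$.

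For the compact closed structure (item (iii)), the claim is that $\iii(\xg)=\xg$ serves as a dual, i.e. there is a natural bijection $\Scal(\yh\ti\xg,\zk)\cong\Scal(\yh,\xg\ti\zk)$. I would produce this bijection explicitly: a span $\yh\ti\xg\leftarrow\wl\to\zk$ is sent to $\yh\leftarrow\wl\to\xg\ti\zk$ by composing the left leg with the projection $\yh\ti\xg\to\yh$ and replacing the right leg by the pair (projection to $\xg$ of the left leg, original right leg). The inverse is the mirror construction. Both operations are well-defined on isomorphism classes, and mutually inverse because in $\C$ the relevant projections reconstruct a map into a product; naturality in all three variables is a routine diagram chase using the universal property of products in $\C$. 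Alternatively — and this is the cleaner route I would ultimately present — all of part (2) follows formally from the fact that $\C$ has finite products together with the class of natural weak pullbacks satisfying the axioms needed to form a span category, by invoking the general construction of the compact closed bicategory of spans (as in \cite[sections 5,6]{PS}); the content of the proposition is then precisely that our $(\C,\nwp)$ meets the hypotheses of that construction, which is exactly Corollary \ref{Property_of_C}.

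I expect the main obstacle to be the bookkeeping in verifying that \emph{natural weak pullbacks} (as opposed to honest pullbacks) suffice for associativity of composition and for the hom-set bijection — one must repeatedly check that the various pasted and mirrored squares built from natural weak pullbacks are themselves natural weak pullbacks, which ultimately rests on the corresponding pasting and symmetry lemmas for bipullbacks in $\Sbb$. These are geometrically clear but slightly tedious; once they are in hand, every other step is a formal consequence of $\C$ having finite products.
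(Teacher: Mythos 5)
Your proposal is correct and takes essentially the same route as the paper, which simply observes that the Panchadcharam--Street construction (\cite{PS}) goes through verbatim once pullbacks are replaced by natural weak pullbacks, the required symmetry and pasting stability of which reduce to the corresponding facts for bipullbacks in $\Sbb$ exactly as you describe. One slip to correct: the tensor of two spans should have codomain $\yh\ti\yhp$, not $\yh\am\yhp$ --- the coproduct of $\C$ gives the categorical \emph{product} of $\Scal$ (Remark \ref{RemTRemT}), whereas the monoidal tensor here is the one induced by the product $\ti$ of $\C$.
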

\begin{proof}
This can be shown in a similar way as in \cite[section 2]{PS}. We only have to replace pullbacks by natural weak pullbacks.
\end{proof}

Thus we can apply the results by Panchadcharam and Street \cite{PS}, to obtain the following.
\begin{cor}\label{PropPS1}$($\cite[sections 5,6]{PS}$)$
$\M$ is equipped with the following functors, which make $(\M,\ot,\Obigk)$ a symmetric closed monoidal category.
\begin{enumerate}
\item Tensor functor $-\ot-\co\M\ti\M\to\M$.
\item Internal Hom functor $\HH\co(\M)^{\op}\ti\M\to\M$, together with a natural isomorphism
\[ \M(M\ot N,P)\cong\M(M,\HH(N,P))\quad(\fa M,N,P\in\Ob(\M)). \]
\end{enumerate}

We use the following symbols for the natural isomorphisms which are part of the monoidal structure $($\cite[VII.1]{MacLane}$)$. For any $L,M,N\in\Ob(\M)$,
\begin{equation}\label{Name_as}
\as\co L\ot (M\ot N)\ov{\cong}{\lra}(L\ot M)\ot N,
\end{equation}
\begin{equation}\label{Name_sym}
\sym\co M\ot N\ov{\cong}{\lra}N\ot M
\end{equation}
$($we abbreviately use the same symbol independently from $L,M,N$ as above$)$, and
\begin{equation}\label{Name_lr}
\ell_M\co \Obigk\ot M\ov{\cong}{\lra}M,\quad r_M\co M\ot \Obigk\ov{\cong}{\lra}M.
\end{equation}
\end{cor}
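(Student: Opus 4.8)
The plan is to invoke the Day convolution machinery of Panchadcharam and Street directly, since the preceding proposition has already verified its one substantial hypothesis: that $\Scal=\Sp(\C)$ is a compact closed category. First I would record the structural facts about $\Scal$ that feed into \cite[sections 5,6]{PS}: it is symmetric monoidal with tensor $\ti$ induced from the product in $\C$ and unit $\pte$, it is compact closed by the preceding proposition, and it has finite biproducts, since products and coproducts coincide by Remark \ref{RemTRemT}\,(1),(3) and hom-sets are commutative monoids by Remark \ref{RemTRemT}\,(4). By Definition \ref{DefCatM}, $\M=\Add(\Scal,\kMod)$ is exactly the category of finite-product-preserving functors to which \cite{PS} associates a symmetric monoidal closed structure; their statements are phrased over $\Ab$, and base changing along $k\ot_{\Zbb}-$ is routine and already implicit in the equivalences preceding Definition \ref{DefCatM}.

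Concretely, I would take $-\ot-$ to be the Day convolution of $(\Scal,\ti,\pte)$: up to the identifications of $(\ref{Equiv_Mack_Add})$, $M\ot N$ is the left Kan extension along $\ti\co\Scal\ti\Scal\to\Scal$ of the pointwise tensor $M(-)\ot_k N(-)$, computed by the usual coend over $\Scal\ti\Scal$. The internal Hom $\HH(N,P)$ is then forced by the required adjunction $\M(M\ot N,P)\cong\M(M,\HH(N,P))$ and admits the standard end description. The isomorphisms $\as$, $\sym$, $\ell_M$, $r_M$ of $(\ref{Name_as})$, $(\ref{Name_sym})$, $(\ref{Name_lr})$ are the associativity, symmetry and unit constraints produced by the Day construction out of the corresponding constraints of $(\Scal,\ti,\pte)$, and coherence (the pentagon, hexagon, triangle) is inherited from that of $\Scal$. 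All of this is a transcription of \cite[sections 5,6]{PS}.

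The one point requiring a genuine, if short, argument is the identification of the monoidal unit with $\Obigk$ rather than with an abstract representable functor. The Day unit is $\xg\mapsto k\ot_{\Zbb}K_0(\Scal(\pte,\xg))$, where the commutative monoid $\Scal(\pte,\xg)$ is the set of isomorphism classes of spans $\pte\lla\wl\lra\xg$. Since $\pte$ is terminal in $\C$ by Corollary \ref{Property_of_C}, the left leg of such a span is uniquely determined, so these isomorphism classes are precisely the isomorphism classes of the slice category $\C/\xg$, i.e. the elements of $\Afr_{\mathrm{big}}(\xg)$, and this bijection respects the semiring operations by the description of $\Afr_{\mathrm{big}}$ in Example \ref{Abig}. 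Applying $K_0$ and then $k\ot_{\Zbb}-$ yields $\Obigk(\xg)$; the remaining check is that this identification is natural in $\xg\in\Scal$, i.e. compatible with $M_{!}$ and $M^{\ast}$. This is where I expect the only real bookkeeping to lie. Once the unit is pinned down, symmetry and closedness of $(\M,\ot,\Obigk)$ follow immediately from the corresponding properties established in \cite{PS}.
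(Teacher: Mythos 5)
Your proposal is correct and follows essentially the same route as the paper, which likewise treats this as a direct application of \cite[sections 5,6]{PS} once $\Scal$ is known to be compact closed: the coend formula of Definition \ref{DefTensor} is exactly your Day convolution rewritten via the self-duality $\iii$, and the internal Hom of Definition \ref{DefHH} is the adjoint you describe. You also correctly isolate the one point the paper handles only in passing (Remark \ref{DefUnit}), namely that the Day unit $\xg\mapsto k\ot_{\Zbb}K_0(\Scal(\pte,\xg))$ agrees with $\Obigk$ because $\pte$ is terminal in $\C$, so spans out of $\pte$ are just objects of $\C/\xg$.
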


\bigskip

\bigskip

In the proceeding sections, we will show the following {\rm (I),(II),(III)}.
\begin{enumerate}
\item[{\rm (I)}] Functors $-\ot-$ and $\HH$ restricts to yield
\[ -\otd-\co\M_d\ti\M_d\to\M_d\quad\text{and}\quad %
\HH_d\co(\M_d)^{\op}\ti\M_d\to\M_d,
\]
which make $(\M_d,\otd,\Omk)$ a symmetric closed monoidal category.
\item[{\rm (II)}] $\Omk$ is a monoid in $\M$. Moreover, the category $\OMod$ of $\Omk$-modules is equivalent to $\M_d$.
\item[{\rm (III)}] The equivalence $\Phi\co\M_d\ov{\simeq}{\lra}\BFk$ given in \cite{N_BisetMackey} is a monoidal equivalence.
\end{enumerate}
In the sequel, we sometimes refer to these {\rm (I),(II),(III)}.
To prove them, let us review the construction used in \cite{PS}, in our terminology.
\begin{dfn}\label{DefDressbyX}
$\ \ $
\begin{enumerate}
\item Let $\xg\in\Ob(\Scal)$ be any object. Then the functor
\[ \ppp_{\xg}=-\ti\xg\co\Scal\to\Scal \]
preserves finite products (given by coproducts in $\C$) in $\Scal$, and thus gives a functor
\[ \Dbf(\xg,-)=-\ci\ppp_{\xg}\co\M\to\M. \]
For an object $M\in\Ob(\M)$, we simply denote $\Dbf(\xg,M)=M\ci\ppp_{\xg}\in\Ob(\M)$ by $M_{\xg}$.
\item Let $s\in\Scal(\xg,\yh)$ be any morphism. This induces a natural transformation
\[ \ppp_s=-\ti s\co\ppp_{\xg}\tc\ppp_{\yh}, \]
and thus it gives a natural transformation
\[ \Dbf(s,-)\co\Dbf(\xg,-)\tc\Dbf(\yh,-). \]
For any object $M\in\Ob(\M)$, we denote $\Dbf(s,M)\co\Dbf(\xg,M)\to\Dbf(\yh,M)$ simply by $M_s\co M_{\xg}\to M_{\yh}$.
\end{enumerate}
\end{dfn}
\begin{rem}
In fact, we have a functor $\Dbf\co\Scal\ti\M\to \M$.
\end{rem}

\begin{dfn}\label{DefTensor}(\cite[section 5]{PS})
Let $M,N\in\Ob(\M)$ be any pair of objects. Then $M\ot N\in\Ob(\M)$ is defined in the following way.
\begin{enumerate}
\item For any $\xg\in\Ob(\Scal)$, define a functor $T^{(\xg)}\co(\M)^{\op}\ti\M\to\M$ to be the composition of
\[ (\Scal)^{\op}\ti\Scal\ov{\iii\ti\Id}{\lra}\Scal\ti\Scal\ov{M_{\xg}\ti N}{\lra}\kMod\ti\kMod\ov{-\underset{k}{\ot}-}{\lra}\kMod. \]
Define $(M\ot N)(\xg)=\int^{\yh}T^{(\xg)}(\yh,\yh)$ to be its coend, equipped with the universal dinatural transformation
\[ \om^{(\xg)}\co T^{(\xg)}\ov{..}{\lra}(M\ot N)(\xg). \]
For the notation related to coends, see \cite[IX.6]{MacLane}.
\item For any morphism $s\in\Scal(\xg,\yh)$, the morphism $M_s\in\M(M_{\xg},M_{\yh})$ gives a natural transformation defined by the horizontal composition (= \lq whiskering' (\cite[P.275]{MacLane})) of the following.
\begin{equation}\label{EditAdd2}
\xy
(-46,0)*+{(\Scal)^{\op}\ti\Scal}="0";
(-20,0)*+{\Scal\ti\Scal}="2";
(20,0)*+{\kMod\ti\kMod}="4";
(46,0)*+{\kMod}="6";
{\ar^(0.56){\iii\ti\Id} "0";"2"};
{\ar@/^1.2pc/^{M_{\xg}\ti N} "2";"4"};
{\ar@/_1.2pc/_{M_{\yh}\ti N} "2";"4"};
{\ar^(0.6){-\underset{k}{\ot}-} "4";"6"};
{\ar@{=>}_{M_s\ti\id_N} (2,2);(2,-2)};
\endxy
\end{equation}
The universal property of the coend induces a morphism
\[ (M\ot N)(s)\co(M\ot N)(\xg)\to(M\ot N)(\yh) \]
compatible with $\om^{(\xg)}$ and $\om^{(\yh)}$.
\end{enumerate}
This makes $M\ot N$ into a functor $M\ot N\co\Scal\to\kMod$. This preserves finite products, and thus we obtain $M\ot N\in\Ob(\M)$.

For any $\varphi\in\M(M,M\ppr)$ and $\psi\in\M(N,N\ppr)$, the morphism $\varphi\ot\psi\co M\ot N\to M\ppr\ot N\ppr$ is also induced by the universal property of the coend.
\end{dfn}

\begin{rem}\label{DefTensorObjValue}
Explicitly, $(M\ot N)(\xg)$ is given by the following way, in terms of $\C$.
Let $M,N$ be objects in $\M$. For any $\xg\in\Ob(\C)$, the $k$-module $(M\ot N)(\xg)$ is given by 
\[ (M\ot N)(\xg)=(\underset{\akaxgu}{\bigoplus}M(\ak)\ot_k N(\ak))/\Isc, \]
where
\begin{itemize}
\item[-] the direct sum runs over objects $(\akaxgu)$ in $\C/\xg$,
\item[-] $\Isc$ is a $k$-submodule generated by
\begin{eqnarray*}
&\Set{ M\uas(\vp)(m\ppr)\ot n-m\ppr\ot N_{!}(\vp)(n)\ |\ \begin{array}{c}\und{\vp}\in (\C/\xg)(\akaxgu,\akaxgpu),\\ m\ppr\in M(\akp),n\in N(\ak)\end{array}}\\
\cup &\Set{M_{!}(\vp)(m)\ot n\ppr-m\ot N\uas(\vp)(n\ppr)\ |\ \begin{array}{c}\und{\vp}\in (\C/\xg)(\akaxgu, \akaxgpu),\\ m\in M(\ak), n\ppr \in N(\akp)\end{array}}.
\end{eqnarray*}
\end{itemize}
Here, $\C/\xg$ denotes the slice category of $\C$ over $\xg$.
\end{rem}

\begin{rem}\label{DefUnit}
The unit for the tensor is given by the composition of
\[ \Scal\ov{\Scal(\pte,-)}{\lra}\Mon\ov{K_0}{\lra}\Ab\ov{k\underset{\Zbb}{\ot}-}{\lra}\kMod, \]
which agrees with $\Obigk$.
\end{rem}

\begin{dfn}\label{DefHH}(\cite[section 6]{PS})
Let $M,N\in\Ob(\M)$ be any pair of objects. Then $\HH(M,N)\in\Ob(\M)$ is defined in the following way.
\begin{enumerate}
\item For any $\xg\in\Ob(\Scal)$, define as
\[ (\HH(M,N))(\xg)=\M(M,N_{\xg}). \]
\item For any $s\in\Scal(\xg,\yh)$, the morphism $N_s\co N_{\xg}\to N_{\yh}$ induces a morphism
\[ (\HH(M,N))(s)=N_s\ci-\co\M(M,N_{\xg})\to\M(M,N_{\yh}). \]
\end{enumerate}
This gives an object $\HH(M,N)\in\Ob(\M)$.

For any $\varphi\in\M(M,M\ppr)$ and $\psi\in\M(N,N\ppr)$, the morphism $\HH(\varphi,\psi)\co\HH(M\ppr,N)\to\HH(M,N\ppr)$ is defined by the composition of
\[ \M(M\ppr,N_{\xg})\ov{-\ci\varphi}{\lra}\M(M,N_{\xg})\ov{\Dbf(\xg,\psi)\ci-}{\lra}\M(M,N_{\xg}\ppr) \]
at any $\xg\in\Ob(\Scal)$.
\end{dfn}

\section{Monoidal structure on $\M_d$}

In this section, we show {\rm (I)}.

\begin{lem}\label{LemDressDefl}
For any $M\in\Ob(\M_d)$, we have the following.
\begin{enumerate}
\item Let $s=[\xg\ov{\und{\al}}{\lla}\wl\ov{\und{\al}}{\lra}\xg]$ be any morphism in $\Scal$, where $\al$ is stab-surjective. Then we have $M_s=\id\co M_{\xg}\to M_{\xg}$.
\item For any $\xg\in\Ob(\Scal)$, we have $M_{\xg}\in\Ob(\M_d)$.
\end{enumerate}
\end{lem}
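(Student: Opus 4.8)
The plan is to reduce both claims to the deflativity of $M$ itself, via a single auxiliary fact: the product in $\C$ of a stab-surjective morphism with an identity is again stab-surjective. Explicitly, if $\und{\al}\co\wl\to\xg$ is stab-surjective, then for every $\zk\in\Ob(\C)$ both $\id_{\zk}\ti\und{\al}\co\zk\ti\wl\to\zk\ti\xg$ and $\und{\al}\ti\id_{\zk}\co\wl\ti\zk\to\xg\ti\zk$ are stab-surjective. I would prove this by a direct check against Definition \ref{DefStabsurj}: condition (i) is immediate, and for (ii) one reads off the relation in the $\xg$-coordinate, applies stab-surjectivity of $\al$ there, and matches the remaining $\zk$-coordinate by hand, using that the $\zk$-part of the $(K\ti G)$-action on $\zk\ti\xg$ is left untouched. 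Alternatively, $\id_{\zk}\ti\und{\al}$ arises as a natural weak pullback of $\und{\al}$ along the projection $\zk\ti\xg\to\xg$ (paste with the biproduct square, which is itself a bipullback over $\pte$ by Fact \ref{Prop2Pullback}), so that Remark \ref{RemStabsurj}(3) applies.

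For part (1), unwinding Definition \ref{DefDressbyX} shows that the component of $M_s\co M_{\xg}\to M_{\xg}$ at an object $\zk\in\Ob(\Scal)$ is the map $M(\id_{\zk}\ti s)\co M(\zk\ti\xg)\to M(\zk\ti\xg)$. Since the tensor of $\Scal$ is induced componentwise from the product of $\C$ and $s=[\xg\ov{\und{\al}}{\lla}\wl\ov{\und{\al}}{\lra}\xg]$, we obtain $\id_{\zk}\ti s=[\zk\ti\xg\ov{\id_{\zk}\ti\und{\al}}{\lla}\zk\ti\wl\ov{\id_{\zk}\ti\und{\al}}{\lra}\zk\ti\xg]$, a span both of whose legs equal the stab-surjective morphism $\id_{\zk}\ti\und{\al}$. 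By the description of deflative objects in Definition \ref{DefCatM}, $M(\id_{\zk}\ti s)=\id$; since this holds for every $\zk$, we conclude $M_s=\id$.

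For part (2), the functor $M_{\xg}=M\ci\ppp_{\xg}$ already lies in $\Ob(\M)$ because $\ppp_{\xg}=-\ti\xg$ preserves finite products, so only deflativity remains to be verified. Given $t=[\zk\ov{\und{\al}}{\lla}\wl\ov{\und{\al}}{\lra}\zk]\in\Scal(\zk,\zk)$ with $\al$ stab-surjective, we have $M_{\xg}(t)=M(\ppp_{\xg}(t))=M(t\ti\id_{\xg})$ with $t\ti\id_{\xg}=[\zk\ti\xg\ov{\und{\al}\ti\id_{\xg}}{\lla}\wl\ti\xg\ov{\und{\al}\ti\id_{\xg}}{\lra}\zk\ti\xg]$, whose two legs both equal the stab-surjective $\und{\al}\ti\id_{\xg}$; hence $M_{\xg}(t)=\id$ by deflativity of $M$. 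As this holds for every such $t$, $M_{\xg}\in\Ob(\M_d)$.

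The only step with genuine content is the auxiliary fact about products in $\C$; everything else is routine bookkeeping with the definitions of $\ppp_{\xg}$, $M_{\xg}=\Dbf(\xg,M)$ and $M_s=\Dbf(s,M)$. The place requiring care is getting the coordinate-matching in that verification exactly right.
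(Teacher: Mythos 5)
Your proposal is correct and follows the same route as the paper: the whole lemma reduces to the fact that the product of a stab-surjective morphism with an identity is again stab-surjective, which the paper justifies exactly as in your second alternative, namely that products are special cases of natural weak pullbacks so Remark \ref{RemStabsurj}(3) applies. The paper compresses this into one sentence, while you additionally unwind the componentwise descriptions of $M_s$ and $M_{\xg}(t)$ and offer a direct coordinate check as a backup; both are fine.
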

\begin{proof}
This is an immediate consequence of Remark \ref{RemStabsurj} {\rm (3)}, because products are (special case of) natural weak pullbacks.
\end{proof}

\begin{prop}\label{PropTensorRestrict}
Let $M,N\in\Ob(\M)$ be any pair of objects. If at least one of $M$ and $N$ belongs to $\M_d$, then we have $M\ot N\in\Ob(\M_d)$.
\end{prop}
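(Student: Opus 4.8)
The plan is to reduce the statement to the deflativity criterion recorded in Definition \ref{DefCatM}, and then feed Lemma \ref{LemDressDefl} into the explicit construction of $M\ot N$ from Definition \ref{DefTensor}. First I would dispose of the asymmetry in the hypothesis: the formula for $M\ot N$ dresses only the first argument (as $M_{\xg}$), so it is cleanest to treat the case $M\in\Ob(\M_d)$ and then deduce the case $N\in\Ob(\M_d)$ from it. For the latter, applying the first case with first argument $N$ gives $N\ot M\in\Ob(\M_d)$, and then the symmetry isomorphism $\sym\co M\ot N\ov{\cong}{\lra}N\ot M$ of Corollary \ref{PropPS1}, together with Remark \ref{RemClosedMd} (closure of $\M_d$ under isomorphisms), yields $M\ot N\in\Ob(\M_d)$. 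So it suffices to treat $M\in\Ob(\M_d)$.

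By the criterion in Definition \ref{DefCatM}, proving $M\ot N\in\Ob(\M_d)$ then amounts to showing $(M\ot N)(s)=\id$ for every $\xg\in\Ob(\Scal)$ and every morphism $s=[\xg\ov{\und{\al}}{\lla}\wl\ov{\und{\al}}{\lra}\xg]$ in $\Scal$ with $\al$ stab-surjective. Since $M$ is deflative, Lemma \ref{LemDressDefl}\,{\rm (1)} gives $M_s=\id_{M_{\xg}}$ as a natural transformation $M_{\xg}\tc M_{\xg}$. I would then recall from Definition \ref{DefTensor}\,{\rm (2)} that $(M\ot N)(s)$ is the endomorphism of the coend $(M\ot N)(\xg)=\int^{\yh}T^{(\xg)}(\yh,\yh)$ induced, via the universal property of coends, by the natural transformation obtained by whiskering $M_s\ti\id_N$ in the diagram $(\ref{EditAdd2})$. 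As $M_s$ is the identity, this whiskered natural transformation is the identity endomorphism of $T^{(\xg)}$, and functoriality of the coend construction forces $(M\ot N)(s)=\id$. This finishes the argument.

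I do not anticipate a genuine obstacle here; the only point needing a moment's care is the functoriality of ``passing to the induced map on coends'', so that an identity natural transformation yields an identity morphism — but this is immediate from the way $(M\ot N)(s)$ is defined. If one prefers to avoid even this, one can instead argue via the explicit description of $(M\ot N)(\xg)$ in Remark \ref{DefTensorObjValue}, tracking how $s$ acts summand by summand on $\underset{\akaxgu}{\bigoplus}M(\ak)\ot_k N(\ak)$ and using deflativity of $M$ together with the stability of stab-surjectivity under products (a special case of Remark \ref{RemStabsurj}\,{\rm (3)}); but the coend formulation makes the conclusion essentially immediate.
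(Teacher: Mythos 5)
Your proposal is correct and follows essentially the same route as the paper's proof: reduce by symmetry to the case $M\in\Ob(\M_d)$, apply Lemma \ref{LemDressDefl}\,(1) to get $M_s=\id$, observe that the whiskered natural transformation $(\ref{EditAdd2})$ is then the identity, and conclude via the universal property of the coend. Your extra detail on the symmetry reduction (via $\sym$ and Remark \ref{RemClosedMd}) is a harmless elaboration of the paper's one-line appeal to symmetry.
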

\begin{proof}
By the symmetry, we may assume $M\in\Ob(\M_d)$. Let $s=[\xg\ov{\und{\al}}{\lla}\wl\ov{\und{\al}}{\lra}\xg]$ be any morphism in $\Scal$, where $\al$ is stab-surjective. By Lemma \ref{LemDressDefl} {\rm (1)}, we have $M_s=\id$. Thus the corresponding natural transformation $(\ref{EditAdd2})$ in Definition \ref{DefTensor} becomes identity. By the universal property of the coend, this implies $(M\ot N)(s)=\id$.
\end{proof}

\begin{dfn}\label{DefOtd}
By Proposition \ref{PropTensorRestrict}, the tensor functor on $\M$ restricts to yield a functor
\[ -\otd-\co\M_d\ti\M_d\to\M_d. \]
\end{dfn}

\begin{prop}\label{PropHomDefl}
Let $M,N\in\Ob(\M)$ be any pair of objects. If $N\in\Ob(\M_d)$, then $\HH(M,N)\in\Ob(\M_d)$.
\end{prop}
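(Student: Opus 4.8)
The plan is to verify the deflativity criterion of Definition~\ref{DefCatM} for the object $\HH(M,N)\in\Ob(\M)$ directly, reducing it to Lemma~\ref{LemDressDefl}(1) applied to $N$. No hypothesis on $M$ will be used, which is the point: the action of $\HH(M,N)$ on morphisms of $\Scal$ touches only the second variable.

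First I would recall what must be checked. By Definition~\ref{DefCatM}, $\HH(M,N)$ is deflative precisely when $(\HH(M,N))(s)=\id$ for every $\xg\in\Ob(\Scal)$ and every morphism $s=[\xg\ov{\und{\al}}{\lla}\wl\ov{\und{\al}}{\lra}\xg]$ in $\Scal$ with $\al$ stab-surjective. So fix such an $\xg$ and such an $s$; note $s$ is an endomorphism of $\xg$ in $\Scal$.

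Next I would unwind the definition of $\HH(M,N)$ on morphisms from Definition~\ref{DefHH}(2): since $s\in\Scal(\xg,\xg)$, the map $(\HH(M,N))(s)$ is the post-composition $N_s\ci-\co\M(M,N_{\xg})\to\M(M,N_{\xg})$, where $N_s=\Dbf(s,N)\co N_{\xg}\to N_{\xg}$ in the notation of Definition~\ref{DefDressbyX}. Hence it suffices to prove $N_s=\id_{N_{\xg}}$; once this holds, post-composition with $N_s$ is the identity on $(\HH(M,N))(\xg)=\M(M,N_{\xg})$, which is exactly $(\HH(M,N))(s)=\id$.

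Finally, since $N\in\Ob(\M_d)$ by hypothesis, Lemma~\ref{LemDressDefl}(1)---with $N$ in the role of the object called $M$ there---gives $N_s=\id$ for precisely this class of morphisms $s$, and the proof is complete. There is no genuine obstacle here: the substance is already contained in Lemma~\ref{LemDressDefl}(1) (which itself rests on the stability of stab-surjectivity under natural weak pullbacks, Remark~\ref{RemStabsurj}(3)), and the only observation needed is that Definition~\ref{DefHH} lets the tested morphism $s$ act through $N$ alone, so that deflativity of $N$ forces deflativity of $\HH(M,N)$ for arbitrary $M$.
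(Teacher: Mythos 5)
Your proof is correct and follows essentially the same route as the paper: both reduce the claim to $N_s=\id$ for the test morphisms $s$ via Lemma~\ref{LemDressDefl}(1) applied to $N$, and then observe that post-composition with $N_s$ is then the identity on $\M(M,N_{\xg})$. Your write-up merely spells out the unwinding of Definition~\ref{DefHH}(2) that the paper leaves implicit.
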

\begin{proof}
Let $s=[\xg\ov{\und{\al}}{\lla}\wl\ov{\und{\al}}{\lra}\xg]$ be any morphism in $\Scal$, where $\al$ is stab-surjective. By Lemma \ref{LemDressDefl} {\rm (1)}, we have $N_s=\id$. This implies $(\HH(M,N))(s)=\id$.
\end{proof}

\begin{dfn}\label{DefHd}
By Proposition \ref{PropHomDefl}, the functor $\HH$ restricts to yield a functor
\[ \HH_d\co(\M_d)^{\op}\ti\M_d\to\M_d. \]
\end{dfn}

\begin{cor}\label{CorHomDefl}
For any $M,N,P\in\Ob(\M_d)$, we have a natural isomorphism
\[ \M_d(M\otd N,P)\cong\M_d(M,\HH_d(N,P)). \]
In particular, the endofunctor $-\otd N\co\M_d\to\M_d$ is left adjoint to $\HH_d(N,-)$, for any $N\in\Ob(\M_d)$.
\end{cor}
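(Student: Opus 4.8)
The plan is to obtain the adjunction for free from the one already available on $\M$. By Corollary \ref{PropPS1} we have a natural isomorphism $\M(M\ot N,P)\cong\M(M,\HH(N,P))$ for all $M,N,P\in\Ob(\M)$, and by Definition \ref{DefCatM} the subcategory $\M_d\se\M$ is \emph{full}. Moreover, by Propositions \ref{PropTensorRestrict} and \ref{PropHomDefl} the bifunctors $\ot$ and $\HH$ send pairs of deflative objects to deflative objects, so that the functors $\otd$ and $\HH_d$ of Definitions \ref{DefOtd} and \ref{DefHd} are literally the restrictions of $\ot$ and $\HH$; in particular, for $M,N,P\in\Ob(\M_d)$ the objects $M\otd N$ and $\HH_d(N,P)$ coincide, as objects of $\M$, with $M\ot N$ and $\HH(N,P)$.

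First I would record the two equalities of hom-sets coming from fullness of $\M_d$:
\[ \M_d(M\otd N,P)=\M(M\ot N,P),\qquad \M_d(M,\HH_d(N,P))=\M(M,\HH(N,P)), \]
valid for all $M,N,P\in\Ob(\M_d)$. Splicing these into the isomorphism of Corollary \ref{PropPS1} immediately gives the desired bijection $\M_d(M\otd N,P)\cong\M_d(M,\HH_d(N,P))$. Naturality in $M$, $N$ and $P$ is inherited: the isomorphism of Corollary \ref{PropPS1} is natural in the three variables as a transformation of functors on $\M$, and restricting a natural transformation along the full (hence faithful) inclusion $\M_d\hookrightarrow\M$ in each variable preserves naturality. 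The final assertion is then the special case in which $N\in\Ob(\M_d)$ is fixed: the bijection $\M_d(-\otd N,-)\cong\M_d(-,\HH_d(N,-))$, natural in the two remaining variables, is precisely the statement that $-\otd N$ is left adjoint to $\HH_d(N,-)$.

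I do not expect any real obstacle; the only point requiring (minimal) care is the use of fullness of $\M_d\se\M$, which is exactly what guarantees that hom-sets — and the naturality squares among them — transport verbatim between $\M_d$ and $\M$. If one preferred a more structural formulation, one could instead invoke the general principle that a full subcategory closed under the tensor product and the internal hom inherits the symmetric closed monoidal adjunction; but the hands-on argument above is the shortest route given what has already been established.
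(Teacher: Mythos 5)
Your argument is correct and is exactly the paper's (much terser) proof: the adjunction isomorphism of Corollary \ref{PropPS1} restricts along the full inclusion $\M_d\hookrightarrow\M$ because Propositions \ref{PropTensorRestrict} and \ref{PropHomDefl} guarantee that $\otd$ and $\HH_d$ are literal restrictions of $\ot$ and $\HH$. Your write-up just makes explicit the identification of hom-sets and the inheritance of naturality that the paper leaves implicit.
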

\begin{proof}
This immediately follows from Propositions \ref{PropHomDefl}, \ref{DefHd} and Corollary \ref{PropPS1}.
\end{proof}

\begin{rem}\label{RemEpiTensor}
Let $\pbf\in\M(\Obigk,\Omk)$ be the epimorphism in Example \ref{ExCatM}. By the closedness, for any $M\in\Ob(\M)$, it follows that $\pbf\ot M$ is also an epimorphism. By the symmetry, so is $M\ot \pbf$. In particular $\pbf\ot\pbf\co\Obigk\ot\Obigk\to\Omk\ot\Omk$ is epimorphism, since $\pbf\ot\pbf=(\pbf\ot\Omk)\ci(\Obigk\ot\pbf)$.
\end{rem}

\begin{prop}\label{LemOmegapDefl}
For any $M\in\Ob(\M_d)$, the epimorphism $\pbf\co\Obigk\to\Omk$ induces an isomorphism
\begin{equation}
-\ci\pbf\co\M(\Omk,M)\ov{\cong}{\lra}\M(\Obigk,M).
\end{equation}
\end{prop}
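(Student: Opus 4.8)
The plan is to reduce this to Proposition \ref{PropHHom} (2) by transporting that statement across the equivalence $(\ref{Equiv_Mack_Add})$ of Fact \ref{MackAdd}. No new computation with coends or spans should be needed.

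First I would recall the dictionary between the two descriptions. By Definition \ref{DefCatM} the full subcategory $\M_d\se\M$ corresponds under $(\ref{Equiv_Mack_Add})$ to $\MackdCk\se\MackCk$, and by Example \ref{ExCatM} the objects $\Obigk,\Omk\in\Ob(\M)$ and the morphism $\pbf\in\M(\Obigk,\Omk)$ correspond, respectively, to the Mackey functors $\Obigk,\Omk$ and to the morphism $\pbf\co\Obigk\to\Omk$ of Proposition \ref{PropStrOrdBurn}. Since $\MackdCk$ is a \emph{full} subcategory of $\MackCk$, for any deflative Mackey functor $N$ we have $\MackdCk(\Omk,N)=\MackCk(\Omk,N)$; hence Proposition \ref{PropHHom} (2) asserts exactly that
\[ -\ci\pbf\co\MackCk(\Omk,N)\ov{\cong}{\lra}\MackCk(\Obigk,N) \]
is a bijection for every deflative $N$.

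Next, given $M\in\Ob(\M_d)$, let $N$ be the deflative Mackey functor corresponding to $M$. The equivalence $(\ref{Equiv_Mack_Add})$ induces $k$-linear bijections $\MackCk(\Omk,N)\cong\M(\Omk,M)$ and $\MackCk(\Obigk,N)\cong\M(\Obigk,M)$, and these are compatible with precomposition by $\pbf$, because an equivalence of categories respects composition of morphisms and $\pbf$ is carried to $\pbf$. Composing this commuting square with the bijection of the previous paragraph shows that $-\ci\pbf\co\M(\Omk,M)\to\M(\Obigk,M)$ is a $k$-linear bijection, hence an isomorphism of $k$-modules, as claimed.

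I expect the argument to be essentially bookkeeping: the only points requiring care are that the identifications of $\Obigk$, $\Omk$, $\pbf$ under $(\ref{Equiv_Mack_Add})$ are indeed the claimed ones and that the equivalence respects the $k$-linear and compositional structure; all the genuine mathematical content is already contained in Proposition \ref{PropHHom}, which in turn rests on the $\SIm$-factorization and on the deflativity hypothesis on $M$.
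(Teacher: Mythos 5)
Your proposal is correct and is exactly the paper's argument: the paper's proof reads ``This follows from Proposition \ref{PropHHom}, via the equivalence $\M\simeq\MackCk$,'' and your write-up simply makes explicit the bookkeeping (fullness of $\MackdCk$ in $\MackCk$, and the identification of $\Obigk$, $\Omk$, $\pbf$ under the equivalence) that the paper leaves implicit.
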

\begin{proof}
This follows from Proposition \ref{PropHHom}, via the equivalence $\M\simeq\MackCk$.
\end{proof}

\begin{prop}\label{PropEquivDefl}
For any $M\in\Ob(\M_d)$,
\[ \HH(\pbf,M)\co\HH(\Omk,M)\to\HH(\Obigk,M) \]
is isomorphism in $\M$.
\end{prop}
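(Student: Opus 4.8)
The plan is to prove that $\HH(\pbf,M)$ is an isomorphism in $\M$ by checking it objectwise over $\Scal$, since a morphism in $\M$ is an isomorphism precisely when it is one at every object of $\Scal$ (being a category of functors valued in $\kMod$). So the first step is to identify, for an arbitrary $\xg\in\Ob(\Scal)$, the component $(\HH(\pbf,M))(\xg)$ using Definition \ref{DefHH}.

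Unwinding that definition, the source and target at $\xg$ are $\M(\Omk,M_{\xg})$ and $\M(\Obigk,M_{\xg})$, and, reading $\HH(\pbf,M)$ as $\HH(\pbf,\id_M)$, its component at $\xg$ is the composite
\[ \M(\Omk,M_{\xg})\ov{-\ci\pbf}{\lra}\M(\Obigk,M_{\xg})\ov{\Dbf(\xg,\id_M)\ci-}{\lra}\M(\Obigk,M_{\xg}). \]
Because $\Dbf(\xg,-)$ is a functor, $\Dbf(\xg,\id_M)=\id_{M_{\xg}}$, so the second arrow is the identity and $(\HH(\pbf,M))(\xg)$ is simply precomposition with $\pbf$. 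The only point to observe is that the second variable of $\HH$ contributes nothing here; this is immediate from the formula, so there is no real obstacle in this step.

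It then remains to see that $-\ci\pbf\co\M(\Omk,M_{\xg})\to\M(\Obigk,M_{\xg})$ is an isomorphism for each $\xg$. Here I would invoke that $M\in\Ob(\M_d)$, so that Lemma \ref{LemDressDefl} {\rm (2)} gives $M_{\xg}\in\Ob(\M_d)$, and then apply Proposition \ref{LemOmegapDefl} with $M_{\xg}$ in the role of $M$. This produces the required isomorphism at every object, whence $\HH(\pbf,M)$ is an isomorphism in $\M$. In effect the proposition reduces, via Lemma \ref{LemDressDefl} {\rm (2)}, to the already-established Proposition \ref{LemOmegapDefl}, and no serious difficulty is expected.
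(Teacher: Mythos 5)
Your proposal is correct and follows exactly the paper's own argument: compute the component at each $\xg$ as $\M(\pbf,M_{\xg})$ via Definition \ref{DefHH}, then conclude using Lemma \ref{LemDressDefl} (2) (to get $M_{\xg}\in\Ob(\M_d)$) together with Proposition \ref{LemOmegapDefl}. You have merely made explicit the details the paper leaves implicit.
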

\begin{proof}
By definition, for each $\xg\in\Ob(\Scal)$,
\[ (\HH(\pbf,M))_{\xg}\co(\HH(\Omk,M))(\xg)\to(\HH(\Obigk,M))(\xg) \]
is given by $\M(\pbf,M_{\xg})\co\M(\Omk,M_{\xg})\to\M(\Obigk,M_{\xg})$.
This is isomorphism, by Lemma \ref{LemDressDefl} and Proposition \ref{LemOmegapDefl}.
\end{proof}

\begin{cor}\label{CorEquivDefl1}
Let $M\in\Ob(\M)$ be any object. Let $\ell_M\co\Obigk\ot M\ov{\cong}{\lra}M$ be the isomorphism $(\ref{Name_lr})$. Let $v_M\co\Obigk\to\HH(M,M)$ be the morphism corresponding to $\ell_M$ by the adjoint property. Then the following are equivalent.
\begin{enumerate}
\item $M\in\Ob(\M_d)$.
\item $\ell_M$ factors through $\pbf\ot M$. Namely, there exists $\ell_M\ppr\co\Omk\ot M\to M$ satisfying $\ell_M\ppr\ci(\pbf\ot M)=\ell_M$.
\item $v_M$ factors through $\pbf$. Namely, there exists $v_M\ppr\co\Omk\to \HH(M,M)$ satisfying $v_M\ppr\ci\pbf=v_M$.
\end{enumerate}
Remark that $v\ppr_M$ and $\ell_M\ppr$ in {\rm (2),(3)} are unique if they exist, since $\pbf$ and $\pbf\ot M$ are epimorphisms.
\end{cor}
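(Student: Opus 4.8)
The plan is to prove $(2)\Leftrightarrow(3)$, then $(1)\Rightarrow(3)$, and finally $(2)\Rightarrow(1)$; together these yield all the equivalences. The ingredients are the closed monoidal structure of Corollary~\ref{PropPS1}, Propositions~\ref{PropHomDefl}, \ref{PropTensorRestrict} and \ref{LemOmegapDefl}, and Remarks~\ref{RemEpiTensor} and \ref{RemClosedMd}, together with the facts that $\Omk\in\Ob(\M_d)$ and that $\ell_M$ is an isomorphism.

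For $(2)\Leftrightarrow(3)$ I would argue purely by adjunction naturality. Specializing the natural isomorphism $\M(M\ot N,P)\cong\M(M,\HH(N,P))$ of Corollary~\ref{PropPS1} to $N=P=M$ and reading it as natural in the first variable, the morphism $\pbf\co\Obigk\to\Omk$ yields a commutative square whose vertical arrows are the adjunction bijections $\M(\Omk\ot M,M)\cong\M(\Omk,\HH(M,M))$ and $\M(\Obigk\ot M,M)\cong\M(\Obigk,\HH(M,M))$, and whose horizontal arrows are precomposition with $\pbf\ot M$ (top) and with $\pbf$ (bottom). By the definition of $v_M$, the pair $(\ell_M,v_M)$ sits in the two $\Obigk$-corners of this square. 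Hence $\ell_M$ lies in the image of $-\ci(\pbf\ot M)$ if and only if $v_M$ lies in the image of $-\ci\pbf$, i.e. $(2)\Leftrightarrow(3)$; and the witnesses $\ell_M\ppr$, $v_M\ppr$, when present, correspond under the vertical bijections. Since $\pbf$ is an epimorphism, and hence so is $\pbf\ot M$ by Remark~\ref{RemEpiTensor}, these witnesses are unique when they exist, which is the final clause of the statement.

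For $(1)\Rightarrow(3)$: if $M\in\Ob(\M_d)$ then $\HH(M,M)\in\Ob(\M_d)$ by Proposition~\ref{PropHomDefl}, so Proposition~\ref{LemOmegapDefl} applied to the object $\HH(M,M)$ shows that $-\ci\pbf\co\M(\Omk,\HH(M,M))\to\M(\Obigk,\HH(M,M))$ is bijective; in particular $v_M$ lies in its image, which is $(3)$.

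For $(2)\Rightarrow(1)$ — the step carrying the actual content, though short — assume $\ell_M\ppr\co\Omk\ot M\to M$ satisfies $\ell_M\ppr\ci(\pbf\ot M)=\ell_M$. Since $\ell_M$ is an isomorphism, the composite $\sigma=(\pbf\ot M)\ci\ell_M\iv\co M\to\Omk\ot M$ satisfies $\ell_M\ppr\ci\sigma=\ell_M\ci\ell_M\iv=\id_M$, exhibiting $M$ as a direct summand of $\Omk\ot M$. Because $\Omk\in\Ob(\M_d)$, Proposition~\ref{PropTensorRestrict} gives $\Omk\ot M\in\Ob(\M_d)$, and $\M_d$ is closed under direct summands by Remark~\ref{RemClosedMd}; therefore $M\in\Ob(\M_d)$. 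I expect the only point requiring genuine care to be the bookkeeping in $(2)\Leftrightarrow(3)$ — namely that the adjunction of Corollary~\ref{PropPS1} is natural in the first variable so that $\pbf$ induces the displayed square, and that $\ell_M$ and $v_M$ truly occupy corresponding corners — whereas the other two implications are immediate consequences of results already in place.
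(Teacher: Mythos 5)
Your proposal is correct and follows essentially the same route as the paper: $(2)\Leftrightarrow(3)$ via naturality of the adjunction of Corollary \ref{PropPS1}, $(1)\Rightarrow(3)$ via Propositions \ref{PropHomDefl} and \ref{LemOmegapDefl}, and $(2)\Rightarrow(1)$ by exhibiting $M$ as a direct summand of $\Omk\ot M$ using Proposition \ref{PropTensorRestrict} and Remark \ref{RemClosedMd}. You merely spell out the adjunction square and the splitting $\sigma=(\pbf\ot M)\ci\ell_M\iv$ that the paper leaves implicit.
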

\begin{proof}
$(1)\Rightarrow(3)$ follows from Propositions \ref{PropHomDefl} and \ref{LemOmegapDefl}. $(2)\LR(3)$ follows from the adjoint property.

If $(2)$ holds, then $M$ becomes a direct summand of $\Omk\ot M$, which belongs to $\M_d$ by Proposition \ref{PropTensorRestrict}. Thus $M$ satisfies $M\in\Ob(\M_d)$ by Remark \ref{RemClosedMd}.
\end{proof}

For any $M\in\Ob(\M_d)$, the morphism $\ell_M\ppr$ obtained in Corollary \ref{CorEquivDefl1} {\rm (2)} becomes isomorphism. More precisely, we have the following.
\begin{cor}\label{CorEquivDefl2}
For any $M\in\Ob(\M)$, the following are equivalent.
\begin{enumerate}
\item $M\in\Ob(\M_d)$.
\item There is an isomorphism $\ell_M\ppr\co\Omk\ot M\ov{\cong}{\lra} M$ satisfying $\ell_M\ppr\ci(\pbf\ot M)=\ell_M$.
\item There exists an isomorphism $\Omk\ot M\cong M$.
\end{enumerate}
\end{cor}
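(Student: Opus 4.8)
The plan is to prove the cycle $(1)\Rightarrow(2)\Rightarrow(3)\Rightarrow(1)$, leaning on the previous Corollary \ref{CorEquivDefl1} for most of the work. The implication $(2)\Rightarrow(3)$ is immediate — just forget the compatibility with $\ell_M$. The implication $(3)\Rightarrow(1)$ is also essentially free: if there exists any isomorphism $\Omk\ot M\cong M$, then since $\Omk\in\Ob(\M_d)$, Proposition \ref{PropTensorRestrict} gives $\Omk\ot M\in\Ob(\M_d)$, and $\M_d\se\M$ is closed under isomorphisms by Remark \ref{RemClosedMd}, so $M\in\Ob(\M_d)$.

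The substance is $(1)\Rightarrow(2)$. Assume $M\in\Ob(\M_d)$. By Corollary \ref{CorEquivDefl1} $(1)\Rightarrow(2)$ we already have a (unique) morphism $\ell_M\ppr\co\Omk\ot M\to M$ with $\ell_M\ppr\ci(\pbf\ot M)=\ell_M$; what remains is to show $\ell_M\ppr$ is an isomorphism. First I would observe that $\ell_M\ppr$ is an epimorphism, since $\ell_M=\ell_M\ppr\ci(\pbf\ot M)$ is an isomorphism (hence epi) and so $\ell_M\ppr$ is epi. For injectivity (or rather, for constructing a two-sided inverse), the natural idea is to apply the functor $\Omk\ot-$ to $\ell_M\ppr$ and compare. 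Applying $\Omk\otd-$ to the map $\pbf\co\Obigk\to\Omk$ — or better, using associativity — one gets that $\Omk\ot(\pbf\ot M)\cong(\Omk\ot\Obigk)\ot M\to(\Omk\ot\Omk)\ot M$ and the multiplication-type maps on $\Omk$ fit together. Concretely I would build the candidate inverse as the composite $M\ov{\ell_M\iv}{\lra}\Obigk\ot M\ov{\pbf\ot M}{\lra}\Omk\ot M$ and check, using the uniqueness clauses in Corollary \ref{CorEquivDefl1} and the epimorphism property of $\pbf\ot M$, that this composite is inverse to $\ell_M\ppr$ on both sides. The relation $\ell_M\ppr\ci(\pbf\ot M)\ci\ell_M\iv=\ell_M\ci\ell_M\iv=\id_M$ handles one side immediately; for the other side one must show $(\pbf\ot M)\ci\ell_M\iv\ci\ell_M\ppr=\id_{\Omk\ot M}$, and since $\pbf\ot M$ is epi it suffices to verify this after precomposing with $\pbf\ot M$, which reduces (using $\ell_M\ppr\ci(\pbf\ot M)=\ell_M$ and naturality/coherence of $\ell$) to an identity among maps out of $\Obigk\ot M$ that already holds.

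The main obstacle I anticipate is exactly this last verification: pushing the equality $(\pbf\ot M)\ci\ell_M\iv\ci\ell_M\ppr\ci(\pbf\ot M)=(\pbf\ot M)$ through correctly requires knowing that $\pbf$ is compatible with the relevant unit constraints — essentially that $\pbf\co\Obigk\to\Omk$ is a morphism of monoids (or at least unital in the appropriate sense) so that $\ell$ and $\ell\ppr$ interact coherently. If the monoid structure on $\Omk$ and the statement that $\pbf$ respects units is not yet available at this point in the paper, an alternative route is to avoid it entirely: since $\ell_M\ppr$ is a split epimorphism (split by $(\pbf\ot M)\ci\ell_M\iv$ on one side), and since both $\Omk\ot M$ and $M$ lie in $\M_d$ with $M$ a retract of $\Omk\ot M$, one argues abstractly that the retraction is an isomorphism by a rank/idempotent-splitting argument — but cleanest is to note that $\Omk\ot M\to M\to\Omk\ot M$ composes to an idempotent $e$ on $\Omk\ot M$ with $\ell_M\ppr\ci e=\ell_M\ppr$, and the complementary summand $\ker$ maps to $0$ under the isomorphism $\ell_M\ppr\ci(\text{the other splitting})$; I would then identify this complementary summand as the image of $(\Obigk\ot M)$ killed by $\pbf\ot M$, which is zero by Remark \ref{RemEpiTensor}. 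Either way, the conceptual point — that being in $\M_d$ is the same as being a module-like object over $\Omk$ with the canonical action — is what makes the corollary go through, and I expect the write-up to be short once the coherence bookkeeping is set up.
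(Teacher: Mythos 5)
Your proposal is correct, and for the main implication $(1)\Rightarrow(2)$ it takes a genuinely different (and more elementary) route than the paper. The paper proves that $\ell_M\ppr$ is an isomorphism by a Yoneda argument inside $\M_d$: using the adjunction $\M(\Omk\ot M,N)\cong\M(\Omk,\HH(M,N))$ together with Propositions \ref{PropHomDefl} and \ref{LemOmegapDefl}, it shows $\M_d(\ell_M\ppr,N)$ is bijective for every $N\in\Ob(\M_d)$ and concludes by Yoneda's lemma. You instead exhibit the explicit inverse $(\pbf\ot M)\ci\ell_M\iv$, and your computation closes completely formally: $\ell_M\ppr\ci(\pbf\ot M)\ci\ell_M\iv=\ell_M\ci\ell_M\iv=\id_M$, and $(\pbf\ot M)\ci\ell_M\iv\ci\ell_M\ppr\ci(\pbf\ot M)=(\pbf\ot M)\ci\ell_M\iv\ci\ell_M=\pbf\ot M$, whence $(\pbf\ot M)\ci\ell_M\iv\ci\ell_M\ppr=\id$ by epimorphicity of $\pbf\ot M$ (Remark \ref{RemEpiTensor}). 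The obstacle you anticipate is not actually there: no unit-coherence of $\pbf$ or monoid structure on $\Omk$ is needed, only the defining equation $\ell_M\ppr\ci(\pbf\ot M)=\ell_M$; this is the general fact that if an isomorphism factors as $h\ci e$ with $e$ epi, then $h$ is an isomorphism with inverse $e\ci f\iv$. Consequently the entire second half of your paragraph (the idempotent-splitting fallback) is unnecessary and somewhat muddled, and should be dropped. Your route has the advantage of being self-contained and constructive; the paper's route fits its broader theme of controlling $\M_d$ through representable functors and the closed structure.
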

\begin{proof}
$(2)\Rightarrow(3)$ is trivial. $(3)\Rightarrow(1)$ follows from Remark \ref{RemClosedMd} and Proposition \ref{PropTensorRestrict}.

Let us show $(1)\Rightarrow(2)$.
Suppose $M$ belongs to $\M_d$. By Corollary \ref{CorEquivDefl1}, we have a morphism $\ell_M\ppr$ satisfying $\ell_M\ppr\ci(\pbf\ot M)=\ell_M$. Let us show $\ell_M\ppr$ is isomorphism.
Remark that $\Omk\ot M=\Omk\otd M$ belongs to $\M_d$ by Proposition \ref{PropTensorRestrict}. For any $N\in\Ob(\M)$, the following diagram is commutative.
\[
\xy
(-42,8)*+{\M(\Obigk,\HH(M,N))}="0";
(0,8)*+{\M(\Obigk\ot M,N)}="2";
(-42,-8)*+{\M(\Omk,\HH(M,N))}="10";
(0,-8)*+{\M(\Omk\otd M,N)}="12";
(12,6)*+{}="13";
(34,-8)*+{\M(M,N)}="14";
{\ar^{\cong} "0";"2"};
{\ar^{\M(\pbf,\HH(M,N))} "10";"0"};
{\ar^{\M(\pbf\ot M,N)} "12";"2"};
{\ar_{\M(\ell_M,N)}^{\cong} "14";"2"};
{\ar_{\cong} "10";"12"};
{\ar^(0.42){\M(\ell_M\ppr,N)} "14";"12"};
{\ar@{}|\circlearrowright "0";"12"};
{\ar@{}|\circlearrowright "12";"13"};
\endxy
\]
If $N\in\Ob(\M_d)$, then $\M(\pbf,\HH(M,N))$ is isomorphism by Propositions \ref{PropHomDefl}, \ref{LemOmegapDefl}. By the above commutativity, it follows that $\M(\ell_M\ppr,N)=\M_d(\ell_M\ppr,N)$ is an isomorphism for any $N\in\Ob(\M_d)$. Thus $\ell_M\ppr\in\M_d(\Omk\otd M,M)$ is isomorphism by Yoneda's lemma.
\end{proof}

\begin{rem}\label{RemEquivDefl}
Since $\HH(\Obigk,-)\co\M\to\M$ is isomorphic to the identity functor, Proposition \ref{PropEquivDefl} implies the following. ({\rm (ii)} also follows from Corollary \ref{CorEquivDefl2}.)

\begin{itemize}
\item[{\rm (i)}] $\HH(\Omk,-)|_{\M_d}\co\M_d\to\M$ is isomorphic to the inclusion functor $\M_d\hookrightarrow\M$.
\item[{\rm (ii)}] $\HH_d(\Omk,-)\co\M_d\to\M_d$ is isomorphic to the identity functor $\Id_{\M_d}$.
\end{itemize}
\end{rem}

\begin{cor}\label{CorDefCor}
Tensoring with $\Omk$ gives a functor
\[ -\ot\Omk\co\M\to\M_d, \]
which is left adjoint to the inclusion $\M_d\hookrightarrow\M$.
\end{cor}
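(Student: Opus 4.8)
The plan is to verify directly that $-\ot\Omk$ lands in $\M_d$ and then exhibit a natural bijection $\M_d(M\ot\Omk, N)\cong\M(M,N)$ for $M\in\Ob(\M)$ and $N\in\Ob(\M_d)$, with the unit of the adjunction supplied by $\pbf$. That $-\ot\Omk$ takes values in $\M_d$ is immediate from Proposition \ref{PropTensorRestrict}, since $\Omk\in\Ob(\M_d)$; and it is a functor $\M\to\M_d$ because $-\ot-$ is a functor on $\M$ and $\M_d$ is a full subcategory.

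For the adjunction, I would start from the symmetric closed monoidal adjunction on $\M$ of Corollary \ref{PropPS1}, which gives for any $M\in\Ob(\M)$ and $N\in\Ob(\M_d)\subseteq\Ob(\M)$ a natural isomorphism
\[
\M(M\ot\Omk, N)\;\cong\;\M(M,\HH(\Omk,N)).
\]
By Remark \ref{RemEquivDefl}(i), the functor $\HH(\Omk,-)|_{\M_d}\co\M_d\to\M$ is isomorphic to the inclusion $\M_d\hookrightarrow\M$; composing, we get a natural isomorphism $\M(M\ot\Omk,N)\cong\M(M,N)$, natural in $M\in\M$ and in $N\in\M_d$. Since $M\ot\Omk\in\Ob(\M_d)$, the left-hand side equals $\M_d(M\ot\Omk,N)$ (fullness of $\M_d$), so this is exactly the hom-set bijection witnessing that $-\ot\Omk\co\M\to\M_d$ is left adjoint to the inclusion.

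The one point that needs a little care is identifying the unit of this adjunction and checking it is natural — i.e. tracing through the chain of isomorphisms to see that the unit $M\to M\ot\Omk$ (viewed in $\M$, after applying the inclusion) is the composite of $r_M^{-1}\co M\ov{\cong}{\to} M\ot\Obigk$ with $M\ot\pbf\co M\ot\Obigk\to M\ot\Omk$; concretely, the isomorphism $\HH(\Omk,N)\cong N$ of Remark \ref{RemEquivDefl}(i) is induced by $\HH(\pbf,N)$ (Proposition \ref{PropEquivDefl}) together with $\HH(\Obigk,-)\cong\Id$, so the unit is obtained by precomposing the unit of the $(-\ot\Obigk,\HH(\Obigk,-))=(\Id,\Id)$ adjunction with $\pbf$. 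I expect this bookkeeping to be the only mildly delicate step; everything else is a formal consequence of the closed monoidal structure and the two facts already established, namely that $\M_d$ is closed under $\ot$ (Proposition \ref{PropTensorRestrict}) and that $\HH(\pbf,-)$ is an isomorphism on $\M_d$ (Proposition \ref{PropEquivDefl}). Alternatively, and perhaps more cleanly, one can avoid the unit computation entirely by invoking Corollary \ref{CorEquivDefl2}: for $N\in\Ob(\M_d)$ the counit $\ell_N'\co\Omk\ot N\ov{\cong}{\to} N$ is an isomorphism, so the composite $\M(M,N)\to\M(M\ot\Omk,\Omk\ot N)\ov{(\ell_N')_*}{\to}\M(M\ot\Omk,N)$ is inverse to the restriction map along the unit, giving the bijection without further effort.
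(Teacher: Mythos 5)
Your proposal is correct and follows essentially the same route as the paper, whose proof is a one-line citation of Proposition \ref{PropTensorRestrict} (to see that $-\ot\Omk$ lands in $\M_d$), the closed monoidal adjunction, and Remark \ref{RemEquivDefl} (to identify $\HH(\Omk,-)|_{\M_d}$ with the inclusion). Your additional discussion of the unit and the alternative via Corollary \ref{CorEquivDefl2} only elaborates on details the paper leaves implicit.
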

\begin{proof}
This follows from Proposition \ref{PropTensorRestrict}, Corollary \ref{CorHomDefl} and Remark \ref{RemEquivDefl}.
\end{proof}

By Corollary \ref{CorEquivDefl2}, we have isomorphism $\ell_M\ppr\co\Omk\ot M\ov{\cong}{\lra}M$ for any $M\in\Ob(\M_d)$. The commutativity $\ell_M\ppr\ci(\pbf\ot M)=\ell_M$ and the naturality of $\ell$ ensures the naturality of $\ell\ppr$ in $M$. (This is shown in a similar way as for the commutativity of $(\ref{TTTTT})$ below. See the proof of Proposition \ref{PropDeflMon}.)
\begin{dfn}\label{Defr}
For any $M\in\Ob(\M_d)$, define the isomorphism $r_M\ppr\co M\ot\Omk\ov{\cong}{\lra}M$ to be the composition of isomorphisms
\[ M\ot\Omk\ov{\sym}{\lra}\Omk\ot M\ov{\ell_M\ppr}{\lra}M. \]
This is natural in $M$, and satisfies $r_M\ppr\ci(M\ot\pbf)=r_M$ where $r_M\co M\ot\Obigk\ov{\cong}{\lra}M$ is the isomorphism $(\ref{Name_lr})$.
\end{dfn}

\begin{prop} \label{PropDeflMon}
$(\M_d,\otd,\Omk)$ is a symmetric closed monoidal category. In addition, the functor $-\ot\Omk \co \M\to\M_d$ is a monoidal functor.
\end{prop}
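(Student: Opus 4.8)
The plan is to verify the axioms of a symmetric closed monoidal category for $(\M_d,\otd,\Omk)$ by transporting the already-established structure on $(\M,\ot,\Obigk)$ along the reflection $-\ot\Omk\co\M\to\M_d$, using the epimorphism $\pbf\co\Obigk\to\Omk$ as the comparison data for the unit. The closedness is already in hand: Corollary \ref{CorHomDefl} gives the adjunction $\M_d(M\otd N,P)\cong\M_d(M,\HH_d(N,P))$, so only the monoidal-category axioms (associativity and unit constraints satisfying pentagon and triangle, plus the symmetry satisfying its coherence hexagons) need attention.

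First I would define the associativity and symmetry isomorphisms on $\M_d$ to be simply the restrictions of $\as$ and $\sym$ from $(\ref{Name_as})$, $(\ref{Name_sym})$; this is legitimate because $\M_d$ is a full subcategory closed under $\ot$ by Proposition \ref{PropTensorRestrict}, so these isomorphisms and all the coherence diagrams (pentagon, hexagons) live already inside $\M$ and hold there. For the unit, I would take $\Omk$ as the new unit object with left/right unitors $\ell_M\ppr$ (from Corollary \ref{CorEquivDefl2}, unique by epimorphicity of $\pbf\ot M$) and $r_M\ppr$ (Definition \ref{Defr}). The key point is the compatibility diagram
\[
\xy
(-16,8)*+{\Obigk\ot M}="0";
(16,8)*+{M}="2";
(-16,-8)*+{\Omk\ot M}="4";
(16,-8)*+{M}="6";
{\ar^{\ell_M} "0";"2"};
{\ar_{\pbf\ot M} "0";"4"};
{\ar_{\ell_M\ppr} "4";"6"};
{\ar@{=}"2";"6"};
\endxy
\]
and its right-hand analog, which are exactly the defining equations $\ell_M\ppr\ci(\pbf\ot M)=\ell_M$ and $r_M\ppr\ci(M\ot\pbf)=r_M$. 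Then the triangle identity for $(\M_d,\otd,\Omk)$, namely $(M\otd\ell_N\ppr)=\ldots$, reduces to the triangle identity for $(\M,\ot,\Obigk)$ after precomposing with the epimorphism $M\ot\pbf\ot N$ (which is an epimorphism by Remark \ref{RemEpiTensor} applied twice); since $\pbf\ot M$-type maps are epi, the equation of morphisms out of $M\otd(\Omk\otd N)$ can be checked after that precomposition. Likewise the coherence of $\sym$ with the new unitors, and the pentagon involving $\Omk$ if any, all descend from $\M$ by the same epimorphism-cancellation trick. Naturality of $\ell\ppr$ and $r\ppr$ is asserted in Definition \ref{Defr}; I would spell it out here by the diagram chase alluded to there: for $\vp\co M\to M\ppr$ in $\M_d$, the square with $\ell_M\ppr,\ell_{M\ppr}\ppr$ commutes after precomposing with the epimorphism $\pbf\ot M$, using naturality of $\ell$ and of $\pbf\ot-$.

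For the second statement, that $-\ot\Omk\co\M\to\M_d$ is a monoidal functor, I would produce the structure isomorphism $(M\ot\Omk)\otd(N\ot\Omk)\cong(M\ot N)\ot\Omk$ by combining the symmetry and associativity of $\ot$ with the right unitor $r_{\Omk}\ppr\co\Omk\otd\Omk\ov{\cong}{\to}\Omk$ (equivalently, the idempotent $\pbf$ makes $\Omk$ a monoid, foreshadowed as (II)), and the unit comparison $\Obigk\ot\Omk\cong\Omk$ is $\ell_{\Omk}\ppr$ (or $r_{\Obigk}$). The associativity and unitality hexagons/squares for a monoidal functor then again reduce to coherence in $(\M,\ot,\Obigk)$ by cancelling the relevant epimorphisms of the form $\pbf\ot(-)$.

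The main obstacle I anticipate is purely bookkeeping: making sure that every comparison morphism introduced on $\M_d$ ($\ell\ppr$, $r\ppr$, the monoidal-functor constraint) is canonical — i.e. uniquely determined by an equation involving a $\pbf$-epimorphism — so that each coherence diagram in $\M_d$ can be reduced to the corresponding one in $\M$ by precomposing with an epimorphism and invoking that the latter already commutes. The one genuinely substantive input, that such $\ell_M\ppr$ exists and is an isomorphism, has already been done in Corollaries \ref{CorEquivDefl1} and \ref{CorEquivDefl2}; everything after that is a sequence of "precompose with an epi, cite coherence in $\M$" arguments, so no step should require a new idea, only care that the epimorphisms in question — products and tensor powers of $\pbf$ — are indeed epi, which is Remark \ref{RemEpiTensor}.
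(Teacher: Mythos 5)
Your proposal is correct and follows essentially the same route as the paper: restrict $\as$ and $\sym$ from $\M$, take $\ell_M\ppr$ and $r_M\ppr$ as the unitors, reduce the unit-triangle coherence to the corresponding diagram in $\M$ by precomposing with the epimorphism $M\ot(\pbf\ot N)$ (Remark \ref{RemEpiTensor}), get closedness from Corollary \ref{CorHomDefl}, and build the monoidal-functor constraint for $-\ot\Omk$ from $\as$, $\sym$ and $\ell_{\Omk}\ppr$. The only cosmetic difference is that you spell out the pentagon/hexagon bookkeeping a bit more explicitly than the paper does.
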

\begin{proof}
First, we show that $(\M_d,\otd,\Omk)$ is a symmetric monoidal category. For any $L,M,N\in\Ob(\M_d)$, the isomorphisms
\begin{eqnarray*}
&\as\co L\otd(M\otd N)\ov{\cong}{\lra}(L\otd M)\otd N,&\\
&\sym\co M\otd N\ov{\cong}{\lra}N\otd M&
\end{eqnarray*}
are taken to be the same as those for $\M$ (i.e. $(\ref{Name_as}),(\ref{Name_sym})$). Thus the compatibility among them is inherited from that in $\M$.
It remains to show the commutativity of 
\begin{equation}\label{TTTTT}
\xy
(-15,8)*+{M\otd(\Omk\otd N)}="0";
(15,8)*+{(M\otd\Omk)\otd N}="2";
(0,-8)*+{M\otd N}="4";
(0,11)*+{}="5";
{\ar^{\as}_{\cong} "0";"2"};
{\ar_(0.4){M\otd \ell_N\ppr} "0";"4"};
{\ar^(0.4){r_M\ppr\otd N} "2";"4"};
{\ar@{}|\circlearrowright "4";"5"};
\endxy.
\end{equation}
This follows from the commutativity of the following diagram and the epimorphicity of $M\ot(\pbf\ot N)$.
\[
\xy
(-15,8)*+{M\ot(\Obigk\ot N)}="0";
(-44,20)*+{M\otd(\Omk\otd N)}="10";
(15,8)*+{(M\ot\Obigk)\ot N}="2";
(44,20)*+{(M\otd\Omk)\otd N}="12";
(0,-8)*+{M\ot N}="4";
(0,-24)*+{M\otd N}="14";
(0,11)*+{}="5";
{\ar^{\as} "0";"2"};
{\ar^{\as} "10";"12"};
{\ar_(0.4){M\ot \ell_N} "0";"4"};
{\ar_(0.46){M\otd \ell_N\ppr} "10";"14"};
{\ar^(0.4){r_M\ot N} "2";"4"};
{\ar^(0.46){r_M\ppr\otd N} "12";"14"};
{\ar_{M\ot(\pbf\ot N)} "0";"10"};
{\ar^{(M\ot\pbf)\ot N} "2";"12"};
{\ar@{=} "4";"14"};
(-14,-6)*+{_{\circlearrowright}}="-1";
(14,-6)*+{_{\circlearrowright}}="-2";
(0,15)*+{_{\circlearrowright}}="-3";
{\ar@{}|\circlearrowright "4";"5"};
\endxy
\]
Closedness follows from Corollary \ref{CorHomDefl}.

For any $M,N\in\Ob(\M)$, we have isomorphisms
\[ (M\ot\Omk)\otd(N\ot\Omk)\cong (M\ot N)\ot(\Omk\otd\Omk)\ov{\id\ot \ell\ppr_{\Omk}}{\lra}(M\ot N)\ot\Omk \]
and
\[ \ell_{\Omk}\co\Obigk\ot\Omk\ov{\cong}{\lra}\Omk. \]
With these isomorphisms, we can confirm $-\ot\Omk\co\M\to\M_d$ is a monoidal functor, in a straightforward way.
\end{proof}

\section{Equivalence $\M_d\simeq\OMod$}

In this section, we show {\rm (II)}. Although this is a formal consequence of the epimorphicity of $\pbf$ and Corollary \ref{CorEquivDefl2}, we give a short proof for the sake of completeness.

As in the case of ordinary Mackey functors, we call a monoid object in $\M$ a Green functor.
\begin{dfn}\label{DefGreenS}
A {\it Green functor} on $\C$ is an object $\Gam\in\Ob(\M)$ equipped with morphisms
\[ m=m_{\Gam}\co\Gam\ot\Gam\to\Gam,\quad u=u_{\Gam}\co\Obigk\to\Gam \]
which make the following diagrams commutative.
\begin{itemize}
\item[{\rm (i)}]
$
\xy
(-32,8)*+{\Gam\ot(\Gam\ot\Gam)}="0";
(0,8)*+{(\Gam\ot\Gam)\ot\Gam}="2";
(32,8)*+{\Gam\ot\Gam}="4";
(-32,-8)*+{\Gam\ot\Gam}="10";
(32,-8)*+{\Gam}="14";
{\ar^{\as}_{\cong} "0";"2"};
{\ar^(0.56){m\ot\Gam} "2";"4"};
{\ar_{\Gam\ot m} "0";"10"};
{\ar^{m} "4";"14"};
{\ar_{m} "10";"14"};
{\ar@{}|\circlearrowright "0";"14"};
\endxy
$
\item[{\rm (ii)}]
$ 
\xy
(-24,8)*+{\Obigk\ot\Gam}="-2";
(-11,-5)*+{}="-1";
(0,8)*+{\Gam\ot\Gam}="0";
(11,-5)*+{}="1";
(24,8)*+{\Gam\ot\Obigk}="2";
(0,-8)*+{\Gam}="4";
{\ar^{u\ot\Gam} "-2";"0"};
{\ar_{\Gam\ot u} "2";"0"};
{\ar_{\ell_{\Gam}}^{\cong} "-2";"4"};
{\ar^{r_{\Gam}}_{\cong} "2";"4"};
{\ar|*+{_{m}} "0";"4"};
{\ar@{}|\circlearrowright "0";"-1"};
{\ar@{}|\circlearrowright "0";"1"};
\endxy
$
\end{itemize}
A Green functor $\Gam=(\Gam,m,u)$ is {\it commutative}, if it satisfies $m\ci\sym=m$.

If $\Gam=(\Gam,m_{\Gam},u_{\Gam})$ and $\Lam=(\Lam,m_{\Lam},u_{\Lam})$ are two Green functors on $\C$, then a {\it morphism} $f\co\Gam\to\Lam$ of Green functors is a morphism $f\in\M(\Gam,\Lam)$ which satisfies $f\ci m_{\Gam}=m_{\Lam}\ci(f\ot f)$ and $f\ci u_{\Gam}=u_{\Lam}$.
The composition and the identities are naturally induced from those in $\M$. The category of Green functors on $\C$ is denoted by $\GreenCk$.
\end{dfn}

\begin{dfn}
A Green functor $\Gam$ is called {\it deflative}, if it is deflative as an object in $\M$. We denote the full subcategory of $\GreenCk$ consisting of deflative ones by $\GreendCk$. Since $\Obigk$ is the unit of the tensor, it gives a Green functor $(\Obigk,\ell_{\Obigk},\id)$, which is initial in $\GreenCk$. 
\end{dfn}

\begin{dfn}\label{DefModule}
Let $\Gam=(\Gam,m,u)$ be a Green functor on $\C$.
A $\Gam${\it -module} is an object $M\in\Ob(\M)$ equipped with a morphism $\ac=\ac_{M}\co\Gam\ot M\to M$ which makes the following diagrams commutative.
\begin{itemize}
\item[{\rm (i)}]
$
\xy
(-32,8)*+{\Gam\ot(\Gam\ot M)}="0";
(0,8)*+{(\Gam\ot\Gam)\ot M}="2";
(32,8)*+{\Gam\ot M}="4";
(-32,-8)*+{\Gam\ot M}="10";
(32,-8)*+{M}="14";
{\ar^{\as} "0";"2"};
{\ar^(0.54){m\ot M} "2";"4"};
{\ar_{\Gam\ot\ac} "0";"10"};
{\ar^{\ac} "4";"14"};
{\ar_{\ac} "10";"14"};
{\ar@{}|\circlearrowright "0";"14"};
\endxy
$
\item[{\rm (ii)}]
$\xy
(-24,8)*+{\Obigk\ot M}="-2";
(-11,-5)*+{}="-1";
(0,8)*+{\Gam\ot M}="0";
(0,-8)*+{M}="4";
{\ar^{u\ot M} "-2";"0"};
{\ar_{\ell_M} "-2";"4"};
{\ar^{\ac} "0";"4"};
{\ar@{}|\circlearrowright "0";"-1"};
\endxy
$
\end{itemize}
If $M$ and $N$ are $\Gam$-modules, then a {\it morphism} $f\co M\to N$ of $\Gam$-modules is $f\in\M(M,N)$ which satisfies $f\ci\ac_M=\ac_N\ci(\Gam\ot f)$.
The compositions and the identities are naturally induced from those in $\M$. We denote the category of $\Gam$-modules by $\Gam\Mod$.
\end{dfn}

\begin{lem}\label{CommutForMd}
For any $M\in\Ob(\M_d)$, the following diagram is commutative.
\begin{equation}\label{CommForM}
\xy
(-32,8)*+{\Omk\otd(\Omk\otd M)}="0";
(0,8)*+{(\Omk\otd\Omk)\otd M}="2";
(32,8)*+{\Omk\otd M}="4";
(-32,-8)*+{\Omk\otd M}="10";
(32,-8)*+{M}="14";
{\ar^{\as} "0";"2"};
{\ar^(0.58){\ell_{\Omk}\ppr\otd M} "2";"4"};
{\ar_{\Omk\otd\ell_M\ppr} "0";"10"};
{\ar^{\ell_M\ppr} "4";"14"};
{\ar_{\ell_M\ppr} "10";"14"};
{\ar@{}|\circlearrowright "0";"14"};
\endxy
\end{equation}
\end{lem}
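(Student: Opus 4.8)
The plan is to prove the commutativity of $(\ref{CommForM})$ in exactly the way the commutativity of $(\ref{TTTTT})$ was obtained in the proof of Proposition \ref{PropDeflMon}: transport the diagram along the epimorphism $\pbf$ so that it becomes a diagram built purely out of the unit $\Obigk$, the associator $\as$, and the left unitor $\ell$ of the monoidal category $(\M,\ot,\Obigk)$, where it commutes by Mac Lane's coherence theorem (\cite[VII]{MacLane}).

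First I would collect the three ingredients. Since $\Omk\in\Ob(\M_d)$ and $\M_d$ is closed under $\ot$ (Proposition \ref{PropTensorRestrict}), the objects $\Omk\otd M=\Omk\ot M$ and $\Omk\otd\Omk$ lie in $\M_d$, so the isomorphisms $\ell_M\ppr$ and $\ell_{\Omk}\ppr$ of Corollary \ref{CorEquivDefl2} exist and satisfy $\ell_M\ppr\ci(\pbf\ot M)=\ell_M$ and $\ell_{\Omk}\ppr\ci(\pbf\ot\Omk)=\ell_{\Omk}$. Iterating Remark \ref{RemEpiTensor}, the morphism
\[ \pbf\ot(\pbf\ot M)\co\Obigk\ot(\Obigk\ot M)\lra\Omk\ot(\Omk\ot M)=\Omk\otd(\Omk\otd M) \]
is an epimorphism in $\M$. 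Finally, in $\M$ one has the coherence identities $(\ell_{\Obigk}\ot M)\ci\as=\ell_{\Obigk\ot M}$ and $\ell_M\ci(\Obigk\ot\ell_M)=\ell_M\ci\ell_{\Obigk\ot M}$, the latter being just the naturality of $\ell$ evaluated at the morphism $\ell_M\co\Obigk\ot M\to M$.

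Then I would precompose both legs of $(\ref{CommForM})$ with this epimorphism. Using bifunctoriality of $\ot$, naturality of $\as$, and the two defining relations for $\ell\ppr$, the upper leg $\ell_M\ppr\ci(\ell_{\Omk}\ppr\otd M)\ci\as$ collapses to $\ell_M\ci(\ell_{\Obigk}\ot M)\ci\as$: one pushes the two copies of $\pbf$ past $\as$, rewrites $\ell_{\Omk}\ppr\ci(\pbf\ot\pbf)=\ell_{\Omk}\ppr\ci(\pbf\ot\Omk)\ci(\Obigk\ot\pbf)=\ell_{\Omk}\ci(\Obigk\ot\pbf)=\pbf\ci\ell_{\Obigk}$ using the defining relation and naturality of $\ell$, and then absorbs the remaining $\pbf\ot M$ into $\ell_M\ppr$. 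Symmetrically, the lower leg $\ell_M\ppr\ci(\Omk\otd\ell_M\ppr)$ collapses to $\ell_M\ci(\Obigk\ot\ell_M)$. By the two coherence identities above, both $\ell_M\ci(\ell_{\Obigk}\ot M)\ci\as$ and $\ell_M\ci(\Obigk\ot\ell_M)$ equal $\ell_M\ci\ell_{\Obigk\ot M}$, so the two legs of $(\ref{CommForM})$ agree after precomposition with the epimorphism $\pbf\ot(\pbf\ot M)$, hence agree. As in the proof of Proposition \ref{PropDeflMon}, I would present this as a single commuting prism whose top face is $(\ref{CommForM})$, whose bottom face is the corresponding $\Obigk$-diagram (which commutes by coherence), and whose vertical edges are the relevant instances of $\pbf\ot(-)$.

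The content is entirely formal; the only place that requires care, and which I expect to be the main obstacle, is the bookkeeping of which tensor slot each occurrence of $\pbf$ sits in and which naturality square or interchange law is invoked at each step. There is no conceptual difficulty once one knows that $\pbf$ and each $\pbf\ot(-)$ is an epimorphism and that $\ell_{(-)}\ppr$ lifts $\ell_{(-)}$ along $\pbf$.
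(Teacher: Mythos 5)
Your proposal is correct and follows essentially the same route as the paper: both proofs precompose with the epimorphism $\pbf\ot(\pbf\ot M)$, use the defining relations $\ell_{(-)}\ppr\ci(\pbf\ot(-))=\ell_{(-)}$ together with naturality of $\as$ and $\ell$ to reduce $(\ref{CommForM})$ to the corresponding diagram over $\Obigk$, and then invoke the unit coherence of $(\M,\ot,\Obigk)$. The paper merely displays the three transporting squares and asserts the $\Obigk$-diagram commutes, whereas you spell out the coherence identities explicitly; the content is identical.
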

\begin{proof}
Remark that
\[
\xy
(-20,8)*+{\Omk\otd(\Omk\otd M)}="0";
(20,8)*+{\Obigk\ot(\Obigk\ot M)}="2";
(-20,-8)*+{\Omk\otd M}="4";
(20,-8)*+{\Obigk\ot M}="6";
{\ar_(0.52){\pbf\ot(\pbf\ot M)} "2";"0"};
{\ar_{\Omk\otd\ell_M\ppr} "0";"4"};
{\ar^{\Obigk\ot\ell_M} "2";"6"};
{\ar^{\pbf\ot M} "6";"4"};
{\ar@{}|\circlearrowright "0";"6"};
\endxy
\]
is commutative by the equality $\ell_M\ppr\ci(\pbf\ot M)=\ell_M$ and the functoriality of the tensor product. Similarly,
\[
\xy
(-18,8)*+{\Obigk\ot(\Obigk\ot M)}="0";
(18,8)*+{(\Obigk\ot\Obigk)\ot M}="2";
(-18,-8)*+{\Omk\otd(\Omk\ot M)}="4";
(18,-8)*+{(\Omk\otd\Omk)\ot M}="6";
{\ar^{\as} "0";"2"};
{\ar_{\pbf\ot(\pbf\ot M)} "0";"4"};
{\ar^{(\pbf\ot\pbf)\ot M} "2";"6"};
{\ar_{\as} "4";"6"};
{\ar@{}|\circlearrowright "0";"6"};
\endxy
\ \ \text{and}\ \ 
\xy
(-16,8)*+{(\Obigk\ot\Obigk)\ot M}="0";
(16,8)*+{\Obigk\ot M}="2";
(-16,-8)*+{(\Omk\otd\Omk)\otd M}="4";
(16,-8)*+{\Omk\otd M}="6";
{\ar^(0.6){\ell_{\Obigk}\ot M} "0";"2"};
{\ar_{(\pbf\ot\pbf)\ot M} "0";"4"};
{\ar^{\pbf\ot M} "2";"6"};
{\ar_(0.6){\ell_{\Omk}\ppr\otd M} "4";"6"};
{\ar@{}|\circlearrowright "0";"6"};
\endxy
\]
are commutative. Now the commutativity of $(\ref{CommForM})$ follows from the commutativity of
\[
\xy
(-36,8)*+{\Obigk\ot(\Obigk\ot M)}="0";
(0,8)*+{(\Obigk\ot\Obigk)\ot M}="2";
(36,8)*+{\Obigk\ot M}="4";
(-36,-8)*+{\Obigk\ot M}="10";
(36,-8)*+{M}="14";
{\ar^{\as} "0";"2"};
{\ar^(0.6){\ell_{\Obigk}\ot M} "2";"4"};
{\ar_{\Obigk\ot\ell_M} "0";"10"};
{\ar^{\ell_M} "4";"14"};
{\ar_{\ell_M} "10";"14"};
{\ar@{}|\circlearrowright "0";"14"};
\endxy
\]
and the epimorphicity of $\pbf\ot(\pbf\ot M)$, by a similar argument as for the commutativity of $(\ref{TTTTT}$).
\end{proof}

\begin{prop}\label{PropBurnside}
$(\Omk, \ell_{\Omk}\ppr, \pbf)$ is a deflative commutative Green functor on $\C$.
Moreover, $\pbf\co\Obigk\to\Omk$ is a morphism of Green functors.
\end{prop}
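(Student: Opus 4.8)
The plan is to verify, one at a time, the three conditions in the definition of a Green functor (Definition \ref{DefGreenS}) for the triple $(\Omk,\ell_{\Omk}\ppr,\pbf)$, reducing each to something already proved, and then to read off that $\pbf$ is a morphism of Green functors. Deflativity needs nothing new: $\Omk\in\Ob(\M_d)$ by Example \ref{ExCatM}, and a Green functor is deflative precisely when it is so as an object of $\M$. The associativity diagram, axiom {\rm (i)}, with $\Gam=\Omk$ and $m=\ell_{\Omk}\ppr$ (recalling that $\ot$ and $\otd$ agree on $\M_d$), is exactly the commutative square of Lemma \ref{CommutForMd} specialised to $M=\Omk$, so the whole associativity constraint is handed to us directly.

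The unit axiom {\rm (ii)} consists of two triangles. The left one, $\ell_{\Omk}\ppr\ci(\pbf\ot\Omk)=\ell_{\Omk}$, is the defining property of $\ell_{\Omk}\ppr$ recorded in Corollary \ref{CorEquivDefl2}. For the right one I would first establish commutativity $m\ci\sym=m$, which in view of Definition \ref{Defr} is the assertion $\ell_{\Omk}\ppr=r_{\Omk}\ppr$ as maps $\Omk\ot\Omk\to\Omk$. Since $\pbf\ot\pbf\co\Obigk\ot\Obigk\to\Omk\ot\Omk$ is an epimorphism (Remark \ref{RemEpiTensor}), it suffices to compare the two maps after precomposition with $\pbf\ot\pbf$. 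Factoring $\pbf\ot\pbf=(\pbf\ot\Omk)\ci(\Obigk\ot\pbf)$ and using $\ell_{\Omk}\ppr\ci(\pbf\ot\Omk)=\ell_{\Omk}$ together with the naturality of the structural left unitor gives $\ell_{\Omk}\ppr\ci(\pbf\ot\pbf)=\pbf\ci\ell_{\Obigk}$; symmetrically, factoring $\pbf\ot\pbf=(\Omk\ot\pbf)\ci(\pbf\ot\Obigk)$ and using $r_{\Omk}\ppr\ci(\Omk\ot\pbf)=r_{\Omk}$ (Definition \ref{Defr}) with the naturality of the right unitor gives $r_{\Omk}\ppr\ci(\pbf\ot\pbf)=\pbf\ci r_{\Obigk}$. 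By coherence in a symmetric monoidal category $\ell_{\Obigk}=r_{\Obigk}$ (\cite{MacLane}), so the two composites coincide and epimorphicity of $\pbf\ot\pbf$ yields $\ell_{\Omk}\ppr=r_{\Omk}\ppr$; this proves commutativity, and the right triangle of {\rm (ii)} then reads $r_{\Omk}\ppr\ci(\Omk\ot\pbf)=r_{\Omk}$, which is exactly Definition \ref{Defr}.

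Finally, $\pbf\co\Obigk\to\Omk$ is a morphism of Green functors from $(\Obigk,\ell_{\Obigk},\id_{\Obigk})$ to $(\Omk,\ell_{\Omk}\ppr,\pbf)$: the unit compatibility $\pbf\ci\id_{\Obigk}=\pbf$ is trivial, and the multiplicativity compatibility $\pbf\ci\ell_{\Obigk}=\ell_{\Omk}\ppr\ci(\pbf\ot\pbf)$ is precisely the identity obtained in the previous paragraph. I expect the only point needing real care to be the bookkeeping of left- versus right-unitors and of the symmetry isomorphism in axiom {\rm (ii)}, namely the derivation of $\ell_{\Omk}\ppr=r_{\Omk}\ppr$ via the epimorphicity of $\pbf\ot\pbf$; everything else is routine assembly, the substantive content having already been packaged into Lemma \ref{CommutForMd} and into the construction of $\ell_{\Omk}\ppr$ and $r_{\Omk}\ppr$.
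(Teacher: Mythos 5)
Your proof is correct and follows essentially the same route as the paper, whose own proof is just the one-line remark that everything follows from the definition of $\ell\ppr$ (and $r\ppr$) together with Lemma \ref{CommutForMd} applied to $M=\Omk$. You have simply spelled out the details the paper leaves implicit, in particular the epimorphicity argument via $\pbf\ot\pbf$ that yields $\ell_{\Omk}\ppr=r_{\Omk}\ppr$ and the identity $\ell_{\Omk}\ppr\ci(\pbf\ot\pbf)=\pbf\ci\ell_{\Obigk}$ used for the morphism-of-Green-functors claim.
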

\begin{proof}
This follows from the definition of $\ell\ppr$ and the commutativity of $(\ref{CommForM})$ applied to $M=\Omk$.
\end{proof}

\begin{prop} \label{PropActOmega}
For any $M\in\Ob(\M)$, the following are equivalent.
\begin{enumerate}
\item $M\in\Ob(\M_d)$.
\item $M$ has a $($unique$)$ structure of an $\Omk$-module.
\end{enumerate}
\end{prop}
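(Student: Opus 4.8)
The plan is to read everything off from the structure already built around the epimorphism $\pbf\co\Obigk\to\Omk$. The crucial remark is that, by Proposition \ref{PropBurnside}, the unit morphism of the Green functor $\Omk$ is $\pbf$ itself, so for a candidate $\Omk$-module structure $\ac\co\Omk\ot M\to M$ the unit axiom {\rm (ii)} of Definition \ref{DefModule} says exactly that $\ac\ci(\pbf\ot M)=\ell_M$. This one observation settles two of the three things to prove. For $(2)\Rightarrow(1)$: if such an $\ac$ exists then $\ell_M$ factors through $\pbf\ot M$, which is precisely condition {\rm (2)} of Corollary \ref{CorEquivDefl1}, hence $M\in\Ob(\M_d)$. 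For the uniqueness clause: any two $\Omk$-module structures $\ac,\ac\ppr$ satisfy $\ac\ci(\pbf\ot M)=\ell_M=\ac\ppr\ci(\pbf\ot M)$, and $\pbf\ot M$ is an epimorphism by Remark \ref{RemEpiTensor}, so $\ac=\ac\ppr$.

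For the existence direction $(1)\Rightarrow(2)$ I would take $M\in\Ob(\M_d)$ and set $\ac=\ell\ppr_M\co\Omk\ot M\ov{\cong}{\lra}M$, the isomorphism supplied by Corollary \ref{CorEquivDefl2}. Its defining relation $\ell\ppr_M\ci(\pbf\ot M)=\ell_M$ is, as noted above, exactly the unit axiom {\rm (ii)}. The associativity axiom {\rm (i)} — the commutativity of the square with $\as$ along the top, $m_{\Omk}\otd M=\ell\ppr_{\Omk}\otd M$ and $\Omk\otd\ell\ppr_M$ on the other two sides, and $\ell\ppr_M$ twice into $M$ — is literally diagram $(\ref{CommForM})$, whose commutativity is the content of Lemma \ref{CommutForMd} (using $m_{\Omk}=\ell\ppr_{\Omk}$ from Proposition \ref{PropBurnside}). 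Hence $(M,\ell\ppr_M)$ is an $\Omk$-module, and in fact one whose action morphism is an isomorphism.

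I do not expect a genuine obstacle here: every ingredient has already been assembled in the preceding sections, and the argument reduces to the three short steps above. The only point that deserves a word of care is the bookkeeping between $\ot$ (on $\M$, in which $\Omk$-modules are defined via Definition \ref{DefModule}) and $\otd$ (on $\M_d$, in which Lemma \ref{CommutForMd} is phrased): since $\otd$ is by construction the restriction of $\ot$, and all objects occurring — $\Omk$, $M$, $\Omk\ot M$, $\Omk\ot\Omk$, and their iterates — lie in $\M_d$ by Proposition \ref{PropTensorRestrict} (using $\Omk\in\Ob(\M_d)$), the two versions of each diagram literally coincide. Once this is recorded, nothing further is needed.
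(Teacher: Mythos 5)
Your proof is correct and follows essentially the same route as the paper's: both identify the unit axiom of Definition \ref{DefModule} with the factorization $\ac\ci(\pbf\ot M)=\ell_M$ of Corollary \ref{CorEquivDefl1}, obtain the associativity axiom from Lemma \ref{CommutForMd}, and deduce uniqueness from the epimorphicity of $\pbf\ot M$. Your extra remark on the agreement of $\ot$ and $\otd$ on objects of $\M_d$ is a harmless clarification the paper leaves implicit.
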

\begin{proof}
By Corollary \ref{CorEquivDefl1}, $M$ belongs to $\M_d$ if and only if there is a unique morphism $\ell\ppr_M\co\Omk\ot M\to M$ satisfying $\ell\ppr_M\ci(\pbf\ot M)=\ell_M$. This commutativity corresponds to the commutativity of {\rm (ii)} in Definition \ref{DefModule}, for $(\Gam,m,u)=(\Omk,\ell_{\Omk}\ppr,\pbf)$.
Moreover, if $M\in\Ob(\M_d)$, the commutativity of $(\ref{CommForM})$ means the commutativity of {\rm (i)} in Definition \ref{DefModule}.
Uniqueness of the $\Omk$-module structure on $M$ follows from the uniqueness of $\ell\ppr_M$ in Corollary \ref{CorEquivDefl1}.
\end{proof}

\begin{cor} \label{ThmEquivOModDefl}
There is an equivalence of categories $\M_d\simeq\OMod$.
\end{cor}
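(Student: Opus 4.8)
The plan is to construct the equivalence $\M_d\simeq\OMod$ explicitly from the two directions already assembled in the section, and then verify that they are mutually quasi-inverse using the uniqueness of the $\Omk$-module structure. First I would define the functor $\M_d\to\OMod$: given $M\in\Ob(\M_d)$, Proposition \ref{PropActOmega} equips $M$ with its unique $\Omk$-module structure $\ac_M=\ell\ppr_M\co\Omk\ot M\to M$; on a morphism $f\in\M_d(M,N)$, I would check that $f$ is automatically a morphism of $\Omk$-modules, i.e. $f\ci\ac_M=\ac_N\ci(\Omk\ot f)$. This last compatibility follows because both $f\ci\ell\ppr_M\ci(\pbf\ot M)$ and $\ac_N\ci(\Omk\ot f)\ci(\pbf\ot M)=\ac_N\ci(\pbf\ot N)\ci(\Obigk\ot f)$ equal $f\ci\ell_M=\ell_N\ci(\Obigk\ot f)$ by naturality of $\ell$, and $\pbf\ot M$ is an epimorphism by Remark \ref{RemEpiTensor}. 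Hence the assignment is functorial.

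In the other direction, the forgetful functor $\OMod\to\M$ sends any $\Omk$-module $M$ to its underlying object, which lies in $\M_d$ by Proposition \ref{PropActOmega} (the implication $(2)\Rightarrow(1)$); a morphism of $\Omk$-modules is in particular a morphism in $\M$, and it lands in $\M_d$ since $\M_d$ is a full subcategory. So we obtain a functor $\OMod\to\M_d$.

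Finally I would check these two functors are mutually inverse. Composing $\M_d\to\OMod\to\M_d$ is the identity on objects and morphisms, since forgetting the (uniquely determined) module structure recovers $M$ itself. Composing $\OMod\to\M_d\to\OMod$ sends an $\Omk$-module $(M,\ac)$ to $M$ equipped with its unique module structure from Proposition \ref{PropActOmega}; but $\ac$ already satisfies axiom (ii) of Definition \ref{DefModule}, which is precisely the equation $\ac\ci(\pbf\ot M)=\ell_M$ characterizing $\ell\ppr_M$ in Corollary \ref{CorEquivDefl1}, so by the uniqueness clause there $\ac=\ell\ppr_M$. Thus this composite is also the identity, and the two functors give an isomorphism of categories, in particular an equivalence $\M_d\simeq\OMod$. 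The only point requiring any care is the verification that every morphism of $\M_d$ respects the canonical module structures; all of it reduces, via the epimorphicity of $\pbf\ot M$, to the naturality of $\ell$ already in hand, so I do not expect a genuine obstacle here.
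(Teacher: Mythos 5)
Your proposal is correct and follows essentially the same route as the paper: both identify $\M_d$ and $\OMod$ as subcategories of $\M$ with the same object class via Proposition \ref{PropActOmega}, and reduce everything to checking that every morphism in $\M_d$ automatically respects the canonical $\Omk$-actions, which you verify (as the paper does, by invoking the naturality of $\ell\ppr$) by precomposing with the epimorphism $\pbf\ot M$ and using the naturality of $\ell$.
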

\begin{proof}
By Proposition \ref{PropActOmega}, each of these categories can be viewed as a subcategory of $\M$, whose class of objects is equal to $\Ob(\M_d)$. What remains to show is that the inclusion
\[ \OMod\hookrightarrow\M_d \]
is full. Namely, it suffices to show the following.
\begin{itemize}
\item[-] For any $M,N\in\Ob(\M_d)$, any morphism $f\in\M_d(M,N)$ makes the following diagram commutative.
\[
\xy
(-12,7)*+{\Omk\ot M}="0";
(12,7)*+{\Omk \ot N}="2";
(-12,-7)*+{M}="4";
(12,-7)*+{N}="6";
{\ar^{\Omk\ot f} "0";"2"};
{\ar_{\ell_M\ppr} "0";"4"};
{\ar^{\ell_N\ppr} "2";"6"};
{\ar_{f} "4";"6"};
{\ar@{}|\circlearrowright "0";"6"};
\endxy
\]
\end{itemize}
This follows from the naturality of $\ell\ppr$.
\end{proof}

\section{Monoidal equivalence $\M_d\simeq\BFk$}

In this section, we show {\rm (III)}.
The following has been shown in \cite{N_BisetMackey}.
\begin{fact}\label{FactEquivN}(\cite[section 6, Theorem 6.3.11]{N_BisetMackey})
An equivalence of categories $\Phi\co\M_d\to\BFk$ is given in the following way.
\begin{enumerate}
\item Let $M$ be an object in $\M_d$. Then an object $\Phi(M)$ in $\BFk$ is associated to $M$ as follows.
\begin{itemize}
\item[-] For any finite group $G$, put $\Phi(M)(G)=M(\frac{\pt}{G})$.
\item[-] For any $H$-$G$-biset $U$, put $\Phi(M)(U)=M(s_{(U)})$. Here, $s_{(U)}\in\Bcal(G,H)$ is defined as
\[ s_{(U)}=[\pth\ov{\frac{\ttt}{\prh}}{\lla}\frac{U}{H\ti G}\ov{\frac{\ttt}{\prg}}{\lra}\ptg] \]
where $U$ is regarded as an $H\ti G$-set by $(h,g)u=hug\iv$ $(\fa(h,g)\in H\ti G)$.

By the linearity, this is extended to any morphism in $\Bcal$.
\end{itemize}
\item Let $\varphi\co M\to N$ be a morphism in $\M_d$. Then a morphism $\Phi(\varphi)\co \Phi(M)\to \Phi(N)$ is defined by
\[ \Phi(\varphi)=\{ \varphi_{\ptg}\co M(\ptg)\to N(\ptg) \}_{G\in\Ob(\Bcal)}. \]
\end{enumerate}
\end{fact}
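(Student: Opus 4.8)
The plan is to realize $\M_d$ and $\OMod$ as two subcategories of $\M$ that share the same class of objects, and then to show that they agree on morphisms as well. The key input is Proposition \ref{PropActOmega}: an object $M\in\Ob(\M)$ carries an $\Omk$-module structure if and only if $M\in\Ob(\M_d)$, and in that case the structure is unique. Concretely, the forgetful functor $U\co\OMod\to\M$, sending a module $(M,\ac_M)$ to its underlying object $M$, is faithful and injective on objects, and its image on objects is exactly $\Ob(\M_d)$; by Corollary \ref{CorEquivDefl2} the unique action on each $M\in\Ob(\M_d)$ is the isomorphism $\ell_M\ppr\co\Omk\ot M\to M$ supplied there. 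Thus $U$ factors through $\M_d\hookrightarrow\M$, giving a comparison functor $\Psi\co\OMod\to\M_d$ that is bijective on objects.

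Faithfulness of $\Psi$ is immediate, since a morphism of $\Omk$-modules and its image in $\M_d$ are literally the same underlying morphism of $\M$. Hence the whole statement reduces to showing that $\Psi$ is full: given $M,N\in\Ob(\M_d)$ and an arbitrary $f\in\M_d(M,N)=\M(M,N)$, one must verify that $f$ automatically respects the $\Omk$-actions, i.e. that the square
\[
\xy
(-12,7)*+{\Omk\ot M}="0";
(12,7)*+{\Omk\ot N}="2";
(-12,-7)*+{M}="4";
(12,-7)*+{N}="6";
{\ar^{\Omk\ot f} "0";"2"};
{\ar_{\ell_M\ppr} "0";"4"};
{\ar^{\ell_N\ppr} "2";"6"};
{\ar_{f} "4";"6"};
{\ar@{}|\circlearrowright "0";"6"};
\endxy
\]
commutes.

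This commutativity is precisely the naturality of the family $\{\ell_M\ppr\}_{M\in\Ob(\M_d)}$ in $M$, which was recorded just after Corollary \ref{CorEquivDefl2}. I expect this to be the only substantive point, and even it is mild: each $\ell_M\ppr$ is characterized by $\ell_M\ppr\ci(\pbf\ot M)=\ell_M$, and $\pbf\ot M$ is an epimorphism by Remark \ref{RemEpiTensor}, so precomposing the square above with $\pbf\ot M$ collapses it, via functoriality of $-\ot-$, to the naturality square for the original $\ell$ coming from the monoidal structure of Corollary \ref{PropPS1}, which holds. With fullness established, $\Psi$ is a fully faithful functor that is bijective on objects, hence an equivalence, and we conclude $\M_d\simeq\OMod$.
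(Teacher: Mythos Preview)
Your proposal proves the wrong statement. The Fact in question, \ref{FactEquivN}, asserts an equivalence $\Phi\co\M_d\to\BFk$ between deflative Mackey functors and \emph{biset functors}, and describes explicitly how $\Phi$ is built. Your argument instead establishes $\M_d\simeq\OMod$, the equivalence with the category of $\Omk$-modules. That is Corollary \ref{ThmEquivOModDefl}, not Fact \ref{FactEquivN}; the two target categories $\BFk$ and $\OMod$ are not the same, and nothing in your write-up touches bisets, the biset category $\Bcal$, or the span $s_{(U)}$ that the Fact is describing.

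As a proof of Corollary \ref{ThmEquivOModDefl}, your argument is correct and matches the paper's own proof essentially line for line: both identify $\OMod$ and $\M_d$ as subcategories of $\M$ with the same objects via Proposition \ref{PropActOmega}, and both reduce to fullness, which follows from the naturality of $\ell\ppr$. But Fact \ref{FactEquivN} is a different assertion. In this paper it is not proved at all; it is imported from \cite[Theorem 6.3.11]{N_BisetMackey}, and the Fact merely records the construction of $\Phi$. To address it you would need to verify that $\Phi(M)$ is a well-defined biset functor (i.e.\ that $U\mapsto M(s_{(U)})$ respects composition of bisets and is additive), that $\Phi$ is functorial, and that it is fully faithful and essentially surjective---arguments that live in the cited reference and involve the span category and the deflativity condition in a way your proposal does not engage with.
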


\begin{rem}
$\Phi$ sends $\Omk\in\Ob(\M_d)$ to the Burnside biset functor, i.e., the unit for the tensor product in $\BFk$.
\end{rem}

\begin{prop}\label{PropPermBF}
For any $M\in\Ob(\M_d)$ and $G\in\Ob(\Bcal)$, we have an isomorphism
\[ \varpi^{(M,G)}\co\Phi(M_{\ptg})\ov{\cong}{\lra}\Phi(M)_G \]
natural in $M$, where the right hand side is given by the Yoneda-Dress construction for biset functors defined in \cite[Definition 8.2.3]{Bouc_biset}.

Moreover, this makes the following diagram commutative for any $G\ppr$-$G$-biset $V$.
\begin{equation}\label{XYZ}
\xy
(-14,7)*+{\Phi(M_{\ptg})}="0";
(14,7)*+{\Phi(M)_G}="2";
(-14,-7)*+{\Phi(M_{\frac{\pt}{G\ppr}})}="4";
(14,-7)*+{\Phi(M)_{G\ppr}}="6";
{\ar^{\varpi^{(M,G)}}_{\cong} "0";"2"};
{\ar_{\Phi(M_{s(V)})} "0";"4"};
{\ar^{\Phi(M)_V} "2";"6"};
{\ar_{\varpi^{(M,G\ppr)}} "4";"6"};
{\ar@{}|\circlearrowright "0";"6"};
\endxy
\end{equation}
Here, $\Phi(M)_V$ denotes the morphism of biset functors given by
\[ \{ \Phi(M)(H\ti V)\co \Phi(M)(H\ti G)\to\Phi(M)(H\ti G\ppr) \}_{H\in\Ob(\Bcal)}. \]
\end{prop}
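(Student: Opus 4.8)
The plan is to construct the isomorphism $\varpi^{(M,G)}$ explicitly by unwinding both sides in terms of the span category $\Scal$ and the equivalence of Fact \ref{MackAdd}. On the left, $\Phi(M_{\ptg})$ evaluated at a finite group $H$ is $M_{\ptg}(\frac{\pt}{H})=M(\frac{\pt}{H}\ti\ptg)=M(\frac{\pt}{H\ti G})$, using Definition \ref{DefDressbyX} and the description of the tensor product (= product) in $\Scal$ coming from Fact \ref{Prop2Pullback} with $\zk=\pte$. On the right, by \cite[Definition 8.2.3]{Bouc_biset} the Yoneda--Dress construction gives $\Phi(M)_G(H)=\Phi(M)(H\ti G)=M(\frac{\pt}{H\ti G})$. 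So the two sides agree object-wise, and I would simply take $\varpi^{(M,G)}_H=\id_{M(\frac{\pt}{H\ti G})}$, or rather the canonical identification induced by the biproduct isomorphism $\ptg\ti\pth\cong\frac{\pt}{G\ti H}$ from Fact \ref{Prop2Pullback}. Naturality in $M$ is then immediate, since a morphism $\varphi\co M\to N$ in $\M_d$ induces $\varphi_{\frac{\pt}{H\ti G}}$ on both sides by definition of $\Phi$ and of the Yoneda--Dress construction on morphisms.

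The substantive part is the commutativity of \eqref{XYZ}, i.e. checking that this object-wise identification is compatible with the action of a $G\ppr$-$G$-biset $V$. I would verify it at each $H\in\Ob(\Bcal)$. Going around the top-right, $\Phi(M)_V$ at $H$ is $\Phi(M)(H\ti V)=M(s_{(H\ti V)})$, where $s_{(H\ti V)}\in\Bcal(H\ti G,H\ti G\ppr)$ is the span $\frac{\pt}{H\ti G\ppr}\ov{\frac{\ttt}{\pro}}{\lla}\frac{H\ti V}{(H\ti G\ppr)\ti(H\ti G)}\ov{\frac{\ttt}{\pro}}{\lra}\frac{\pt}{H\ti G}$, with $H\ti V$ viewed as an $(H\ti G\ppr)$-$(H\ti G)$-biset in the diagonal-on-$H$ fashion dictated by the Yoneda--Dress construction. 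Going around the left-bottom, $M_{s_{(V)}}$ at $H$ is $\Dbf(s_{(V)},M)$ evaluated at $\frac{\pt}{H}$, which by Definition \ref{DefDressbyX} is $M$ applied to $\frac{\pt}{H}\ti s_{(V)}\in\Scal(\frac{\pt}{H}\ti\ptg,\frac{\pt}{H}\ti\frac{\pt}{G\ppr})$. So the key step is the identification of spans
\[ \tfrac{\pt}{H}\ti s_{(V)}\ \cong\ s_{(H\ti V)} \]
in $\Scal$, under the biproduct isomorphisms $\frac{\pt}{H}\ti\ptg\cong\frac{\pt}{H\ti G}$ and $\frac{\pt}{H}\ti\frac{\pt}{G\ppr}\cong\frac{\pt}{H\ti G\ppr}$. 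This is a direct computation: the product $\frac{\pt}{H}\ti s_{(V)}$ is formed by Definition \ref{DefSpanCat} (tensor in $\Scal$), whose apex is the $H\ti(G\ppr\ti G)$-set obtained from $\frac{\pt}{H}\times\frac{V}{G\ppr\ti G}$, and one checks this is $(H\ti G\ppr)$-$(H\ti G)$-equivariantly isomorphic to $H\ti V$ with the action $(h',g')(h,v)(h,g)\iv$ matching the rule in the Yoneda--Dress construction. Then applying $M$ and using that $M$ sends isomorphic spans to equal maps gives the commutative square.

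The main obstacle I anticipate is purely bookkeeping: carefully matching the several group actions — the diagonal $H$-action, the $G\ppr$- and $G$-actions, and the biproduct identifications from Fact \ref{Prop2Pullback} — so that the span isomorphism above is literally correct rather than correct-up-to-a-twist. In particular one must be attentive to the left/right conventions (the biset $U$ is made into an $H\ti G$-set via $(h,g)u=hug\iv$, per Fact \ref{FactEquivN}, and $V$ enters both through $s_{(V)}$ and through $H\ti V$), and to the fact that the product in $\Scal$ is computed via a natural weak pullback over $\pte$, i.e. literally a Cartesian product of underlying sets with the product group acting. Once the span isomorphism $\frac{\pt}{H}\ti s_{(V)}\cong s_{(H\ti V)}$ is nailed down, everything else — naturality in $M$, well-definedness independent of the representing biset, and additivity in $V$ — follows formally from functoriality of $M$ on $\Scal$ and the already-established properties of $\Phi$ and $\Dbf$.
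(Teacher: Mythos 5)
Your construction is essentially the paper's own: the paper likewise unwinds both sides to $M(\frac{\pt}{H\ti G})$, takes $\varpi^{(M,G)}_H=M(\Tbf_{\und{\gamma}_{H,G}})$ for the canonical isomorphism $\und{\gamma}_{H,G}\co\pth\ti\ptg\ov{\cong}{\lra}\frac{\pt}{H\ti G}$ in $\C$, and proves the commutativity of $(\ref{XYZ})$ exactly by exhibiting an isomorphism of spans $\ptk\ti s_{(V)}\cong s_{(K\ti V)}$ compatible with the $\und{\gamma}$'s --- which is precisely your key identification.

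The one step you skip is verifying that $\varpi^{(M,G)}$ is a morphism in $\BFk$ at all: the family $\{\varpi^{(M,G)}_H\}_{H}$ must commute with $\Phi(M_{\ptg})(U)=M(s_{(U)}\ti\ptg)$ and $\Phi(M)_G(U)=M(s_{(U\ti G)})$ for every $K$-$H$-biset $U$. This is not the naturality in $M$ you mention, and it is not subsumed by $(\ref{XYZ})$, which concerns bisets acting in the $G$-variable rather than the $H$-variable. It requires the mirror-image span isomorphism $\frac{U}{K\ti H}\ti\ptg\cong\frac{U\ti G}{(K\ti G)\ti(H\ti G)}$ compatible with $\und{\gamma}_{K,G}$ and $\und{\gamma}_{H,G}$, which the paper checks explicitly. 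Your technique handles it verbatim, but the check must be stated and carried out before the claimed isomorphism in $\BFk$ makes sense.
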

\begin{proof}
By definition, $\Phi(M)_G$ and $\Phi(M_{\ptg})$ are given as follows.
\begin{itemize}
\item[{\rm (i)}] For any $H\in\Ob(\Bcal)$, we have
\[ \Phi(M)_G(H)=M(\frac{\pt}{H\ti G}),\quad \Phi(M_{\ptg})(H)=M(\pth\ti\ptg). \]
\item[{\rm (ii)}] For any $K$-$H$-biset $U$, we have
\[ \Phi(M)_G(U)=M(s_{(U\ti G)}),\quad \Phi(M_{\ptg})(U)=M(s_{(U)}\ti\ptg), \]
where $U\ti G$ is the $(K\ti G)$-$(H\ti G)$-biset with an action given by
\begin{eqnarray*}
&(k,g_1)(u,g)(h,g_2)=(kuh,g_1gg_2)&\\
&(\fa (k,g_1)\in K\ti G,\ \fa (u,g)\in U\ti G,\ \fa (h,g_2)\in H\ti G).&
\end{eqnarray*}
\end{itemize}

Denote the isomorphism in $\C$
\[ \pth\ti\ptg\ov{\cong}{\lra}\frac{\pt}{H\ti G} \]
by $\und{\gamma}_{H,G}$. This is natural in $G$ and $H$. It induces
\[ \varpi^{(M,G)}_H=M(\Tbf_{\und{\gamma}_{H,G}})\co\Phi(M_{\ptg})(H)\ov{\cong}{\lra}\Phi(M)_G(H). \]
(For the symbol $\Tbf$, see Remark \ref{RemTRemT}.)

There exists an isomorphism in $\C$ (i.e. the composition of the following)
\[ \frac{U}{K\ti H}\ti\ptg\ov{\cong}{\lra}%
\frac{U}{K\ti H}\ti\frac{G}{G\ti G}\ov{\cong}{\lra}%
\frac{U\ti G}{(K\ti G)\ti(H\ti G)} \]
which fits into the following commutative diagram in $\C$.
\[
\xy
(-28,8)*+{\ptk\ti\ptg}="0";
(0,8)*+{\frac{U}{K\ti H}\ti\ptg}="2";
(28,8)*+{\pth\ti\ptg}="4";
(-28,-8)*+{\frac{\pt}{K\ti G}}="10";
(0,-8)*+{\frac{U\ti G}{(K\ti G)\ti(H\ti G)}}="12";
(28,-8)*+{\frac{\pt}{H\ti G}}="14";
{\ar_{\frac{\ttt}{\pro^{(K)}}\ti\ptg} "2";"0"};
{\ar^(0.56){\frac{\ttt}{\pro^{(H)}}\ti\ptg} "2";"4"};
{\ar_{\und{\gamma}_{K,G}}^{\cong} "0";"10"};
{\ar^{\cong} "2";"12"};
{\ar^{\und{\gamma}_{H,G}}_{\cong} "4";"14"};
{\ar^(0.6){\frac{\ttt}{\pro^{(K\ti G)}}} "12";"10"};
{\ar_(0.6){\frac{\ttt}{\pro^{(H\ti G)}}} "12";"14"};
{\ar@{}|\circlearrowright "0";"12"};
{\ar@{}|\circlearrowright "2";"14"};
\endxy
\]
This makes the following diagram commutative,
\[
\xy
(-18,8)*+{\Phi(M_{\ptg})(K)}="0";
(18,8)*+{\Phi(M_{\ptg})(H)}="2";
(-18,-8)*+{\Phi(M)_G(K)}="4";
(18,-8)*+{\Phi(M)_G(H)}="6";
{\ar_{\Phi(M_{\ptg})(U)} "2";"0"};
{\ar_{\varpi^{(M,G)}_K}^{\cong} "0";"4"};
{\ar^{\varpi^{(M,G)}_H}_{\cong} "2";"6"};
{\ar^{\Phi(M)_G(U)} "6";"4"};
{\ar@{}|\circlearrowright "0";"6"};
\endxy
\]
and thus gives isomorphism $\varpi^{(M,G)}=\{\varpi^{(M,G)}_H\}_{H\in\Ob(\Bcal)}\co\Phi(M_{\ptg})\ov{\cong}{\lra}\Phi(M)_G$. Naturality in $M$ can be checked in a straightforward way.

Similarly, the commutativity of $(\ref{XYZ})$ follows from the existence of the following commutative diagram in $\C$.
\[
\xy
(-28,8)*+{\ptk\ti\ptg}="0";
(0,8)*+{\ptk\ti \frac{V}{G\ti G\ppr}}="2";
(28,8)*+{\ptk\ti\frac{\pt}{G\ppr}}="4";
(-28,-8)*+{\frac{\pt}{K\ti G}}="10";
(0,-8)*+{\frac{K\ti V}{(K\ti G)\ti(K\ti G\ppr)}}="12";
(28,-8)*+{\frac{\pt}{K\ti G\ppr}}="14";
{\ar_{\ptk\ti\frac{\ttt}{\pro^{(G)}}} "2";"0"};
{\ar^(0.56){\ptk\ti\frac{\ttt}{\pro^{(G\ppr)}}} "2";"4"};
{\ar_{\und{\gamma}_{K,G}}^{\cong} "0";"10"};
{\ar^{\cong} "2";"12"};
{\ar^{\und{\gamma}_{K,G\ppr}}_{\cong} "4";"14"};
{\ar^(0.6){\frac{\ttt}{\pro^{(K\ti G)}}} "12";"10"};
{\ar_(0.6){\frac{\ttt}{\pro^{(K\ti G\ppr)}}} "12";"14"};
{\ar@{}|\circlearrowright "0";"12"};
{\ar@{}|\circlearrowright "2";"14"};
\endxy
\]
\end{proof}

\begin{prop}\label{PropMonMB}
For any $M,N\in\Ob(\M_d)$, there is a natural isomorphism of biset functors
\[ \xi^{(M,N)}\co\Phi(\HH_d(M,N)))\ov{\cong}{\lra}\Hcal(\Phi(M),\Phi(N)). \]
Here, $\Hcal\co(\BFk)^{\op}\ti\BFk\to\BFk$ denotes the internal Hom functor for biset functors defined in \cite[Definition 8.3.1]{Bouc_biset}.
\end{prop}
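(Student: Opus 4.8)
The plan is to build $\xi^{(M,N)}$ component by component and then verify it is a morphism of biset functors and natural in both arguments. First I would make both sides explicit. For a finite group $G$, Definition \ref{DefHH} together with the fullness of $\M_d\se\M$ gives $\Phi(\HH_d(M,N))(G)=(\HH_d(M,N))(\ptg)=\M(M,N_{\ptg})$, whereas the Yoneda--Dress construction underlying the internal Hom $\Hcal$ gives $\Hcal(\Phi(M),\Phi(N))(G)=\BFk(\Phi(M),\Phi(N)_G)$. Since $N_{\ptg}\in\Ob(\M_d)$ by Lemma \ref{LemDressDefl} {\rm (2)}, the full faithfulness of the equivalence $\Phi$ (Fact \ref{FactEquivN}) yields a bijection $\Phi\co\M(M,N_{\ptg})\ov{\cong}{\lra}\BFk(\Phi(M),\Phi(N_{\ptg}))$, and Proposition \ref{PropPermBF} supplies an isomorphism of biset functors $\varpi^{(N,G)}\co\Phi(N_{\ptg})\ov{\cong}{\lra}\Phi(N)_G$. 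I would then define $\xi^{(M,N)}_G$ as the composite $\M(M,N_{\ptg})\ov{\Phi}{\lra}\BFk(\Phi(M),\Phi(N_{\ptg}))\ov{\varpi^{(N,G)}\ci-}{\lra}\BFk(\Phi(M),\Phi(N)_G)$, which is automatically a $k$-module isomorphism.

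Next I would show that $\xi^{(M,N)}=\{\xi^{(M,N)}_G\}_G$ is compatible with the action of an arbitrary $G\ppr$-$G$-biset $V$. On the source this action is $N_{s_{(V)}}\ci-\co\M(M,N_{\ptg})\to\M(M,N_{\frac{\pt}{G\ppr}})$ by Definition \ref{DefHH} and the description of $\Phi$ on morphisms in Fact \ref{FactEquivN}, and on the target it is $\Phi(N)_V\ci-$ by \cite[Definition 8.3.1]{Bouc_biset}. Evaluating the relevant square at an arbitrary $f\in\M(M,N_{\ptg})$ and using functoriality of $\Phi$, the commutativity reduces to the single identity $\Phi(N)_V\ci\varpi^{(N,G)}=\varpi^{(N,G\ppr)}\ci\Phi(N_{s_{(V)}})$, which is exactly diagram $(\ref{XYZ})$ of Proposition \ref{PropPermBF} with $N$ playing the role of $M$ there. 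As each component $\xi^{(M,N)}_G$ is bijective, $\xi^{(M,N)}$ is then an isomorphism in $\BFk$.

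Finally I would check naturality of $\xi^{(M,N)}$ in $M$ and $N$. Naturality in $M$ is immediate from functoriality of $\Phi$, since precomposing with $\varphi\in\M_d(M\ppr,M)$ translates to $\Phi(f\ci\varphi)=\Phi(f)\ci\Phi(\varphi)$; naturality in $N$ combines the naturality of $\varpi^{(-,G)}$ in its first argument asserted in Proposition \ref{PropPermBF} with functoriality of $\Phi$ applied to $\Dbf(\ptg,\psi)$ for $\psi\in\M_d(N,N\ppr)$. I expect the only genuinely delicate point to be a matching of conventions: one must confirm that the biset action on $\BFk(\Phi(M),\Phi(N)_G)$ in the sense of \cite[Definition 8.3.1]{Bouc_biset} is post-composition with the Yoneda--Dress morphism $\Phi(N)_V$ used in Proposition \ref{PropPermBF}, and one should note that deflativity (equivalently, the passage to $\Omk$-modules) never intervenes, because $\HH_d(M,N)$ is literally the restriction of $\HH(M,N)$ and $N_{\ptg}$ stays in $\M_d$. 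Once these identifications are in place the remaining work is a diagram chase driven entirely by Proposition \ref{PropPermBF}, and in particular it does not require a monoidal structure on $\Phi$.
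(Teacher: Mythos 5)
Your construction of $\xi^{(M,N)}_G$ as the composite of $\Phi$ on hom-sets with post-composition by $\varpi^{(N,G)}$, and your verification of compatibility with biset actions via diagram $(\ref{XYZ})$ of Proposition \ref{PropPermBF}, is exactly the paper's proof. The approach is the same; the only cosmetic difference is that the paper writes $\M_d(M,N_{\ptg})$ where you write $\M(M,N_{\ptg})$, which coincide by fullness of $\M_d\se\M$ as you note.
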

\begin{proof}
Put $P=\Phi(\HH_d(M,N))$ and $Q=\Hcal(\Phi(M),\Phi(N))$ for simplicity.
For any $G\in\Ob(\Bcal)$, we define $\xi^{(M,N)}_G$ to be the composition of
\begin{eqnarray*}
P(G)&=&\HH_d(M,N)(\ptg)%
\ =\ \M_d(M,N_{\ptg})\\
&\underset{\Phi}{\ov{\cong}{\lra}}&\BFk(\Phi(M),\Phi(N_{\ptg}))%
\underset{\varpi^{(N,G)}\ci-}{\ov{\cong}{\lra}}\BFk(\Phi(M),\Phi(N)_G)=Q(G),
\end{eqnarray*}
where $\Phi\co\M_d(M,N_{\ptg})\ov{\cong}{\lra}\BFk(\Phi(M),\Phi(N_{\ptg}))$ is the isomorphism between the sets of morphisms induced from the equivalence $\Phi\co\M_d\ov{\simeq}{\lra}\BFk$.

It remains to show the compatibility with respect to morphisms in $\Bcal$.
Let $U$ be any $H$-$G$-biset. The homomorphisms
\[ P(U)\co P(G)\to P(H)\quad\text{and}\quad Q(U)\co Q(G)\to Q(H)\]
are given by
\begin{eqnarray*}
\HH_d(M,N)(s_{(U)})=(N_{s_{(U)}}\ci-)&\co&\M_d(M,N_{\ptg})\to\M_d(M,N_{\pth}),\\
\Hcal(\Phi(M),\Phi(N))(U)=(\Phi(N)_U\ci-)&\co&\BFk(\Phi(M),\Phi(N)_G)\to\BFk(\Phi(M),\Phi(N)_H)
\end{eqnarray*}
by their definitions.
Thus the diagram $(\ref{XYZ})$ induces the following commutative diagram.
\[
\xy
(-42,8)*+{\M_d(M,N_{\ptg})}="0";
(-4,8)*+{\BFk(\Phi(M),\Phi(N_{\ptg}))}="2";
(42,8)*+{\BFk(\Phi(M),\Phi(N)_G)}="4";
(-42,-8)*+{\M_d(M,N_{\pth})}="10";
(-4,-8)*+{\BFk(\Phi(M),\Phi(N_{\pth}))}="12";
(42,-8)*+{\BFk(\Phi(M),\Phi(N)_H)}="14";
{\ar^(0.44){\Phi}_(0.44){\cong} "0";"2"};
{\ar^{\varpi^{(N,G)}\ci-}_{\cong} "2";"4"};
{\ar_{P(U)}^{=(N_{s_{(U)}}\ci-)} "0";"10"};
{\ar^{\Phi(N_{s_{(U)}})\ci-} "2";"12"};
{\ar_{Q(U)}^{=(\Phi(N)_U\ci-)} "4";"14"};
{\ar_(0.44){\Phi}^(0.44){\cong} "10";"12"};
{\ar_{\varpi^{(N,H)}\ci-}^{\cong} "12";"14"};
{\ar@{}|\circlearrowright "0";"12"};
{\ar@{}|\circlearrowright "2";"14"};
\endxy
\]
This means $Q(U)\ci\xi^{(M,N)}_G=\xi^{(M,N)}_H\ci P(U)$. By linearity, it follows that $\xi^{(M,N)}$ is compatible with any morphism in $\Bcal$.
\end{proof}

\begin{thm}\label{ThmMonMB}
The equivalence $\Phi\co\M_d\ov{\simeq}{\lra}\BFk$ is a monoidal equivalence.
\end{thm}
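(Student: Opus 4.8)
The plan is to show that $\Phi\co\M_d\ov{\simeq}{\lra}\BFk$ upgrades to a monoidal equivalence by producing a natural isomorphism $\Phi(M\otd N)\cong\Phi(M)\ot_{\Bcal}N)$ compatible with the associativity, symmetry, and unit constraints on both sides, plus the unit isomorphism $\Phi(\Omk)\cong$ (Burnside biset functor). The unit is already recorded in the Remark following Fact \ref{FactEquivN}. For the tensor, rather than constructing the comparison map directly from the coend description in Remark \ref{DefTensorObjValue}, I would exploit closedness: by Corollary \ref{CorHomDefl} and Proposition \ref{PropDeflMon}, $-\otd N$ is left adjoint to $\HH_d(N,-)$ on $\M_d$, and by \cite[Definition 8.3.1]{Bouc_biset} the biset-functor internal Hom $\Hcal(\Phi(N),-)$ is right adjoint to $-\ot_{\Bcal}\Phi(N)$ on $\BFk$. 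Proposition \ref{PropMonMB} gives a natural isomorphism $\Phi\ci\HH_d(N,-)\cong\Hcal(\Phi(N),-)\ci\Phi$. Since $\Phi$ is an equivalence, transposing this isomorphism of right adjoints through the adjunctions yields a natural isomorphism of the left adjoints, i.e. $\Phi(-\otd N)\cong\Phi(-)\ot_{\Bcal}\Phi(N)$, which is exactly the required structural isomorphism $\mu_{M,N}\co\Phi(M\otd N)\ov{\cong}{\lra}\Phi(M)\ot_{\Bcal}\Phi(N)$.

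Having obtained $\mu_{M,N}$, the remaining work is to check the coherence axioms of a monoidal functor (\cite[XI.2]{MacLane}): compatibility of $\mu$ with the associators $\as$, with the symmetries $\sym$, and with the left/right unitors $\ell\ppr, r\ppr$ via the unit isomorphism $\Phi(\Omk)\cong\Bcal$-Burnside. Because $\mu$ arises from the adjoint-transpose construction, these hexagon/pentagon/triangle identities for $\Phi$ reduce to the corresponding coherences already known to hold: on the $\M_d$ side they hold by Proposition \ref{PropDeflMon} (which established $(\M_d,\otd,\Omk)$ is symmetric monoidal), and on the $\BFk$ side by \cite[II.8]{Bouc_biset}. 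One spells this out by diagram chasing, using naturality of the adjunction units/counits and the fact that $\Phi$ and the $\varpi^{(N,G)}$ are isomorphisms, so that every square reduces to one already known to commute. The uniqueness clauses (epimorphicity of $\pbf$, $\pbf\ot M$, etc.) recorded in Corollaries \ref{CorEquivDefl1}, \ref{CorEquivDefl2} and Definition \ref{Defr} let us identify the transported unitors with $\ell\ppr, r\ppr$ without further computation.

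I expect the main obstacle to be bookkeeping rather than a conceptual difficulty: verifying that the $\mu_{M,N}$ produced by adjoint transposition is genuinely natural in \emph{both} variables simultaneously (Proposition \ref{PropMonMB} is stated as naturality in $M$ with $N$ fixed, so one must also argue functoriality in $N$, using that $\HH_d$ is a bifunctor and $\varpi^{(N,G)}$ is natural in $N$), and then tracking $\mu$ through the associativity hexagon. A clean way to sidestep much of this is to invoke the general principle that a symmetric monoidal structure transported across an equivalence along \emph{any} natural family of isomorphisms $\Phi(M\otd N)\cong\Phi(M)\ot\Phi(N)$ and $\Phi(\Omk)\cong\Bbbb$ automatically satisfies the coherences, provided the comparison maps are compatible with the closed structure — which here is guaranteed by the construction via Proposition \ref{PropMonMB}. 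Thus the proof is: (1) define $\mu$ by transposing the isomorphism of Proposition \ref{PropMonMB} through the two adjunctions; (2) observe $\Phi(\Omk)\cong\Bbbb$ from the Remark after Fact \ref{FactEquivN}; (3) check the monoidal-functor axioms by reducing each to the known coherence in $\M_d$ or in $\BFk$; (4) conclude that $\Phi$, being an equivalence with a monoidal structure, is a monoidal equivalence.
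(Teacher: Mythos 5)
Your overall strategy is the same as the paper's: transfer the tensor product through the internal-Hom comparison of Proposition \ref{PropMonMB} and the adjunctions $-\otd N\lt\HH_d(N,-)$ and $-\ot_{\Bcal}\Phi(N)\lt\Hcal(\Phi(N),-)$, then invoke ``the adjoint property'' to conclude. The construction of $\mu_{M,N}$ by adjoint transposition is fine, and your observation that naturality in $N$ must be supplied separately is a genuine point worth addressing.

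However, there is a gap at the heart of your step (3). You claim the monoidal coherence axioms for $\mu$ ``reduce to the corresponding coherences already known to hold'' in $\M_d$ and in $\BFk$, and that compatibility of the comparison maps with the closed structure ``is guaranteed by the construction via Proposition \ref{PropMonMB}.'' Neither claim is correct as stated. The coherences internal to each monoidal category say nothing about how $\Phi$ relates the two structures, and Proposition \ref{PropMonMB} only provides the isomorphism $\xi^{(M,N)}\co\Phi(\HH_d(M,N))\ov{\cong}{\lra}\Hcal(\Phi(M),\Phi(N))$ as a natural isomorphism of functors --- it does \emph{not} assert that $\xi$ is compatible with the closed structures on the two sides. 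That compatibility is precisely the substantive content of the theorem, and it is what the paper verifies: commutativity of $\xi^{(M,M)}$ with the unit morphisms $\Omk\ov{v}{\lra}\HH_d(M,M)$ and $\Phi(\Omk)\ov{v\ppr}{\lra}\Hcal(\Phi(M),\Phi(M))$; of $\xi^{(\Omk,M)}$ with the evaluations $\HH_d(\Omk,M)\ov{j}{\lra}M$ and $\Hcal(\Phi(\Omk),\Phi(M))\ov{j\ppr}{\lra}\Phi(M)$; and of $\xi$ with the composition morphisms $\HH_d(M,N)\to\HH_d(\HH_d(L,M),\HH_d(L,N))$. Each of these is checked by a concrete computation --- evaluating at $G\in\Ob(\Bcal)$ and at elements such as $v_{\ptg}(A)=M_{s_{(A)}}$ and $j_{\ptg}(\varphi)=\varphi_{\pte}(\pt)$, and appealing to the commutative square $(\ref{XYZ})$ of Proposition \ref{PropPermBF}. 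Only once these closed-functor axioms are established does the Eilenberg--Kelly correspondence let you conclude that the transposed $\mu$ satisfies the monoidal-functor coherences. Your proposal assumes exactly the thing that needs to be proved, so as written it does not constitute a proof.
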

\begin{proof}
Since $\Phi(\Omk)$ is equal to the Burnside biset functor, $\Phi$ preserves the units for the tensor products.
By the adjoint property, it remains to confirm that $\Phi$ satisfies the following compatibility conditions.
\begin{enumerate}
\item For any $M\in\Ob(\M_d)$, for the natural morphisms
\[ \Omk\ov{v}{\lra}\HH_d(M,M),\quad\Phi(\Omk)\ov{v\ppr}{\lra}\Hcal(\Phi(M),\Phi(M)), \]
the following diagram is commutative.
\[
\xy
(-16,6)*+{\Phi(\Omk)}="0";
(16,6)*+{\Phi(\HH_d(M,M))}="2";
(-2,-6)*+{}="3";
(16,-8)*+{\Hcal(\Phi(M),\Phi(M))}="4";
{\ar^(0.4){\Phi(v)} "0";"2"};
{\ar_(0.4){v\ppr} "0";"4"};
{\ar^(0.4){\xi^{(M,M)}}_{\cong} "2";"4"};
{\ar@{}|\circlearrowright "2";"3"};
\endxy
\]
\item For any $M\in\Ob(\M_d)$, for the natural morphisms
\[ \HH_d(\Omk,M)\ov{j}{\lra}M,\quad\Hcal(\Phi(\Omk),\Phi(M))\ov{j\ppr}{\lra}\Phi(M), \]
the following diagram is commutative.
\[
\xy
(-18,6)*+{\Phi(\HH_d(\Omk,M))}="0";
(18,6)*+{\Phi(M)}="2";
(2,-6)*+{}="3";
(-18,-8)*+{\Hcal(\Phi(\Omk),\Phi(M))}="4";
{\ar^{\Phi(j)} "0";"2"};
{\ar_{\xi^{(\Omk,M)}}^{\cong} "0";"4"};
{\ar_{j\ppr} "4";"2"};
{\ar@{}|\circlearrowright "0";"3"};
\endxy
\]
\item For any $L,M,N\in\Ob(\M_d)$, the following diagram is commutative.
\[
\xy
(-40,8)*+{\Phi(\HH_d(M,N))}="0";
(0,22)*+{\Phi(\HH_d(\HH_d(L,M),\HH_d(L,N)))}="2";
(40,8)*+{\Hcal(\Phi(\HH_d(L,M)),\Phi(\HH_d(L,N)))}="4";
(-40,-8)*+{\Hcal(\Phi(M),\Phi(N))}="10";
(0,-22)*+{\Hcal(\Hcal(\Phi(L),\Phi(M)),\Hcal(\Phi(L),\Phi(M))))}="12";
(40,-8)*+{\Hcal(\Phi(\HH_d(L,M)),\Hcal(\Phi(L),\Phi(N)))}="14";
{\ar^{} "0";"2"};
{\ar^{} "2";"4"};
{\ar_{\xi^{(M,N)}} "0";"10"};
{\ar^{\Hcal(\Phi(\HH_d(L,M)),\xi^{(L,N)})} "4";"14"};
{\ar^{} "10";"12"};
{\ar_{\qquad\Hcal(\xi^{(L,M)},\Hcal(\Phi(L),\Phi(M)))} "12";"14"};
{\ar@{}|\circlearrowright "0";"14"};
\endxy
\]
\end{enumerate}

\bigskip

{\rm (1)} It suffices to show the commutativity of
\begin{equation}\label{LastA}
\xy
(-22,6)*+{\Omk(\ptg)}="0";
(22,6)*+{\M_d(M,M_{\ptg})}="2";
(-22,-6)*+{\BFk(\Phi(M),\Phi(M)_G)}="4";
(22,-6)*+{\BFk(\Phi(M),\Phi(M_{\ptg}))}="6";
{\ar^{v_{\ptg}} "0";"2"};
{\ar_{v\ppr_G} "0";"4"};
{\ar^{\Phi} "2";"6"};
{\ar^{\varpi^{(M,G)}\ci-} "6";"4"};
{\ar@{}|\circlearrowright "0";"6"};
\endxy
\end{equation}
for any $G\in\Ob(\Bcal)$. For any $A\in\Ob(\Gs)$, those $v_{\ptg}(A)$ and $v_G\ppr(A)$ are given by
\[ v_{\ptg}(A)=M_{s_{(A)}},\quad v\ppr_G(A)=\Phi(M)_A \]
where $A$ is viewed as an $G$-$e$-biset.
Thus the commutativity of $(\ref{LastA})$ follows from the commutativity of $(\ref{XYZ})$.

{\rm (2)}  It suffices to show the commutativity of
\begin{equation}\label{LastB}
\xy
(-24,6)*+{\M_d(\Omk,M_{\ptg})}="0";
(24,6)*+{M(\ptg)}="2";
(-24,-6)*+{\BFk(\Phi(\Omk),\Phi(M_{\ptg}))}="4";
(24,-6)*+{\BFk(\Phi(\Omk),\Phi(M)_G)}="6";
{\ar^{j_{\ptg}} "0";"2"};
{\ar_{\Phi} "0";"4"};
{\ar_{j\ppr_G} "6";"2"};
{\ar_{\varpi^{(M,G)}\ci-} "4";"6"};
{\ar@{}|\circlearrowright "0";"6"};
\endxy
\end{equation}
for any $G\in\Ob(\Bcal)$. For any $\varphi\in\M_d(\Omk,M_{\ptg})$ and any $\psi\in\BFk(\Phi(\Omk),\Phi(M)_G)$, those $j_{\ptg}(\varphi),j\ppr_G(\psi)\in M(\ptg)$ are given by
\[ j_{\ptg}(\varphi)=\varphi_{\pte}({\pt}),\quad j\ppr_G(\psi)=\psi_e(\pt), \]
where $\pt\in\Omk(e)$ denotes the trivial one-point set as before.
Thus the commutativity of $(\ref{LastB})$ follows from the definition of $\Phi(\varphi)$.

{\rm (3)} is also shown by the evaluation.

\end{proof}

\begin{cor}\label{Cor2}
The category of Green biset functors is equivalent to $\GreendCk$.
\end{cor}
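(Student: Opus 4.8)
The plan is to identify both categories with categories of monoid objects and then invoke Theorem \ref{ThmMonMB}. By definition a Green biset functor is a monoid object in the symmetric monoidal category $\BFk$, a morphism of Green biset functors being a morphism in $\BFk$ that is compatible with the multiplications and units. On the other side, I would first show that $\GreendCk$ is isomorphic to the category of monoid objects in the symmetric closed monoidal category $(\M_d,\otd,\Omk)$ of Proposition \ref{PropDeflMon}; Theorem \ref{ThmMonMB} then finishes the argument, since a monoidal equivalence induces an equivalence between the associated categories of monoids.

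To produce that isomorphism, I would argue as follows. Let $(\Gam,m,u)$ be a deflative Green functor. Then $\Gam\in\Ob(\M_d)$, so $\Gam\ot\Gam=\Gam\otd\Gam$ by Proposition \ref{PropTensorRestrict} and $m$ is already a morphism $\Gam\otd\Gam\to\Gam$; and by Proposition \ref{LemOmegapDefl} the unit $u\co\Obigk\to\Gam$ factors uniquely as $u=u\ppr\ci\pbf$ for a unique $u\ppr\co\Omk\to\Gam$. The associativity axiom for $(\Gam,m,u)$ is verbatim the associativity axiom for $(\Gam,m,u\ppr)$. For the unit axiom, since $u\ot\Gam=(u\ppr\ot\Gam)\ci(\pbf\ot\Gam)$ and $\ell_\Gam=\ell_\Gam\ppr\ci(\pbf\ot\Gam)$ by Corollary \ref{CorEquivDefl2}, and since $\pbf\ot\Gam$ is an epimorphism (Remark \ref{RemEpiTensor}), the relation $m\ci(u\ot\Gam)=\ell_\Gam$ holds if and only if $m\ci(u\ppr\otd\Gam)=\ell_\Gam\ppr$; the right-unit relation is treated symmetrically using $r_\Gam=r_\Gam\ppr\ci(\Gam\ot\pbf)$ from Definition \ref{Defr} and the epimorphism $\Gam\ot\pbf$. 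Hence $(\Gam,m,u\ppr)$ is a monoid in $(\M_d,\otd,\Omk)$, and running the same computation backwards shows that every such monoid $(\Gam,m,u\ppr)$ yields the deflative Green functor $(\Gam,m,u\ppr\ci\pbf)$. On morphisms, for $f\co\Gam\to\Lam$ the condition $f\ci u_\Gam=u_\Lam$ is equivalent to $f\ci u_\Gam\ppr=u_\Lam\ppr$ because $\pbf$ is epimorphic, while $f\ci m_\Gam=m_\Lam\ci(f\ot f)$ coincides with $f\ci m_\Gam=m_\Lam\ci(f\otd f)$ since $\otd$ is the restriction of $\ot$ and $\M_d$ is full in $\M$. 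Thus $\GreendCk$ and the category of monoids in $(\M_d,\otd,\Omk)$ have the same objects and the same hom-sets.

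Finally, Theorem \ref{ThmMonMB} provides a monoidal equivalence $\Phi\co(\M_d,\otd,\Omk)\ov{\simeq}{\lra}\BFk$. Transporting the multiplication and unit of a monoid along the structure isomorphisms of $\Phi$ and of a monoidal quasi-inverse yields an equivalence between the category of monoids in $(\M_d,\otd,\Omk)$ and the category of monoids in $\BFk$. Combining this with the isomorphism of the previous paragraph,
\[ \GreendCk\ \cong\ \{\,\text{monoids in }(\M_d,\otd,\Omk)\,\}\ \simeq\ \{\,\text{monoids in }\BFk\,\}, \]
and the right-hand side is by definition the category of Green biset functors.

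I expect the only genuine work to lie in the second paragraph, namely matching the Green-functor axioms — written with the unit object $\Obigk$ and the tensor $\ot$ of $\M$ — against the monoid axioms in $(\M_d,\otd,\Omk)$, whose unit is $\Omk$. That is exactly where one uses the epimorphicity of $\pbf$, $\pbf\ot M$ and $M\ot\pbf$ together with the relations $\ell_\Gam\ppr\ci(\pbf\ot\Gam)=\ell_\Gam$ and $r_\Gam\ppr\ci(\Gam\ot\pbf)=r_\Gam$; everything else is formal.
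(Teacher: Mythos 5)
Your proposal is correct and follows the same route as the paper: identify deflative Green functors with monoid objects in $(\M_d,\otd,\Omk)$ and then apply the monoidal equivalence of Theorem \ref{ThmMonMB}. The paper states this in two sentences, leaving the dictionary between the two sets of axioms implicit (citing Proposition \ref{PropDeflMon}); your second paragraph simply makes explicit the translation of the unit via $u=u\ppr\ci\pbf$ and the epimorphicity arguments, which is exactly the intended content.
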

\begin{proof}
By Proposition \ref{PropDeflMon}, a deflative Green functor is nothing but a monoid object in the monoidal category $\M_d$. Thus this immediately follows from Theorem \ref{ThmMonMB}, since a monoidal equivalence induces an equivalence between the categories of monoids.
\end{proof}

\section*{Acknowledgement}
This article has been written when the author was staying at LAMFA, l'Universit\'{e} de Picardie-Jules Verne, by the support of JSPS Postdoctoral Fellowships for Research Abroad. He wishes to thank the hospitality of Professor Serge Bouc, Professor Radu Stancu and the members of LAMFA.

\end{document}